\patchcmd{\section}{\scshape}{\bfseries}{}{}
\patchcmd{\section}{\normalfont}{\normalfont\color{MidnightBlue}}{}{}
\patchcmd{\subsection}{\normalfont}{\normalfont\color{MidnightBlue}}{}{}
\patchcmd{\subsubsection}{\normalfont}{\normalfont\color{MidnightBlue}}{}{}
\def\[{\begin{equation} }
\def\]{\end{equation} }
\newtheorem{theorem}{Theorem}[section] \crefname{theorem}{theorem}{theorems}
\newtheorem{remark}{Remark}[section] \crefname{remark}{remark}{remarks}
\newtheorem{lemma}{Lemma}[section] \crefname{lemma}{lemma}{lemmas}
\newtheorem{assumption}{Assumption}[section] \crefname{assumption}{assumption}{assumptions}
\newtheorem{proposition}{Proposition}[section] \crefname{proposition}{proposition}{propositions}
\newtheorem{definition}{Definition}[section] \crefname{definition}{definition}{definitions}
 \crefname{corollary}{corollary}{corollaries}
 \crefname{example}{example}{examples}
 \crefname{problem}{problem}{problems}
 \crefname{property}{property}{properties}
\newcommand{\R}{\mathbb{R}}
\newcommand{\dif}{\mathrm{d}}
\newcommand{\mH}{\mathcal H}
\newcommand{\mX}{\mathcal X}
\newcommand{\mZ}{\mathcal Z}
\newcommand{\wh}[1]{\widehat{#1}}
\DeclareMathOperator*{\argmin}{arg\,min}
\begin{document}

\definecolor{codegreen}{rgb}{0,0.6,0}
\definecolor{codegray}{rgb}{0.5,0.5,0.5}
\definecolor{codepurple}{rgb}{0.58,0,0.82}
\definecolor{backcolour}{rgb}{0.92,0.92,0.92}

\lstdefinestyle{mystyle}{
    backgroundcolor=\color{backcolour},   
    commentstyle=\color{codegreen},
    keywordstyle=\color{magenta},
    numberstyle=\tiny\color{codegray},
    stringstyle=\color{codepurple},
    basicstyle=\ttfamily\footnotesize,
    breakatwhitespace=false,         
    breaklines=true,                 
    captionpos=b,                    
    keepspaces=true,                 
    numbers=left,                    
    numbersep=5pt,                  
    showspaces=false,                
    showstringspaces=false,
    showtabs=false,                  
    tabsize=2
}

\lstset{style=mystyle}

\title[Data-efficient Kernel Methods for Learning Hamiltonian Systems]{Data-efficient Kernel Methods for Learning Hamiltonian Systems}

 \author[Y. Jalalian]{Yasamin Jalalian$^1$}
 \address{$^1$ Department of Computing and Mathematical Sciences, Caltech, CA, USA.}
 \email{ yjalalia@caltech.edu}

 \author[M. Samir]{Mostafa Samir$^2$}
 \address{$^2$ Beyond Limits, CA, USA.}
 \email{mibrahim@beyond.ai}

\author[B. Hamzi]{Boumediene Hamzi$^{1,}$$^{3,}$$^5$}
 \address{$^3$ The Alan Turing Institute, London, UK.}
  \address{$^5$ Parts of this work were carried out while the author was a resident scholar at the Isaac Newton Institute for Mathematical Sciences, Cambridge, UK.}
 \email{boumediene.hamzi@gmail.com}

 \author[P. Tavallali]{Peyman Tavallali$^4$}
 \address{$^4$JPL, Caltech, CA, USA. }
 \email{ tavallali@gmail.com}

 \author[H. Owhadi]{Houman Owhadi$^1$}
 \email{owhadi@caltech.edu}

\begin{abstract}
Hamiltonian dynamics describe a wide range of physical systems. As such, data-driven simulations of Hamiltonian systems are important for many scientific and engineering problems. In this work, we propose kernel-based methods for identifying and forecasting Hamiltonian systems directly from data. We present two approaches: a two-step method that reconstructs trajectories before learning the Hamiltonian, and a one-step method that jointly infers both. Across several benchmark systems, including mass-spring dynamics, a nonlinear pendulum, and the Hénon-Heiles system, we demonstrate that our framework achieves accurate, data-efficient predictions and outperforms two-step kernel-based baselines, particularly in scarce-data regimes, while preserving the conservation properties of Hamiltonian dynamics. Moreover, our methodology provides theoretical \textit{a priori} error estimates, ensuring reliability of the learned models. We also provide a more general, problem-agnostic numerical framework \footref{fn:code} that goes beyond Hamiltonian systems and can be used for data-driven learning of arbitrary dynamical systems.
\end{abstract}

\maketitle

\section{Introduction}\label{problem_statement}

Dynamical systems play a fundamental role in describing natural phenomena. Two essential aspects of their study are (i) \textit{system identification}, which involves determining parametric or nonparametric models for the system's evolution, and (ii) \textit{prediction or forecasting}, which focuses on projecting the system's state forward in time. With the advent of machine learning and its growing role in inferring data through mathematical models, known as physics-informed machine learning, data-driven methods have become increasingly important for treating both tasks  \cite{Sindy,  kantz97,CASDAGLI1989, yk1, yk2, yk3, yk4, survey_kf_ann,jaideep1,nielsen2019practical,abarbanel2012analysis, gruber2023reversible, sharma2025structurepreservingliftlearn}. 

Among the various learning approaches, reproducing kernel Hilbert spaces (RKHS) \cite{CuckerandSmale, kanagawa2018gaussian} have established solid mathematical foundations for the analysis of dynamical systems \cite{ALEXANDER2020132520,bh12,
bh17,hb17,bhks,hou2024propagating,kernel_sos,lee2025kernel,hamzi2025kernel_lions,lee2024note,klus2020data,bh2020b,hamzi2019kernel,lyap_bh,haasdonk2018greedy, bhcm1, boumedienehamzi2022note,yk1, 5706920,mmd_kernels_bh}, surrogate modeling \cite{santinhaasdonk19}, and neural networks \cite{smirnov2022mean}.
Compared to other learning approaches, kernel-based methods offer advantages in interpretability, theoretical analysis, numerical implementation and regularization, and uncertainty quantification (UQ), while also providing guaranteed convergence along with \textit{a priori} error estimates \cite{jalalian2025dataefficientkernelmethodslearning, batlle2023erroranalysiskernelgpmethods, chen2021solving, long2023kernelapproachpdediscovery, cole_hopf_poincare, houman_cgc}.

A particularly interesting class of dynamical systems is Hamiltonian dynamics. These systems naturally arise in describing the evolution of a wide range of physical systems. They are especially important as their structure preserves fundamental quantities like energy and follows specific geometric principles. The problem of learning Hamiltonian dynamical systems from data has been treated recently using different approaches \cite{Bertalan2019Onlearning,brudigam2021structure, celledoni2022learning, Chen2020symplectic, Heinonen2018learning, Liu2022machine, Valperga2022Learning}, including neural networks designed to preserve symplectic structure \cite{chen2021neuralsymplectic, Chen2020symplectic, jin2020sympnetsintrinsicstructurepreservingsymplectic,greydanus2019symplectic, valperga2022reversible, varghese2024sympgnnssymplecticgraphneural}, Gaussian processes combined with variational integrators for structure-preserving modeling \cite{valperga2021structure}, and methods for discovering governing equations and coordinates from data \cite{klein2022data, celledoni2022learning}.

More concretely, consider the Hamiltonian system 
\begin{align*}
\begin{cases}
    \dot q= \partial_p H,\\
    \dot p=-\partial_q H,
\end{cases}
\end{align*}
where $q(t) \in \R^{m}$ and $p(t) \in \R^{m}$ represent the generalized position and the generalized momentum at time $t$, respectively, and $H(q,p)$ is the Hamiltonian.
Given discretized time series data on the interval $[0, T_{\mathrm{final}}]$ for trajectories $q$ and $p$ and knowing that they come from an unknown Hamiltonian system, the problem of system identification corresponds to identifying $q,p$ and $H$ over the entire domain $[0, T_{\mathrm{final}}]$, whereas the problem of forecasting corresponds to predicting values of $q$ and $p$ after time $T_{\mathrm{final}}$ using the identified Hamiltonian. 

In this work, we address both of these problems by using a similar methodology to the one of Kernel Equation Learning (KEqL) \cite{jalalian2025dataefficientkernelmethodslearning} which addresses problems in PDE learning. 
In particular, we propose two kernel-based methods, one that first aims at learning trajectories $q,p$ and then learns Hamiltonian $H$ and another method that simultaneously learns $q,p$ and $H$.
A visual scheme of the architecture is given in figure \ref{fig:comp_graph}. Inspired by the Computational Graph Completion (CGC) framework \cite{houman_cgc}, we model our problem as a computational graph problem by representing state variables as vertices, and  functional dependencies between variables as  directed edges. We represent unknown functions as red edges, and  data as dotted edges. Then solving our problem is equivalent to completing the graph by approximating the unknown functions with the minimizers of optimal recovery problems over suitably-chosen reproducing kernel Hilbert spaces (RKHSs). This methodology is equivalent to replacing the unknown functions of the graph with Gaussian Processes and computing their MAP estimators given constraints imposed by the data and the structure of the graph. The RKHS perspective inherits the convergence and error estimate guarantees for kernel methods. Moreover, the GP perspective allows us to access the whole posterior distribution of point-wise approximations and employ them for UQ estimates. 

\begin{figure}
    \centering
    \includegraphics[width=0.51\linewidth]{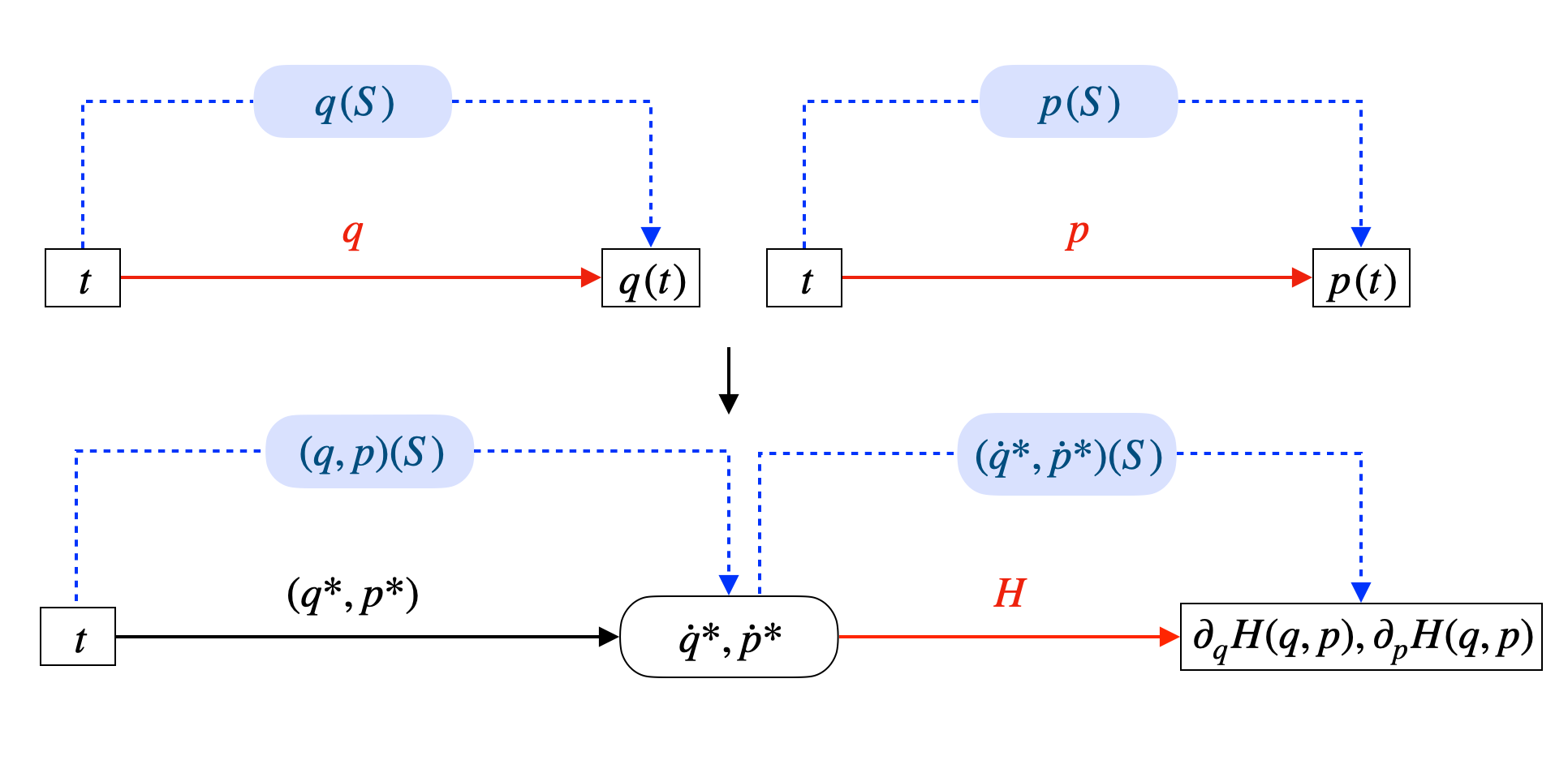}
    \includegraphics[width=0.45\linewidth]{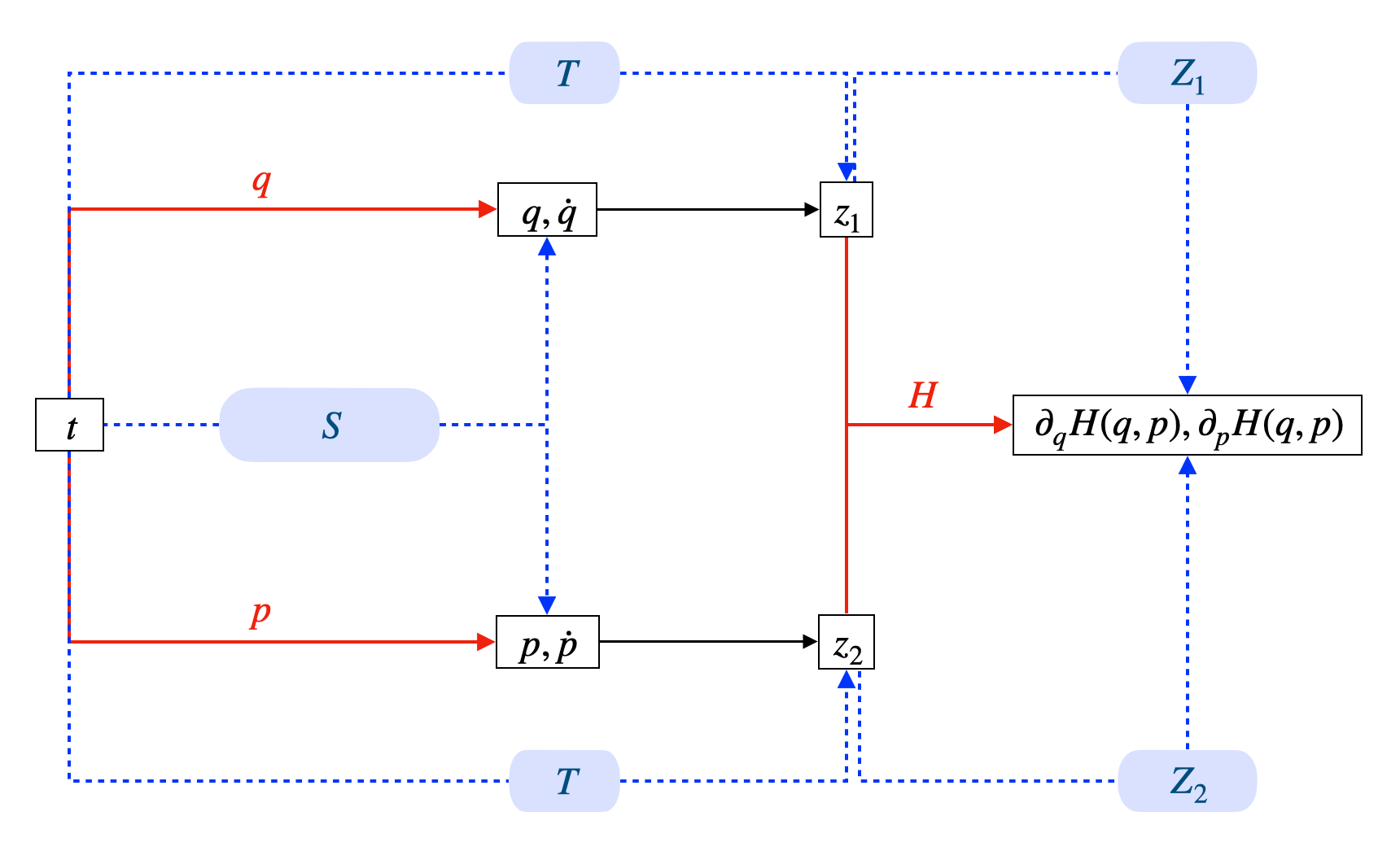}
    \caption{Computational graph of the 2-step method (left) - Computational graph of the 1-step method (right).}
    \label{fig:comp_graph}
\end{figure}

We demonstrate the efficiency of our simultaneous learning method compared to the two-step kernel-based approach, particularly in a scarce-data regime, across various physical problems. Our proposed framework is mathematically interpretable, and the architecture inherently results in a system with conserved energy. Additionally, we provide theoretical error bounds to assess the reliability of our predictions. Moreover, the numerical framework \footnote{The framework can be found at: \href{https://github.com/Mostafa-Samir/CGC}{https://github.com/Mostafa-Samir/CGC}.\label{fn:code}}  that we provide with this work is a problem-agnostic general CGC implementation that goes beyond solving the problem at hand and can be used for inferring unknown variables in any arbitrary physical system from data.

The paper is outlined as follows. In section 2, we begin by introducing the problem formally and proceed to describing the two suggested methods for its solution. In section 3, we prove \textit{a priori} error rates for the 1-step methods. Section 4 is dedicated to discussion on the details of the implementation for both methods. In section 5, we demonstrate numerical results on several benchmark systems and discuss how they compare with each other in different data regimes. Finally, section 6 is dedicated to concluding remarks and future directions. A preliminary version of this paper was presented at  \emph{Differential Equations for Data Science 2023 (DEDS2023)} 
(\href{https://www.youtube.com/watch?v=Tj1j738HQy8}{video}, 
\href{https://scheme.hn/deds2023/}{conference site}).

\section{Problem Statement \& Methodology}

\subsection{Problem Statement}

We start by formally introducing the problem and setting up notation. Let $H: \mathbb{R}^{2m} \to \mathbb{R}$ be an unknown Hamiltonian  defining a dynamical system via the  equations
\begin{align*}
\begin{cases}
    \dot q = \partial_p H(q,p), \\
    \dot p = -\partial_q H(q,p),
\end{cases}
\end{align*}
where $q: \mathbb{R} \to \mathbb{R}^m, \, p: \mathbb{R} \to \mathbb{R}^m,$ denote the generalized position and momentum respectively. At time $t \in \mathbb{R}_+$, we define the state of the full system as
\begin{align*}
    y(t) := \big(q(t), p(t)\big)^\top \in \mathbb{R}^{2m},
\end{align*}
so that its evolution can be compactly written as
\begin{align*}
    \frac{\dif y}{\dif t} = 
    \begin{pmatrix}
        \partial_p H(q(t),p(t)) \\
        -\partial_q H(q(t),p(t))
    \end{pmatrix}.
\end{align*}
Let $T_{\mathrm{col}} \coloneqq \{t_i\}_{i=1}^N$ be a discretization of the interval $[0, T_{\mathrm{final}}]$, and let $\{y(t_i)\}_{i=1}^N$ constitute our full set of \textit{collocation points}.
To simulate partial observability within the collocation points, we introduce a \emph{sparsity factor} $\alpha \in (0,1]$. This determines the proportion of collocation points that are \emph{not} observed. Specifically, we define
\begin{align*}
    N_{\mathrm{obs}} := \lfloor (1-\alpha) N \rfloor,
\end{align*}
and uniformly at random select $N_{\mathrm{obs}}$ of the $N$ time points.
We define 
\begin{align*}
    S_{\mathrm{obs}} \coloneqq \{s_j\}_{j=1}^{N_{\mathrm{obs}}} \subset T_{\mathrm{col}}
\end{align*}
and furthermore the set $\{y(s_j)| s_j \in S_{\mathrm{obs}}\}$ as our \textit{observation points}. 

For an example system, in Figure \ref{fig:phasespace:m2s3}, we plot the phase-space trajectories for each mass component, showing both the full trajectories on the collocation points (in black) and the observed points (in red). 
\begin{figure}[htbp]
    \centering
    \begin{subfigure}{0.47\textwidth}
    \includegraphics[width=\linewidth]{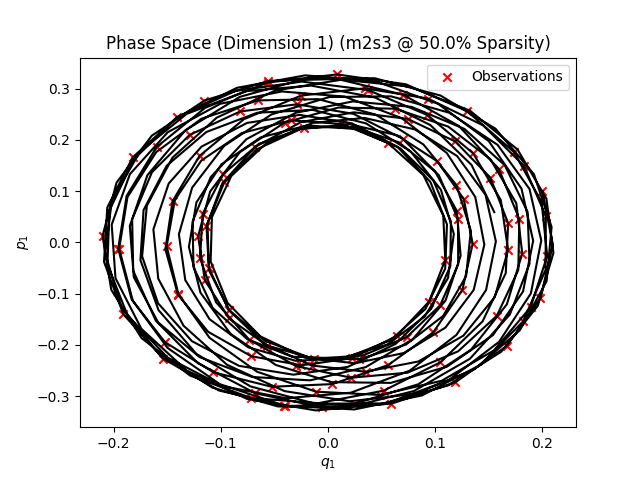}
    \end{subfigure}
    \begin{subfigure}{0.47\textwidth}
    \includegraphics[width=\linewidth]{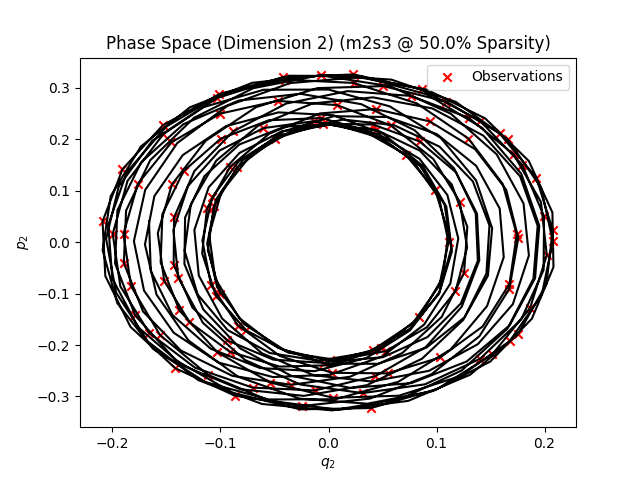}
    \end{subfigure}
    \caption{Phase space trajectories of the two-mass-three-spring system with sparsity factor $0.5$.}
    \label{fig:phasespace:m2s3}
\end{figure}

Our goal is to recover $q,p,H$ from these observation and collocation points and to forecast $q,p$ in time. Here we focus on one-shot learning and consider a setting where data comes from the partial observation of a single trajectory. We remark however that our proposed methodology can be easily extended to a setting where data is obtained from multiple trajectories.

\subsection{The Proposed Method}\label{sec:method}

In the following, we outline our proposed methodology for learning and forecasting the Hamiltonian system based on the theory of reproducing kernel Hilbert spaces, closely following the methodology introduced in \cite{jalalian2025dataefficientkernelmethodslearning} for equation learning. To begin, we represent the kernels we are going to use throughout the paper for approximating $q,p,H$.
Let $\Gamma: \mathbb{R} \times \mathbb{R} \to \mathbb{R}$ and $\Sigma: \mathbb{R} \times \mathbb{R} \to \mathbb{R}$ be positive-definite kernels with respective RKHS(s) $\mH_{\Gamma}$ and $\mH_{\Sigma},$ and let $\Psi: \mathbb{R}^{2m} \times \mathbb{R}^{2m} \to \mathbb{R}$ be a positive-definite kernel with RKHS $\mH_{\Psi}$.
For precise definitions of the kernels used in numerical experiments, see subsection \ref{sec:kernels}.

\subsubsection{2-step method: first learn $q,p$, then learn $H$}\label{twostep_method}

The first method we study is the classic kernel interpolation method. At the first step, we interpolate the functions $q$ and $p$. 
Formally, we denote the observation vector and the collocation vector as
\begin{alignat*}{2}
S &\coloneqq \big(s_1, \dots, s_{N_{\mathrm{obs}}}\big)^\top \in \R^{N_{\mathrm{obs}}},
  &\quad& \text{for } s_j \in S_{\mathrm{obs}}, \\
T &\coloneqq \big(t_1, \dots, t_N\big)^\top \in \R^{N},
  &\quad& \text{for } t_i \in T_{\mathrm{col}},
\end{alignat*}
and we approximate each function $q$ and $p$ by finding the interpolant with the minimum RKHS norm. That is, we solve
\begin{align}\label{q-kernel-interpolant}
    q^\star \coloneqq \argmin_{\wh q \, \in\, \mH_{\Gamma}} \quad \big\| \wh q \big\|_{\Gamma} \quad \text{subject to} \quad \wh q(S) = q(S),
\end{align}
and
\begin{align}\label{p-kernel-interpolant}
    p^\star \coloneqq \argmin_{\wh p \,\in\, \mH_{\Sigma}} \quad \big\| \wh p \big\|_{\Sigma} \quad \text{subject to} \quad \wh p(S) = p(S).
\end{align}
At the second step, using the interpolants of the first step, we find the best approximation for the function $H$. In particular, we explicitly compute the derivatives of the approximants $q^\star, p^\star$ from the first step, and we solve 
\begin{align}\label{H-kernel-interpolant}
    H^\star \coloneqq \argmin_{\wh H\,\in \,\mH_{\Psi}} \quad \|\wh H\|^2_{\Psi} + 
    \frac{1}{\lambda_1} \|\partial_p \wh H ((q,p) (T)) -\dot q^\star(T)\|_2^2 + 
    \frac{1}{\lambda_2} \|\partial_q \wh H((q,p)(T))+\dot p^\star(T)\|_2^2,
\end{align}
where $\lambda_1, \lambda_2 > 0$ are regularization parameters.
It is well known that the optimization problems above are well-posed and admit unique solutions $q^\star,p^\star,H^\star$; more details are given in section \ref{sec:implementation}.

\subsubsection{1-step method: simultaneously learn $q,p$ and $H$}\label{onestep_method}
In the second method, we estimate $q,p,H$ by a single joint optimization problem, enforcing the constraint that $(q,p)$ solve a system of ODEs defined by $H$. For ease of notation, we keep the same notation used for the 2-step method but make explicit which method is being used.
We solve
\begin{equation}\label{one-step-optimal-recovery}
\begin{alignedat}{3}
    (q^\star, p^\star, H^\star) \coloneqq \argmin_{\wh q \,\in\, \mH_\Gamma,\,\wh p \,\in\, \mH_\Sigma,\,\wh H \, \in \,\mH_\Psi}\, \,\, &\|\wh H\|_{\Psi}^2 + \frac{1}{\lambda_1} \|\wh q\|_{\Gamma}^2 + \frac{1}{\lambda_2} \|\wh p\|_{\Sigma}^2
    \\
    \quad \quad &\text{subject to} \quad
    \begin{cases}
        \wh q(S) = q(S),\\
        \wh p(S) = p(S),\\
        \dot {\wh q}(T) = \partial_p \wh H((q,p)(T)),\\
        \dot {\wh p}(T) = - \partial_q \wh H((q,p)(T)),
    \end{cases}
\end{alignedat}
\end{equation}
where the first two constraints are the observation constraints and the last two are the constraints enforcing the physics of the problem on collocation points. 
The optimization problem \ref{one-step-optimal-recovery} is well-posed and  section \ref{sec:implementation} shows how we compute its minimizers.

\subsection{Comparison of the 2-step and 1-step methods}
In the 2-step method, we discard the physics constraints in the first step. Then, in the second step, we compute derivatives of the approximants from the first step to create values for the optimization problem of the second step. In particular, in a scarce-data regime, the derivative of the approximant may not provide an accurate approximation for the derivative of the true unknown functions. 
However, the 1-step method surmounts these obstacles by taking into account all information from data and the physics of the problem simultaneously. Therefore, from a methodological perspective, the 1-step method is more advantageous to use in a scarce-data regime. This argument is numerically validated on various examples in section \ref{sec:numerics}.

\section{Theoretical Analysis}\label{sec:theory}
In this section, we present \textit{a priori} error bounds for the 1-step method introduced in section~\ref{onestep_method}. These bounds quantify the approximation error of the learned trajectories and Hamiltonian in terms of the smoothness of the true functions and the fill distance of the sampled collocation points. The analysis builds on classical results in the approximation theory of RKHS(s) \cite{wendland2005approximate, wendland2004scattered, narcowich2005sobolev, fuselier2012scattered}, and adapts them to the context of Hamiltonian systems with coupled derivative constraints. For an expanded treatment of both 1-step and 2-step convergence theory for a PDE learning setup, see \cite[Supplementary Information B]{jalalian2025dataefficientkernelmethodslearning}. We will furthermore refer to some lemmas and theorems proved in \cite{jalalian2025dataefficientkernelmethodslearning}; for convenience, we have restated them in appendix \ref{app:theory}. 

For this section, we rewrite the optimization problem \ref{one-step-optimal-recovery} in the following equivalent form
\begin{equation}\label{one-step-optimal-recovery-theory}
\begin{alignedat}{3}
    (y_N^\star, H_N^\star) \coloneqq &\argmin_{\wh y,\wh H} \|\wh H\|_{\Psi}^2 + \frac{1}{\lambda_1} \|\wh y_1\|_{\Gamma}^2 + \frac{1}{\lambda_2} \|\wh y_2\|_{\Sigma}^2
    \\
    &\quad \quad \text{subject to} \quad
    \begin{cases}
        \wh y(S) = y(S),\\        \dot{\widehat y}(T) = J \nabla \widehat H\!\big(y(T)\big),
\quad
J = \begin{pmatrix} 0 & I \\ -I & 0 \end{pmatrix}.
    \end{cases}
\end{alignedat}
\end{equation}
Note that, in contrast to the previous notation, in this section, we have made the dependence of the estimates on $N$ explicit by referring to them as $Y^\star_N, H^\star_N$.
Our goal here is to get \textit{a priori} error rates for these reconstructed functions $y_N^\star, H_N^\star$. Notice however that the equality constraints of optimization problem \ref{one-step-optimal-recovery-theory} are only on $\nabla H$ and not directly on $H$. Thus, we can only get rates for the recovery of $\nabla H$ which is equivalent to the recovery of $H$ up to a constant.

To establish our error analysis, we require certain regularity conditions and fill-distance assumptions for $q,p,$ and $H$. We state these assumptions below. Without loss of generality, we assume that $\lambda_1, \lambda_2$, the weights in the optimal recovery problem \ref{one-step-optimal-recovery}, are both equal to one as they only affect the constant and not the rates.

\begin{assumption}[\textbf{Assumptions on $q,p$}] \label{assumption_qp} Assume that the following holds:
\begin{enumerate}
\renewcommand{\labelenumi}{(\roman{enumi})}
    \item $\Gamma, \Sigma: \mathbb{R} \times \mathbb{R} \to \mathbb{R}$ are continuous, positive-definite kernels with associated RKHS(s) $\mathcal{H}_\Gamma, \mathcal{H}_\Sigma$.
    \item $\mH_\Gamma$ is continuously embedded in the Sobolev space $ H^\gamma([0, T_{\mathrm{final}}]; \mathbb{R}^m)$ for some $\gamma > 0$ and $q \in H^\gamma ([0,T_{\mathrm{final}}];\R^m)$;
    \item $\mH_\Sigma$ is continuously embedded in the Sobolev space $ H^\sigma([0, T_{\mathrm{final}}]; \mathbb{R}^m)$ for some $\sigma > 0$ and $p \in H^\sigma ([0,T_{\mathrm{final}}];\R^m)$;
    \item the observation times $S_{\mathrm{obs}} \subset [0, T_{\mathrm{final}}]$ are sampled with fill distance $\rho_N$ defined as
    \begin{align*}
        \rho_{N} \coloneqq \sup_{t \in [0, T_{\mathrm{final}}]} \inf_{t' \in S_{\mathrm{obs}}}\, | t - t' |.
    \end{align*}
\end{enumerate}
\end{assumption}

\begin{assumption}[\textbf{Assumptions on $H$}]  \label{assumption_H} Assume that the following holds:
\begin{enumerate}
\renewcommand{\labelenumi}{(\roman{enumi})}
    \item $\Psi: \mathbb{R}^{2m} \times \mathbb{R}^{2m} \to \mathbb{R}$ is a continuous, positive-definite kernel with associated RKHS $\mathcal{H}_\Psi$.
    \item $\mH_\Psi$ is continuously embedded in the Sobolev space $H^\eta(\Omega; \mathbb{R})$ for some $\eta > m+1$ and some $\Omega \subset \mathbb{R}^{2m}$ a compact domain containing the collocation trajectory image $ \mZ \coloneqq\{ y(t) : t \in T_{\mathrm{col}} \}$,  and $H \in H^\eta(\Omega; \mathbb{R})$; 
    
    \item the collocation trajectory image points $\mZ$ are sampled with fill distance $\varrho_N$ defined as
    \begin{align*}
        \varrho_{N} \coloneqq \sup_{x \in \Omega} \, \inf_{x' \in \mZ}\, \| x - x' \|_2.
    \end{align*}
\end{enumerate}
\end{assumption}

\begin{theorem}[\textbf{\textit{A priori} error bounds for the 1-step method}]\label{thm:rates}
Assume that assumptions \ref{assumption_qp} and \ref{assumption_H} hold. Let the functions $q^\star_N \in \mathcal{H}_\Gamma, \, p^\star_N \in \mathcal{H}_\Sigma$, and $H^\star_N \in \mathcal{H}_\Psi$ be the minimizers of the 1-step optimization problem \ref{one-step-optimal-recovery-theory}.
Then, there exist constants $\rho_0, \varrho_0 \in (0, 1)$ so that whenever $\rho_N < \rho_0$ and $\varrho_N < \varrho_0$ it holds that
\begin{enumerate}
\renewcommand{\labelenumi}{(\alph{enumi})}
    \item if $q \in \mH_\Gamma$ and $p \in \mH_\Sigma$, then for any smoothness indices $0 \leq \gamma'<\gamma$, $0 \leq \sigma'<\sigma$, there exist constants $C_q, C_p > 0$ such that 
    \begin{align*}
        \|q^\star_N - q\|_{H^{\gamma'}}^2 &\leq C_q \, \rho_N^{2(\gamma - \gamma')} \left( \|H \|_{\Psi}^2 + \| q\|_{\Gamma}^2 +  \|p\|_{\Sigma}^2  \right), \\
        \|p^\star_N - p\|_{H^{\sigma'}}^2 &\leq C_p \, \rho_N^{2(\sigma - \sigma')} \left( \|H \|_{\Psi}^2 + \| q\|_{\Gamma}^2 +  \|p\|_{\Sigma}^2  \right);
    \end{align*}
    \item if $q \in \mH_\Gamma^2$, $p \in \mH_\Sigma^2$, and $\sigma, \gamma>3/2$, then for any smoothness indices $0 \leq \eta' < \eta, \, 3/2< \gamma'<\gamma,\, 3/2<  \sigma'<\sigma$, there exists a constant $C_H > 0$ such that 
\begin{align*}
\| \nabla H - \nabla H^\star_N \|_{L^\infty(\Omega)}^2
\le C_H \left(\varrho_N^{2(\eta - 1- m)} + \rho_N^{2(\gamma -\gamma')} + \rho_N^{2(\sigma -\sigma')}\right) \left( \|H\|_\Psi^2 + \| q \|_{\Gamma^2}^2 + \| p \|_{\Sigma^2}^2 \right),
\end{align*}
where the nested RKHSs $\mH_\Gamma^2, \mH_\Sigma^2$ and their corresponding norms are defined in appendix \ref{app:theory}.
\end{enumerate}
\end{theorem}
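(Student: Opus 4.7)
The plan is to combine a feasibility argument for the energy with two Sobolev sampling inequalities---one in the time variable on $[0,T_{\mathrm{final}}]$ and one in the phase-space variable on $\Omega$---to convert the observation and collocation residuals into function-norm error estimates.

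The starting point is that the true triple $(q,p,H)$ is itself feasible for the optimization problem \ref{one-step-optimal-recovery-theory}: the observation constraint $y(S)=y(S)$ is trivial, and the physics constraint $\dot y(T)=J\nabla H(y(T))$ is exactly Hamilton's equation at the collocation times. Optimality therefore yields the a priori energy bound
\begin{equation*}
\|H^\star_N\|_\Psi^2+\|q^\star_N\|_\Gamma^2+\|p^\star_N\|_\Sigma^2 \;\leq\; \|H\|_\Psi^2+\|q\|_\Gamma^2+\|p\|_\Sigma^2,
\end{equation*}
and the triangle inequality then controls $\|H^\star_N-H\|_\Psi$, $\|q^\star_N-q\|_\Gamma$, and $\|p^\star_N-p\|_\Sigma$ by the same right-hand side up to a constant factor. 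For part (a), the difference $q^\star_N-q$ vanishes on $S_{\mathrm{obs}}$ and belongs to $H^\gamma$ through the embedding $\mH_\Gamma\hookrightarrow H^\gamma$. Applying the one-dimensional zero-residual Sobolev sampling inequality reviewed in appendix \ref{app:theory} gives
\begin{equation*}
\|q^\star_N-q\|_{H^{\gamma'}}^2 \;\lesssim\; \rho_N^{2(\gamma-\gamma')}\,\|q^\star_N-q\|_\Gamma^2,
\end{equation*}
and inserting the energy bound yields the claimed estimate; the case of $p$ is identical with $(\Sigma,\sigma)$ replacing $(\Gamma,\gamma)$.

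For part (b), $H$ enters the optimization only through gradient values at $\mZ := y(T_{\mathrm{col}})$, so I would first quantify the residual $\nabla(H-H^\star_N)$ on $\mZ$ and then propagate it to all of $\Omega$. The physics constraint supplies the identity
\begin{equation*}
\nabla(H^\star_N-H)(y(t_i)) \;=\; J^{-1}\bigl(\dot y^\star_N(t_i)-\dot y(t_i)\bigr) \qquad \text{for every } t_i \in T_{\mathrm{col}},
\end{equation*}
which bounds the pointwise residual on $\mZ$ by $\|\dot q^\star_N-\dot q\|_{L^\infty}+\|\dot p^\star_N-\dot p\|_{L^\infty}$. The nested-RKHS hypotheses $q\in\mH_\Gamma^2$, $p\in\mH_\Sigma^2$ together with $\gamma,\sigma>3/2$ allow a part (a)-style sampling inequality at indices $\gamma',\sigma'>3/2$, after which the Sobolev embedding $H^{\gamma'-1}\hookrightarrow C^0$ upgrades the trajectory estimate to a uniform bound on the derivatives with rates $\rho_N^{\gamma-\gamma'}$ and $\rho_N^{\sigma-\sigma'}$. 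To reach all of $\Omega\subset\R^{2m}$, I would then apply a nonzero-data Sobolev sampling inequality to $\nabla(H^\star_N-H)\in H^{\eta-1}(\Omega;\R^{2m})$, valid because $\eta>m+1$ forces $H^{\eta-1}\hookrightarrow C^0(\Omega)$, giving
\begin{equation*}
\|\nabla(H^\star_N-H)\|_{L^\infty(\Omega)} \;\lesssim\; \varrho_N^{\eta-1-m}\,\|H^\star_N-H\|_\Psi\;+\;\max_{t_i}\bigl|\nabla(H^\star_N-H)(y(t_i))\bigr|.
\end{equation*}
Squaring, combining with the residual estimate on $\mZ$, and applying the energy bound produces the three scales $\varrho_N^{2(\eta-1-m)}$, $\rho_N^{2(\gamma-\gamma')}$, and $\rho_N^{2(\sigma-\sigma')}$ multiplying the functional $\|H\|_\Psi^2+\|q\|_{\Gamma^2}^2+\|p\|_{\Sigma^2}^2$.

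The main obstacle is coordinating the two sampling inequalities across different ambient dimensions and different smoothness scales, and in particular ensuring that the pointwise control of $\dot q^\star_N,\dot p^\star_N$ on $T_{\mathrm{col}}$ is compatible with the feasibility-based energy bounds. This is exactly why the theorem requires both $\gamma,\sigma>3/2$ and the strengthened assumptions $q\in\mH_\Gamma^2$, $p\in\mH_\Sigma^2$: together they supply the extra smoothness needed to pass from an $H^{\gamma'}$ estimate on $q^\star_N-q$ to a uniform estimate on $\dot q^\star_N-\dot q$ without sacrificing the polynomial decay rate in $\rho_N$ that ultimately drives the collocation residual to zero.
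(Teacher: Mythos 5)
Your proposal is correct and follows essentially the same route as the paper's proof: feasibility of the true triple $(q,p,H)$ yields the optimality (energy) bound, the noiseless Sobolev sampling inequality in time gives part (a), and part (b) is obtained by converting the collocation constraint into pointwise control of $\nabla(H-H^\star_N)$ on the trajectory (via the $C^1$ embedding for $\gamma',\sigma'>3/2$) and then invoking the noisy sampling inequality on $\Omega$. The only substantive difference is that you bound $\|H^\star_N\|_\Psi^2$ directly by dropping the nonnegative terms $\|q^\star_N\|_\Gamma^2,\|p^\star_N\|_\Sigma^2$ and appealing to the embedding $\mH_\Gamma^2\subset\mH_\Gamma$, whereas the paper estimates the defects $\|q\|_\Gamma^2-\|q^\star_N\|_\Gamma^2$ via the optimal interpolant and Lemma \ref{lem:H-gamma-bound}; your shortcut suffices for the stated theorem, and your use of the full collocation image $\mZ$ (rather than $S_{\mathrm{obs}}$) as the discrete set is in fact the choice consistent with the fill distance $\varrho_N$ of Assumption \ref{assumption_H}(iii).
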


\begin{proof}
\begin{enumerate}
\renewcommand{\labelenumi}{(\alph{enumi})}
    \item 
     Observe that $(q^\star_N-q)(S_{\mathrm{obs}}) = 0$ as $q^\star_N$ is interpolating $q$ passing through $S_{\mathrm{obs}}$ and so we can apply the noiseless Sobolev sampling inequality 
     \ref{prop:sobolev-sampling-inequality}(a). Indeed, directly applying that result followed by the continuous embedding assumption \ref{assumption_qp}(ii) we obtain that for $\rho_{N} < \rho_0$, 
\begin{align*}
    \| q^\star_N - q \|_{H^{\gamma'}} 
    \le C \, \rho_N^{\gamma - \gamma'}\, \| q^\star_N - q \|_{H^{\gamma}}
    \le \Tilde{C}\, \rho_N^{\gamma - \gamma'}\, \| q^\star_N - q \|_\Gamma.
\end{align*}
Using the triangle inequality, the identity $(a + b)^2 \le 2 (a^2 + b^2)$, and $\min a + \min b \leq \min (a+b)$ for $q^\star_N$, we can write 
\begin{equation}\label{eq:rate-q}
    \begin{aligned}
        \| q^\star_N - q \|^2_{H^{\gamma'}}
        \le 2\,\Tilde{C}^2 \, \rho_N^{2(\gamma -\gamma')}\, \left( \| q^\star_N\|_\Gamma^2 + \| q \|_\Gamma^2 \right) 
        \le C_q \, \rho_N^{2(\gamma -\gamma')}\, \left( \|H\|_\Psi^2 + \| q \|_\Gamma^2 + \| p \|_\Sigma^2 \right).
    \end{aligned}
\end{equation}
The proof for $p$ is done similarly.

\item Let us break down the proof into several parts.
\\
\textbf{Finding a uniform bound for $H_N^*$:}
For $q$, define the optimal interpolant 
\begin{align*}
    \overline{q}_N:= \argmin_{r \in \Gamma} \| r\| 
    \quad \text{s.t.} \quad
    r(S) = q(S).
\end{align*}
Representer theorem gives $\overline{q}_N = \Gamma(S, \cdot)^\top \Gamma(S, S)^{-1} q(S)$ and a direct calculation using 
the reproducing property implies that 
$\langle q - \overline{q}_N, \overline{q}_N \rangle_\Gamma =0$
which in turn gives the identity 
\begin{align*}
\| q - \overline{q}_N \|_\Gamma^2 = \| q\|_{\Gamma}^2 - \| \overline{q}_N\|_\Gamma^2.  
\end{align*}
Moreover, by $\min a + \min b \leq \min (a+b)$, we have the bound $\| \overline{q}_N \|_{\Gamma} \leq \| q^\star_N \|_\Gamma.$
Thus we obtain the lower bound
\begin{equation}\label{disp-000}
    \| q^\star_N \|_\Gamma^2 \ge \| q\|_\Gamma^2 - \| q - \overline{q}_N\|_\Gamma^2.
\end{equation}
On the other hand, thanks to the assumption that $q \in \mH_\Gamma^2$, applying lemma 
\ref{lem:H-gamma-bound} then gives the bound 
\begin{align*}
\| q - \overline{q}_N\|_{\Gamma}^2 
\le \| q - \overline{q}_N\|_{L^2([0,T_{\mathrm{final}}])} \| q\|_{\Gamma^2},  
\end{align*}
and since $(\bar q_N-q)(S_{\mathrm{obs}}) = 0$, we apply the noiseless Sobolev sampling inequality 
\ref{prop:sobolev-sampling-inequality}(a) to the bound above to further get the bound
\begin{align*}
\| q - \overline{q}_N\|_{\Gamma}^2 
\le \bar C \rho_N^\gamma \| q \|_{\Gamma} \| q\|_{\Gamma^2}.
\end{align*}
Putting \ref{disp-000} and the bound above together, and following a similar procedure for $p$, we get the bounds
\begin{equation}\label{disp-222}
\begin{aligned}
    &\| q\|_\Gamma^2 - \| q^\star_N \|_\Gamma^2 \le \bar C \rho_N^\gamma \| q \|_{\Gamma} \| q\|_{\Gamma^2},\\
    &\| p\|_\Sigma^2 - \| p^\star_N \|_\Sigma^2 \le \bar C \rho_N^\sigma \| p \|_{\Sigma} \| p\|_{\Sigma^2},
\end{aligned}
\end{equation}
where by abuse of notation, we redefine $\bar C$ as the maximum of the constants for $q$ and $p$. Finally, the optimality of $q^\star_N, p^\star_N, H^\star_N$ gives 
\begin{equation*}\label{disp-3}
    \| H^\star_N \|^2_{\Psi} \le \| H \|_\Psi^2 + 
    \big[ \| q \|_\Gamma^2 - \| q^\star_N \|_\Gamma^2 \big] + \big[ \| p \|_\Sigma^2 - \| p^\star_N \|_\Sigma^2 \big],
\end{equation*}
and putting together \ref{disp-222} with the bound above gives
\begin{align*}
\| H^\star_N \|^2_{\Psi} \le \| H \|_\Psi^2 + 
\bar C \big(\rho_N^\gamma \| q \|_\Gamma  \| q \|_{\Gamma^2} + \rho_N^\sigma \| p \|_\Sigma  \| p \|_{\Sigma^2}\big),
\end{align*}
which by using the embeddings 
$\mH_\Sigma^2 \subset \mH_\Sigma, \mH_\Gamma^2 \subset \mH_\Gamma$, the assumptions that $\rho_N < 1$ and $\gamma, \sigma >0$, and bounding all the constants by their maximum value and denoting it as $\bar C_{\mathrm{emb}}$ further gives the bound
\begin{equation}\label{disp-5}
\| H^\star_N \|^2_{\Psi} \le \bar C_{\mathrm{emb}} \left( \| H \|_\Psi^2 + 
\| q \|_{\Gamma^2}^2 + \| p \|_{\Sigma^2}^2\right). 
\end{equation}
Thus, we have an \textit{a priori} bound on the RKHS norm of the reconstruction $H_N^\star$ that depends only on the true functions and fill distance.
\\
\textbf{Controlling the error between $\nabla H$ and $\nabla H^\star_N$:}
Fix \(s\in S_{\mathrm{obs}}\). 
From equality constraints in \ref{one-step-optimal-recovery-theory}, and from the Hamiltonian equations for true dynamics, we know that 
\begin{align*}
\dot y(s)=J\nabla H(y(s)),\qquad 
\dot y_N^\star(s)=J\nabla H_N^\star\big(y(s)\big), 
\end{align*}
where $\|J\|_2=1$, so
\begin{equation}\label{disp-12}
\begin{aligned}
\big\|\nabla H(y(s))-\nabla H_N^\star(y(s))\big\|_2
\le\; \big\|\dot y(s)- \dot y_N^\star(s)  \big \|_2 \le \big\|\dot y-\dot y_N^\star\big\|_{L^\infty([0,T_{\mathrm{final}}])}.
\end{aligned}
\end{equation}
Now, note that by triangle inequality on the two-norms and taking sup, we have
\begin{align}\label{disp-333}
\|\dot y - \dot y_N^\star\|_{L^\infty([0,T_{\mathrm{final}}])}
\;\le\;
\|\dot q - \dot q_N^\star\|_{L^\infty([0,T_{\mathrm{final}}])}
\;+\;
\|\dot p - \dot p_N^\star\|_{L^\infty([0,T_{\mathrm{final}}])}.
\end{align}
For $\gamma',\sigma'>3/2$, by Sobolev embedding theorem (with $\alpha, d = 1$ in theorem \ref{prop:sobolev-embedding}),
\begin{align*}
H^{\gamma'}([0,T_{\mathrm{final}}]), H^{\sigma'}([0,T_{\mathrm{final}}])\hookrightarrow C^1([0,T_{\mathrm{final}}]).
\end{align*}
In particular, there exist $C_{\mathrm{emb,q}},C_{\mathrm{emb,p}}>0$ such that
\begin{equation}\label{eq:embed-Hr-C1}
\begin{aligned}
\|\dot q - \dot q_N^\star\|_{L^\infty([0,T_{\mathrm{final}}])} \leq \| q -  q_N^\star\|_{C^1([0,T_{\mathrm{final}}])}\;\le\; C_{\mathrm{emb,q}}\,\| q -  q_N^\star\|_{H^{\gamma'}([0,T_{\mathrm{final}}])},\\
\|\dot p - \dot p_N^\star\|_{L^\infty([0,T_{\mathrm{final}}])}\leq \| p -  p_N^\star\|_{C^1([0,T_{\mathrm{final}}])}\;\le\; C_{\mathrm{emb,p}}\,\| p -  p_N^\star\|_{H^{\sigma'}([0,T_{\mathrm{final}}])}.
\end{aligned}
\end{equation}
Denote $C_{\mathrm{emb}} = \max (C_{\mathrm{emb,q}}, C_{\mathrm{emb,p}})$. Putting equations \ref{eq:rate-q},  \ref{disp-333} and \ref{eq:embed-Hr-C1} together and again using $(a + b)^2 \le 2 (a^2 + b^2)$, we get
\begin{align*}
    \|\dot y - \dot y_N^\star\|_{L^\infty([0,T_{\mathrm{final}}])}^2
    \le 2\, C_{\mathrm{emb}}^2
    \left( C_q \, \rho_N^{2(\gamma -\gamma')} + C_p \, \rho_N^{2(\sigma -\sigma')}\right)\, \left( \|H\|_\Psi^2 + \| q \|_\Gamma^2 + \| p \|_\Sigma^2 \right).
\end{align*}
Putting together equations \ref{disp-12} with the bound above, we get that for $s \in S_{\mathrm{obs}}$,
\begin{equation}\label{disp-17}
\begin{aligned}
\big\|(\nabla H-\nabla H_N^\star)\big(y(s)\big)\big\|_2^2  
\le 2\, C_{\mathrm{emb}}^2
\left( C_q \, \rho_N^{2(\gamma -\gamma')} + C_p \, \rho_N^{2(\sigma -\sigma')}\right)\, \left( \|H\|_\Psi^2 + \| q \|_\Gamma^2 + \| p \|_\Sigma^2 \right).
\end{aligned}
\end{equation}
We have thus shown that $H$ and $H^\star_N$ are close on the discrete set $S_{\mathrm{obs}}$.
\\
\textbf{Discrete to continuous:}
Assumption \ref{assumption_H}(ii) implies
$\nabla H \in H^{\eta-1}(\Omega;\mathbb{R}^{2m})$
and by the noisy Sobolev sampling inequality \ref{prop:sobolev-sampling-inequality}(b), under the assumption that $\varrho_N \le \varrho_0$, we get  
\begin{equation}\label{disp-19}
\| \nabla H - \nabla H^\star_N \|_{L^\infty(\Omega)} 
\le C^\star \varrho_N^{\eta - 1- m} 
\|\nabla H - \nabla H^\star_N \|_{H^{\eta-1}(\Omega)} + 2 \| (\nabla H - \nabla H^\star_N ) |_{S_{\mathrm{obs}}} \|_{\infty}.
\end{equation}
Using the fact that
$\nabla:\; H^{\eta}(\Omega) \to H^{\eta-1}(\Omega;\,\R^{2m})$ is a bounded linear operator, 
the continuous embedding assumption \ref{assumption_H}(ii) and triangle inequality, we can further write 
\begin{align*}
\| \nabla H - \nabla H^\star_N \|_{H^{\eta-1}(\Omega)} 
\le C_{\Omega,\eta} \| H - H^\star_N \|_{H^\eta(\Omega)} 
\le \widehat C \left( \| H \|_\Psi + \| H^\star_N \|_\Psi \right),
\end{align*}
which together with \ref{disp-5} and $(a+b)^2 < 2 (a^2 + b^2)$ implies
\begin{equation}\label{disp-20}
\begin{aligned}
\| \nabla H - \nabla H^\star_N \|_{H^{\eta-1}(\Omega)}^2
\le \Tilde{C}_H \left( \| H \|_\Psi^2 + 
\| q \|_{\Gamma^2}^2 + \| p \|_{\Sigma^2}^2\right).
\end{aligned}
\end{equation}
Putting \ref{disp-17} and \ref{disp-20} into \ref{disp-19}, inequality $(a+b)^2 \leq 2(a^2+b^2)$, and using the embeddings $\mH_\Sigma^2 \subset \mH_\Sigma, \mH_\Gamma^2 \subset \mH_\Gamma$, results in
\begin{align*}
\| \nabla H - \nabla H^\star_N \|_{L^\infty(\Omega)}^2
\le C_H \left(\varrho_N^{2(\eta - 1- m)} + \rho_N^{2(\gamma -\gamma')} + \rho_N^{2(\sigma -\sigma')}\right) \left( \|H\|_\Psi^2 + \| q \|_{\Gamma^2}^2 + \| p \|_{\Sigma^2}^2 \right).
\end{align*}
\end{enumerate}
\end{proof}

\begin{remark}
We can define
$\|H\|_{\Psi,0} \coloneqq \min_{c \, \in \, \mathbb{R}} \|H-c\|_{\Psi}.$
Since adding a constant does not affect the gradient, i.e.\ 
\(\nabla (H-c) = \nabla H\), all of the theoretical steps in the proof remain valid if we replace \(\|H\|_{\Psi}\) by \(\|H\|_{\Psi,0}\). 
In particular, working with \(\|H\|_{\Psi,0}\) yields a tighter bound.
\end{remark}

\section{Implementation}\label{sec:implementation}

In this section, we expand the methodology introduced in section \ref{sec:method}. For the 2-step method, we present the closed form solutions of the optimization problems presented in \ref{sec:method} and, for the 1-step method, we explain in detail the reduction of the optimization problem and the numerical optimization algorithm implemented for its solution. Then, we discuss kernel and hyperparameter choices and end the discussion with remarks on the energy conservation properties of our methods.

\subsection{Kernels acting on linear functionals}\label{sec:kernels-on-functionals}

Let $\mX$ be any set. Let $\mH$ be the RKHS associated to the kernel $K:\mX \times \mX \to \R$. Then $\mH^\star$ denotes the dual space of $\mH$, i.e. it is the space of all bounded linear functionals $\phi: \mH \to \R$.
For any $x\in\mX$, write the kernel section
$K_x := K(\cdot,x)\in\mH$. By abuse of notation, for a bounded linear functional $\phi\in\mH^\star$, we define
\begin{align*}
K(\phi,x) \;:=\; \phi\!\big(K_x\big) \;\in\; \R.
\end{align*}
Then, naturally, for a vector of functionals $\phi=(\phi_1,\dots,\phi_N)\in(\mH^\star)^N$, we have
\begin{align*}
K(\phi,x) \;:=\; \phi\!\big(K_x\big) = \big(\, \phi_1(K_x),\dots,\phi_N(K_x) \,\big)^\top \;\in\; \R^N.
\end{align*}
Then, for two vectors of functionals $\phi=(\phi_i)_{i=1}^N$ and $\psi=(\psi_j)_{j=1}^M$,
we have
\begin{align*}
K(\phi,\psi) \;\in\; \R^{N\times M},
\qquad
K(\phi,\psi)_{ij}
\;:=\;
\psi_j\!\big(K(\phi_i,\cdot)\big)
\;=\;
\psi_j\!\big( K_{\phi_i} \big),
\end{align*}
where $K(\phi_i,\cdot)\in\mH$ satisfies $\langle f, K(\phi_i,\cdot)\rangle_\mH=\phi_i(f)$.

\begin{theorem}[generalized representer theorem {{\cite[Cor.~17.12]{owhadi2019operator}}}]\label{prop:generalized-rep-theorem}
Let $\mH$ be an RKHS associated to the kernel $K: \mX \times \mX \to \R$ and let $\phi=(\phi_1,\dots,\phi_N)\in(\mH^\star)^N$ be a vector of bounded linear functionals on RKHS $\mH$. Consider
\begin{equation*}
f^\star := \argmin_{f \,\in \,\mH} \| f\|_{\mH} \quad \text{s.t.} \quad 
 \phi_i(f) = \xi_i, \qquad i=1, \dots, N, 
\end{equation*}
for $\xi = (\xi_1, \dots, \xi_N) \in \R^N$. If $K(\phi,phi)$ is invertible, then every minimizer $f^\star$ has the form 
\begin{align*}
f^\star = K(\phi, \cdot)^\top K(\phi, \phi)^{-1} \xi.
\end{align*}
Moreover, it holds that
\begin{align*}
\|f^\star\|_\mH^2 + \tfrac{1}{\lambda}\|\phi f - \xi\|_2^2
\;=\; \xi^\top\big(K(\phi,\phi)+\lambda I\big)^{-1} \xi.
\end{align*}
\end{theorem}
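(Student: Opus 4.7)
The plan is to reduce this infinite-dimensional optimization to a finite-dimensional linear algebra problem via the standard orthogonal-projection representer argument, and then invert the resulting normal equations using the invertibility hypothesis on $K(\phi,\phi)$.

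First I would observe that $K(\phi_i,\cdot)\in\mH$ is precisely the Riesz representer of $\phi_i$: the reproducing property gives, for any $x\in\mX$, $\phi_i(K_x)=K(\phi_i,x)=\langle K(\phi_i,\cdot),K_x\rangle_\mH$, so $\phi_i(\cdot)=\langle\cdot,K(\phi_i,\cdot)\rangle_\mH$ on the dense span of kernel sections and hence on all of $\mH$ by continuity. Consequently $K(\phi,\phi)_{ij}=\phi_j(K(\phi_i,\cdot))=\langle K(\phi_i,\cdot),K(\phi_j,\cdot)\rangle_\mH$ is the Gram matrix of representers, symmetric and positive semidefinite, with invertibility equivalent to their linear independence. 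Setting $V:=\operatorname{span}\{K(\phi_i,\cdot)\}_{i=1}^N$ and writing $f=f_V+f_{V^\perp}$ orthogonally, the constraints $\phi_i(f)=\xi_i$ depend only on $f_V$, while $\|f\|_\mH^2=\|f_V\|_\mH^2+\|f_{V^\perp}\|_\mH^2$, so any minimizer must have $f_{V^\perp}=0$ and therefore take the form $f^\star=K(\phi,\cdot)^\top c$ for some $c\in\R^N$.

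Substituting this ansatz into the equality constraints yields the linear system $K(\phi,\phi)\,c=\xi$, which under the invertibility hypothesis has the unique solution $c=K(\phi,\phi)^{-1}\xi$, giving the stated closed form. The associated squared norm is $\|f^\star\|_\mH^2=c^\top K(\phi,\phi)c=\xi^\top K(\phi,\phi)^{-1}\xi$, which recovers the ``moreover'' identity in the hard-constraint case where $\phi f^\star-\xi=0$ and is the $\lambda\to 0$ limit of the general formula. For the full penalized version $\|f\|_\mH^2+\tfrac{1}{\lambda}\|\phi f-\xi\|_2^2$, the same projection argument again forces the minimizer into $V$; abbreviating $\mK:=K(\phi,\phi)$ and writing $f=K(\phi,\cdot)^\top c$, the objective becomes $c^\top\mK c+\tfrac{1}{\lambda}\|\mK c-\xi\|_2^2$. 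Setting the gradient in $c$ to zero gives $\mK(\mK+\lambda I)c=\mK\xi$, and invertibility of $\mK$ yields $c=(\mK+\lambda I)^{-1}\xi$; the residual then simplifies to $\mK c-\xi=-\lambda(\mK+\lambda I)^{-1}\xi$, and the two terms of the objective collapse through the resolvent identity $\mK(\mK+\lambda I)^{-1}+\lambda(\mK+\lambda I)^{-1}=I$ to produce $\xi^\top(\mK+\lambda I)^{-1}\xi$.

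The only real obstacle is bookkeeping in this final algebraic collapse: one has to verify that the quadratic and penalty contributions telescope cleanly through the resolvent identity rather than by accident, and to reconcile the mild ambiguity in the statement by interpreting $f^\star$ in the identity as the penalized minimizer while the explicit closed form in the first display corresponds to the hard-constrained problem (the two being related by taking $\lambda\to 0$). Everything else is standard Hilbert-space linear algebra, and the cited reference in Owhadi--Scovel already provides the abstract template that one specializes to the bounded-linear-functional setting here.
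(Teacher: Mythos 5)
Your proof is correct. Note that the paper does not actually prove this statement --- it is imported verbatim as a citation to Owhadi--Scovel (Cor.~17.12) --- so there is no in-paper argument to compare against; what you have written is the standard self-contained derivation, and it is the right one. The two halves of your argument are both sound: the orthogonal decomposition $f=f_V+f_{V^\perp}$ with $V=\operatorname{span}\{K(\phi_i,\cdot)\}$ reduces both the hard-constrained and the penalized problems to finite-dimensional quadratic programs in $c$, the invertibility of $\mathbf{K}=K(\phi,\phi)$ gives $c=\mathbf{K}^{-1}\xi$ in the constrained case and $c=(\mathbf{K}+\lambda I)^{-1}\xi$ in the penalized case, and your resolvent computation $\mathbf{K}(\mathbf{K}+\lambda I)^{-1}+\lambda(\mathbf{K}+\lambda I)^{-1}=I$ does collapse the optimal objective to $\xi^\top(\mathbf{K}+\lambda I)^{-1}\xi$ exactly as claimed. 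You are also right to flag, and correctly resolve, the ambiguity in the statement itself: as printed, the ``moreover'' display mixes $f^\star$ and $f$ and reads as an identity at a single function, whereas it is really the statement that the \emph{optimal value} of the penalized problem equals $\xi^\top(\mathbf{K}+\lambda I)^{-1}\xi$, with the first display being the minimizer of the separate hard-constrained problem (recovered in the $\lambda\to 0$ limit). One cosmetic remark: the identification of $K(\phi_i,\cdot)$ as the Riesz representer can be obtained in one line by evaluating the Riesz representer $g_i$ at $x$ via $g_i(x)=\langle g_i,K_x\rangle=\phi_i(K_x)$, rather than arguing by density over kernel sections, but your route also works since the paper's setup already posits $K(\phi_i,\cdot)\in\mH$.
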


\subsection{Common Structure}

We write $\delta_r$ for the bounded point-evaluation functional at $r$; i.e., for a function $f$, $\delta_r(f)=f(r)$. We define two vectors of bounded linear functionals used in derivation of both methods.
Further, we assume $\Gamma,\Sigma\in C^1(\R\times\R)$ and
$\Psi\in C^2(\R^{2m}\times\R^{2m})$ so that the derivative evaluations below are bounded linear functionals on the corresponding RKHSs.

\subsection{Implementing 2-step method}
Denote 
\begin{align*}
\phi_{\mathrm{obs}} \;\coloneqq\; (\delta_{s_1},\dots,\delta_{s_{N_{\mathrm{obs}}}})^\top \;\in\;
\big(\mH_\Gamma^\star\big)^{N_{\mathrm{obs}}} \text{or $\big(\mH_\Sigma^\star\big)^{N_{\mathrm{obs}}}$ (depending on context).}
\end{align*}
By the generalized representer theorem (Theorem \ref{prop:generalized-rep-theorem}) applied with
$\mH=\mH_\Gamma$, $\phi=\phi_{\mathrm{obs}}$, $\xi=q(S)$ and with $\mH=\mH_\Sigma$, $\phi=\phi_{\mathrm{obs}}$, $\xi=p(S)$ to problems \ref{q-kernel-interpolant} and \ref{p-kernel-interpolant} respectively, 
their unique interpolants are given by
\begin{align}
q^\star(\tau) \;=\; \Gamma(\tau,S)\,\Gamma(S,S)^{-1}\,q(S),\label{interp_q}\\
p^\star(\tau) \;=\; \Sigma(\tau,S)\,\Sigma(S,S)^{-1}\,p(S).\label{interp_p}
\end{align}
Now, given the approximants $q^\star,p^\star$, in order to create input for $H$, since $\Gamma,\Sigma\in C^1$, we can explicitly differentiate these interpolants in time as follows
\begin{align*}
\dot q^\star(\tau) \;=\; \partial_\tau \Gamma(\tau,S)\,\Gamma(S,S)^{-1}\,q(S),\qquad
\dot p^\star(\tau) \;=\; \partial_\tau \Sigma(\tau,S)\,\Sigma(S,S)^{-1}\,p(S).
\end{align*}
We will use the stacked derivative vector at collocation times in the form
\begin{align*}
z \;=\; \begin{pmatrix}\dot q^\star(T)\\[2pt] -\,\dot p^\star(T)\end{pmatrix} \in \R^{2Nm},
\end{align*}
reflecting Hamilton’s equations $\dot q=\partial_p H$ and $\dot p=-\partial_q H$. Denote
\begin{align}\label{varphi}
\varphi \;\coloneqq\; \big(\,\varphi^{(p)}_{1,1},\dots,\varphi^{(p)}_{1,m},\;\dots,\;\varphi^{(p)}_{N,1},\dots,\varphi^{(p)}_{N,m},\;
\varphi^{(q)}_{1,1},\dots,\varphi^{(q)}_{N,m}\,\big)^\top \in (\mH_\Psi^\star)^{2Nm},
\end{align}
where
\begin{align*}
\varphi^{(p)}_{i,j} \;\coloneqq\; \delta_{y(t_i)}\circ \partial_{p_j},
\qquad
\varphi^{(q)}_{i,j} \;\coloneqq\; \delta_{y(t_i)}\circ \partial_{q_j},
\qquad i=1,\dots,N, \qquad j = 1, \dots, m.
\end{align*}
Note that problem \ref{H-kernel-interpolant} can now be rewritten in the equivalent form
\begin{equation}\label{eq:H-regression}
H^\star = \argmin_{\wh H\,\in \,\mH_{\Psi}}\;\; \|\wh H\|_\Psi^2
\;+\; \big\|\Lambda^{-1/2}\big(\, \varphi(\wh H) - z \,\big)\big\|_2^2,
\end{equation}
where $\Lambda \in \mathbb{R}^{2Nm\times 2Nm}$ is a block-diagonal matrix defined as
\begin{align*}
\Lambda = \begin{bmatrix}
\lambda_1 I_{Nm} & 0 \\
0 & \lambda_2 I_{Nm}
\end{bmatrix}.
\end{align*}
By the generalized representer theorem (Theorem \ref{prop:generalized-rep-theorem}) applied with
$\mH=\mH_\Psi$, $\phi=\varphi$, $\xi=z$ to problem \ref{eq:H-regression}, its unique minimizer is given by
\begin{align}\label{interp_H}
H^\star(x) \;=\; \Psi(\varphi,x)^\top \big( \Psi(\varphi,\varphi) + \Lambda \big)^{-1} z,
\end{align}
where
\begin{align*}
\Psi\big(\varphi^{(p)}_{i,j},x\big) \;=\; \partial_{p_j}^{y}\,\Psi\big(y,x\big)\big|_{y=y(t_i)},\quad
\Psi\big(\varphi^{(q)}_{i,j},x\big) \;=\; \partial_{q_j}^{y}\,\Psi\big(y,x\big)\big|_{y=y(t_i)}, \quad i=1,\dots,N, \, j = 1, \dots, m.
\end{align*}
thus
\begin{align*}
\Psi(\varphi, x)
=\begin{pmatrix}
\partial_{p}\Psi\big(y(T),\,x\big)\\[2pt]
\partial_{q}\Psi\big(y(T),\,x\big)
\end{pmatrix} \in \R^{2Nm},
\end{align*}
and
\begin{align*}
\Psi(\varphi,\varphi)
&=
\begin{bmatrix}
\displaystyle \frac{\partial^2 \Psi}{\partial p \,\partial p'}(y(T),y(T))
& \displaystyle \frac{\partial^2 \Psi}{\partial p \,\partial q'}(y(T),y(T))\\[8pt]
\displaystyle \frac{\partial^2 \Psi}{\partial q \,\partial p'}(y(T),y(T))
& \displaystyle \frac{\partial^2 \Psi}{\partial q \,\partial q'}(y(T),y(T))
\end{bmatrix} \in\R^{2Nm\times 2Nm},
\end{align*}
where each block is an $Nm\times Nm$ matrix.

\subsubsection{Computational Bottleneck of the 2-step method}

The two-step approach is the more convenient to implement as it only requires numerically solving linear systems in \ref{interp_q}, \ref{interp_p} and \ref{interp_H}.
Since the matrices $\Gamma(S,S)$ and $ \Sigma(S,S)$ are often ill-conditioned, in practice we regularize them by adding a small perturbation defined  as a multiple of the identity matrix and invert the perturbed matrices $\Gamma(S,S) + \lambda_q I_{{N_{\mathrm{obs}}}}$ and $ \Sigma(S,S) + \lambda_p I_{{N_{\mathrm{obs}}}}$  with regularization parameters $\lambda_q, \lambda_p >0$. The main computational cost is in the inversion of kernel matrices which can be handled using standard numerical solvers for small data sets. 

\subsection{Implementing 1-step method}\label{sec:1-step-implementation}

For the 1-step method, since we don't have data on the derivatives $\dot {\wh q}(T), \dot {\wh p}(T)$, we introduce the latent (slack) time-derivative functions $z_1,z_2: [0,T_{\mathrm{final}}] \to \R^{m}$ and rewrite problem \ref{one-step-optimal-recovery} as
\begin{align*}
 \min_{\wh q \,\in\, \mH_\Gamma,\,\wh p \,\in\, \mH_\Sigma,\,\wh H \, \in \,\mH_\Psi, \, z_1, \, z_2} &\|\wh H\|_{\Psi}^2 + \frac{1}{\lambda_1} \|\wh q\|_{\Sigma}^2 + \frac{1}{\lambda_2} \|\wh p\|_{\Gamma}^2
    \\
    &\text{subject to} \quad
    \begin{cases}
        \wh q(S), \dot {\wh q}(T) = q(S), z_1(T),\\
        \wh p(S), \dot {\wh p}(T) = p(S), z_2(T),\\
        z_1(T), z_2(T) = \partial_p \wh H((q,p)(T)), - \partial_q \wh H((q,p)(T)).
    \end{cases}
\end{align*}

We choose Tikhonov penalties for all constraints and obtain the unconstrained objective
\begin{equation}\label{unconstrained}
\begin{aligned}
\min_{\wh q \,\in\, \mH_\Gamma,\,\wh p \,\in\, \mH_\Sigma,\,\wh H \, \in \,\mH_\Psi, \, z_1, \, z_2}\;
\|\wh H\|_{\Psi}^2 + \frac{1}{\lambda_1} \|\wh q\|_{\Sigma}^2 + \frac{1}{\lambda_2} \|\wh p\|_{\Gamma}^2
&+\frac{1}{\lambda_1}\big\|\Phi\,\wh q - (q(S),z_1(T))\big\|_2^2\\
&+\frac{1}{\lambda_2}\big\|\Phi\,\wh p - (p(S),z_2(T))\big\|_2^2\\
&+\frac{1}{\lambda}\big\|\varphi \wh H - (z_1(T),-z_2(T))\big\|_2^2,
\end{aligned}
\end{equation}
where $\varphi$ is defined as before in \ref{varphi} and 
\begin{align*}
\Phi \coloneqq
\big(\,\delta_{s_1},\dots,\delta_{s_{N_{\mathrm{obs}}}},\;\delta_{t_1}\!\circ\tfrac{\dif}{\dif t},\dots,\delta_{t_N}\!\circ\tfrac{\dif}{\dif t}\,\big)^\top
 \;\in\;
\big(\mH_\Gamma^\star\big)^{N_{\mathrm{obs}}+N} \text{or $\big(\mH_\Sigma^\star\big)^{N_{\mathrm{obs}}+N}$ (depending on context).}
\end{align*}
For fixed $z_1,z_2$, each term in \ref{unconstrained} is a Tikhonov problem in an RKHS with a linear observation operator. By the generalized representer theorem (Theorem~\ref{prop:generalized-rep-theorem}) in its penalized form, minimizers $q^\star, p^\star, H^\star$ of problem \ref{unconstrained} are of the form
\begin{align}
q^\star(\tau)
&= \Gamma\!\big(\tau,\Phi\big)\,\big(\Gamma(\Phi,\Phi)+\lambda_1 I\big)^{-1}
\begin{pmatrix} q(S)\\ z_1(T)\end{pmatrix},\label{eq:qstar-1step}\\[3pt]
p^\star(\tau)
&= \Sigma\!\big(\tau,\Phi\big)\,\big(\Sigma(\Phi,\Phi)+\lambda_2 I\big)^{-1}
\begin{pmatrix} p(S)\\ z_2(T)\end{pmatrix},\label{eq:pstar-1step}\\[3pt]
H^\star\big(x\big)
&= 
\Psi(x,\varphi) \left(\Psi(\varphi,\varphi)+\lambda I\right)^{-1}
\begin{pmatrix} z_1(T)\\ -\,z_2(T)\end{pmatrix}.\label{eq:Hstar-1step}
\end{align}
Substituting \ref{eq:qstar-1step}-\ref{eq:Hstar-1step} into \ref{unconstrained} and using the norm identity in theorem \ref{prop:generalized-rep-theorem}, we obtain finite-dimensional reduced problem in $z_1,z_2$ as follows
\begin{equation}\label{eq:reduced-z}
\begin{aligned}
\min_{z_1,z_2}\quad
&\begin{pmatrix} q(S)\\ z_1(T)\end{pmatrix}^{\!\top}
\big(\Gamma(\Phi,\Phi)+\lambda_1 I\big)^{-1}
\begin{pmatrix} q(S)\\ z_1(T)\end{pmatrix}
\\[-1pt]
&\quad+\;
\begin{pmatrix} p(S)\\ z_2(T)\end{pmatrix}^{\!\top}
\big(\Sigma(\Phi,\Phi)+\lambda_2 I\big)^{-1}
\begin{pmatrix} p(S)\\ z_2(T)\end{pmatrix}
\\[-1pt]
&\quad+\;
\begin{pmatrix} z_1(T)\\ -\,z_2(T)\end{pmatrix}^{\!\top}
\left(\Psi(\varphi,\varphi)+\lambda I\right)^{-1}
\begin{pmatrix} z_1(T)\\ -\,z_2(T)\end{pmatrix}.
\end{aligned}
\end{equation}
Note that $(q^\star,p^\star)(T)$ depend on $(z_1,z_2)$ via \ref{eq:qstar-1step}-\ref{eq:pstar-1step}, so \ref{eq:reduced-z} is generally nonconvex. Thus, we solve \ref{eq:reduced-z} for $(z_1,z_2)$ using a smooth large-scale optimizer (e.g., L-BFGS) and warm-start with the outputs of the 2-step method. 

\subsubsection{Computational Bottleneck of the 1-step method}

The numerical implementation of the 1-step learning method is significantly more challenging as the objective function couples the estimation of the Hamiltonian $H$ with the dynamics in a single, joint optimization. The main bottleneck here lies in solving the resulting high-dimensional, nonlinear optimization problem over the function space $\mathcal{H}_\Psi$. 
To address this, we employ a quasi-Newton optimization algorithm, specifically L-BFGS, which is well-suited for large-scale smooth optimization problems. Moreover, to improve convergence and reduce sensitivity to poor initialization, we warm-start the 1-step solver using the output of the 2-step method. This strategy not only accelerates convergence but also improves the stability and reliability of the learned Hamiltonian.

\subsubsection{Problem-agnostic CGC Framework \footref{fn:code}}\label{subsec:cgc}
While it is possible to directly implement the 1-step method described in section \ref{sec:1-step-implementation} specifically for Hamiltonian systems, we instead provide a more generic, problem-agnostic framework that implements the general setting of the CGC framework \cite{houman_cgc}. 
The framework, written in Python, provides an object-oriented interface for writing the computational graph underlying any CGC problem. 
It is built on top of JAX \cite{jax2018github}, which allows the method to run on commercial CPUs or utilize the power of either Nvidia or AMD GPUs to boost the speed and performance of the CGC computation. For more technical explanation, see appendix \ref{sec:app_cgc}.

\subsection{Choosing kernels and hyperparameters}\label{sec:kernels}

While the 2-step and 1-step learning methods are generic and well-defined for any choice of kernels $\Gamma, \Sigma,\Psi$, the practical performance of these algorithms is closely tied to a good choice of kernels. In general, the choice of these kernels imposes constraints on our model
classes for the functions $q,p,H$. Moreover, in many applications we may have access to additional knowledge about the structure of $q,p,H$ and building that knowledge into the kernel will improve the accuracy of our approximation.

\subsubsection{Choosing $\Gamma, \Sigma$ for $q,p$}

In this work, we assume to have no \textit{a priori} knowledge on the functional form of the trajectories $q,p$. Therefore, in all of our experiments, we take the following Gaussian kernel for approximating $q,p$
\begin{align*}
    \Gamma = \Sigma = K, \quad K(t,t')= \exp(-\frac{\|t-t'\|^2}{2\theta^2}),
\end{align*}
with parameters $\theta = 1.0, \,\lambda = 1e-5.$

\subsubsection{Choosing $\Psi$ for $H$}

In a generic situation where there is no prior knowledge beyond smoothness about the structure of the Hamiltonian $H$, it is sensible to take a generic Gaussian kernel for approximating $H$ similarly to our approach for $q,p$. To this end, we considered the Gaussian kernel
\begin{align*}
        \Psi(q,p,q',p')= \exp(-\frac{\|q-q'\|^2}{2\theta^2}) \exp(-\frac{\|p-p'\|^2}{2\theta^2}),
    \end{align*}
with parameters $\theta = 1.0, \, \lambda = 0.001$.
However, many equations that arise in physical sciences have polynomial forms \cite[Sec.~1.2]{evans2022partial}. In particular, a lot of Hamiltonians are separable polynomials; that is, $H(q,p) = H_1(q) + H_2(p)$ where $H_1, H_2$ are polynomials. In these situations, we can incorporate that information by choosing the associated kernel to be a separable polynomial kernel 
    \begin{align*}
        \Psi(q,p,q',p')= (q q' + \theta)^d + (p p' + \theta)^d,
    \end{align*}
with parameters $\theta = 1.0, \, \lambda = 0.001$. We also chose the polynomials to be cubic, i.e. $d=3$, as that is usually the maximum degree of polynomial Hamiltonian systems. In cases when $H$ exhibits additive polynomial  and non-polynomial structure, we can take a mixed kernel of the form
\begin{align*}
        \Psi(q,p,q',p')= \exp(-\frac{\|q-q'\|^2}{2\theta^2}) + (p p' + \theta)^d.
    \end{align*}

\section{Numerical Experiments} \label{sec:numerics}

In this section, we numerically illustrate the effectiveness of the proposed methods using a range of benchmark Hamiltonian systems. The purpose of these experiments is to validate both the system identification and forecasting capabilities of the 1-step and 2-step kernel-based methods introduced in Sections \ref{twostep_method} and \ref{onestep_method}. In particular, we aim to demonstrate that the 1-step method outperforms the 2-step method in the scarce-data regime, due to its ability to jointly learn the dynamics and the governing Hamiltonian while enforcing the underlying physical structure. Moreover, we numerically compare the efficiency of different kernels for $H$.
We consider four representative Hamiltonian systems: the mass-spring system, the two-mass-three-spring system, the Hénon-Heiles system, and the nonlinear pendulum. These systems span a range of complexities, from linear to nonlinear, and from low-dimensional to higher-dimensional and chaotic regimes. For each system, we study the accuracy of trajectory reconstruction, as well as the accuracy of Hamiltonian recovery, and the fidelity of long-term forecasting. 

\subsection{Common Structure}

For all the experiments, we follow a common structure for data generation and forecasting and use this common structure for both 2-step and 1-step methods. 

\subsubsection{Training Data}

Given the explicit form of the true Hamiltonian function $H(y)$, we derive $\frac{\dif y}{\dif t}$ and along with the initial condition $y(t_0)$ and the time interval, numerically integrate using the \texttt{odeint} scheme from SciPy, which is based on variable-coefficient Adams methods for non-stiff problems, producing our collocation points of $q,p$ evaluated at the given time interval. 
The integration is performed on a set $T_{\mathrm{col}}$ of $N_{\mathrm{col}}=200$ uniformly sampled time points over a fixed time interval from $t = 0$ to $t = 40$ (recall that, in vector form, this is represented as $T$ in the methodology). 
For sparsity factors $\alpha= 0, 0.5, 0.6, 0.7, 0.8, 0.9$, observed points are randomly subsampled with probability $1 - \alpha$ from the collocation set. The selection of observed points $S_{\mathrm{obs}}$ (denoted $S$ in vector form) is randomized at each experimental run, each experiment is repeated ten times with different random subsamplings, and the results are averaged.

\subsubsection{Testing Data}

For testing, we use the recovered Hamiltonian $H^\star$ and the reconstructed initial state $y^\star(t_0)$ to simulate the system $\frac{\dif y^\star}{\dif t}$ on the previously seen interval (interpolation) and forward in time beyond the training window (forecasting).
Denote
\begin{align*}
    T_{\mathrm{test}} \coloneqq T_{\mathrm{int}} \cup T_{\mathrm{ext}},
\end{align*}
to be the \textit{testing set}, where $T_{\mathrm{int}}$ and $T_{\mathrm{ext}}$ are defined as following:
\begin{itemize}
    \item $T_{\mathrm{int}}$: first part of $T_{\mathrm{test}}$ defined as $T_{\mathrm{int}} \coloneqq T_{\mathrm{col}}\backslash T_{\mathrm{obs}}$, consisting of the unobserved collocation points ranging from $t=0$ to $t=40$, used to test the accuracy of interpolation;
    \item $T_{\mathrm{ext}}$: second part of $T_{\mathrm{test}}$, consisting of $200$ uniformly sampled time points taken from $t=40$ to $t=80$, used to test the accuracy of extrapolation (forecasting). 
\end{itemize}

\subsubsection{Evaluation}

Evaluation is performed using both qualitative and quantitative metrics on the testing set. We visualize the learned trajectories over time to compare them with the true dynamics. We also evaluate the relative errors in the recovered trajectories and Hamiltonian, as well as the deviation of the total energy over time. Specifically, the trajectory errors (state errors) are measured using relative error (RE), defined as
\begin{align*}
\mathrm{RE}_q
=
\frac{\|\,q - q^\star\,\|_{L^2(T_{\mathrm{test}})}}
     {\|\,q\,\|_{L^2(T_{\mathrm{test}})}} 
=
\frac{\left(\sum_{t_i \in T_{\mathrm{test}}} \|\,q(t_i)-q^\star(t_i)\,\|_2^2 \right)^{\frac12}}
     {\left(\sum_{t_i \in T_{\mathrm{test}}} \|\,q(t_i)\,\|_2^2 \right)^{\frac12}},
\end{align*}
for $q$ and similarly for $p$. 

\subsubsection{Energy Conservation and Symplectic Integration}

An important characteristic of Hamiltonian systems is their energy conservation. That is, if $H$ has no explicit time dependence, then energy is conserved, i.e. $\frac{\dif H}{\dif t}=0.$ Most standard numerical integration schemes usually drift in energy over time. To treat this issue, different variations of symplectic integrators are implemented for Hamiltonians. Here we simply employ \texttt{odeint} at a  high precision to extrapolate with the learned Hamiltonian, so a symplectic scheme was unnecessary. Therefore, our measure of  error in $H$ is our extrapolation error. 

\subsubsection{Presentation of Results}
We considered two different regimes through which we compared the two methods numerically:  
(i) the Gaussian kernel for $H$, and  
(ii) the separable polynomial kernel for $H$.  
For all examples, we ran experiments at all sparsity factors mentioned above and reported relative errors for both interpolation and extrapolation. We present the results in the following ways:

\begin{itemize}
    \item \textbf{Trajectory recovery plots:} For demonstration purposes, we only present these plots for sparsity factors $0.5$ and $0.8$, although results for all values are available in the code repository.
    
    \item \textbf{Error tables:} These tables report interpolation and extrapolation errors for the 2-step and 1-step methods with Gaussian and polynomial (or additive polynomial Gaussian) kernels, across all sparsity factors. 
    Note that the interpolation errors for the 2-step method (within a single system) are exactly the same regardless of the kernel used for $H$. The reason is that each of $q$ and $p$ is represented with the same Gaussian kernel, independent of the kernel chosen for $H$, as the choice of $H$’s kernel comes into play only in the second step. This is not the case for the 1-step method, where the choice of $H$’s kernel also affects interpolation, since the Hamiltonian constraints are enforced globally on all non-observed points. 

    \item \textbf{Error plots:} To quantify these observations, we compute the relative error (RE) for each variable as a function of the sparsity factor. 
    Each plot compares the 2-step and 1-step methods for a fixed kernel across all sparsity factors over the entire time period. 
    This process is performed separately for interpolation and extrapolation. 
    Extrapolation error plots are shown for each example in the main text, while the interpolation error plots are provided in Section~\ref{SM: plots}.
\end{itemize}

\subsection{Summary of Results}

Across all systems, kernels, and sparsity levels, several clear patterns emerge.
\begin{itemize}
    \item \textbf{Interpolation vs.\ Extrapolation:} 
    Interpolation errors are consistently much smaller than extrapolation errors. Both the 2-step and 1-step methods perform well when interpolating within known data regions, where the problem is well-posed; i.e., the approximation error remains bounded within the region covered by the data. In extrapolation settings, as expected from the theory of dynamical forecasting, accuracy decreases since the problem becomes ill-posed and errors amplify under time evolution. Nevertheless, the 1-step method consistently manages this instability better than the 2-step method.
    
    \item \textbf{2-step vs.\ 1-step method:} 
    For interpolation, the 1-step method almost always achieves lower errors than the 2-step method. For extrapolation, the 1-step method also tends to outperform the 2-step method, especially at low and moderate sparsity. Even at very high sparsity levels ($0.7$ and above), where both methods face significant challenges, the 1-step method still yields the smaller errors. This demonstrates the advantage of learning the mapping in a single stage, where the 1-step method avoids error accumulation and simultaneously incorporates both the physics and the data of the problem.
    
    \item \textbf{Effect of sparsity:} 
    As sparsity increases, errors naturally grow for both interpolation and extrapolation. This trend is expected, since fewer data points reduce the effective constraints on the recovery problem. Importantly, across all sparsity levels, the 1-step method remains more accurate and stable than the 2-step method. The improved robustness comes from the fact that the 1-step formulation directly incorporates the dynamics, whereas the 2-step method loses accuracy through derivative approximations and decoupling of physics from data.
    
    \item \textbf{Kernel dependence:} 
    With the Gaussian kernel, errors increase faster at high sparsity compared to the separable or additive polynomial kernels. Polynomial-based kernels yield more stable extrapolation errors, and when combined with the 1-step method, provide particularly strong performance. This highlights not only the importance of designing appropriate kernels when prior knowledge is available, but also the ability of the 1-step method to consistently deliver superior results across different kernel choices.
\end{itemize}

We will now proceed to each individual system.

\subsection{Mass-Spring System}

We first consider a harmonic oscillator with Hamiltonian
\begin{align*}
    H(q,p) = \frac{1}{2}(q^2 + p^2),
\end{align*}
with initial condition $y(t_0) = (0, 1)$.  
Being linear and one-dimensional, this classical system has periodic motion and serves as a simple test case for validating correctness. 
Since the Hamiltonian is symmetric with respect to $q$ and $p$, and their recovery behavior is very similar, we moved the recovery plots and the error table for $p$ to the supplementary materials (see Section~\ref{SM: plots}).

\subsubsection{Trajectory recovery plots}
Figure~\ref{fig:recovery_ms_q} shows the recovered trajectories for $q$ over time, with sparsity factors $0.5$ and $0.8$. 
The ground truth trajectories are plotted in blue, while red crosses denote observed data points. 
Predictions from the 2-step and 1-step methods are shown with dashed and dash-dotted lines, respectively.  
At lower sparsity ($0.5$), both methods provide accurate approximations. 
At higher sparsity ($0.8$), the 1-step method remains stable and tracks the true dynamics more closely, 
whereas the 2-step method exhibits noticeable phase drift.

\begin{figure}[htbp]
    \centering
    \begin{subfigure}{0.36\textwidth}
        \includegraphics[width=\linewidth]{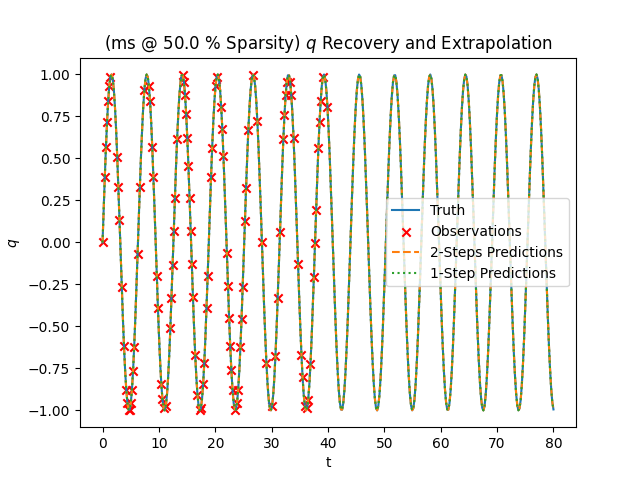}
        \caption*{Gaussian 0.5}
    \end{subfigure}
    \begin{subfigure}{0.36\textwidth}
        \includegraphics[width=\linewidth]{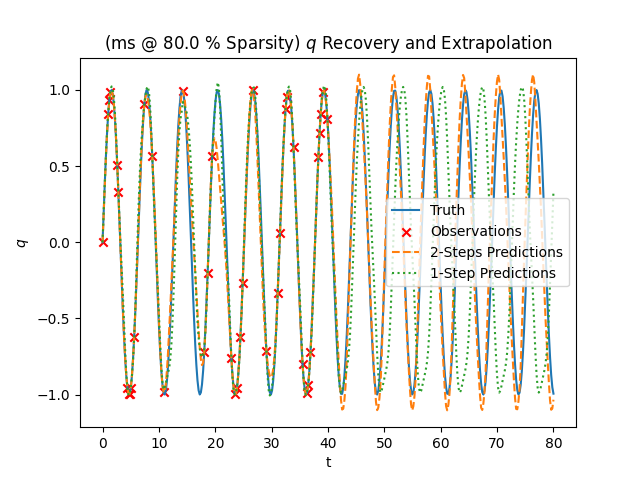}
        \caption*{Gaussian 0.8}
    \end{subfigure}
    \begin{subfigure}{0.36\textwidth}
        \includegraphics[width=\linewidth]{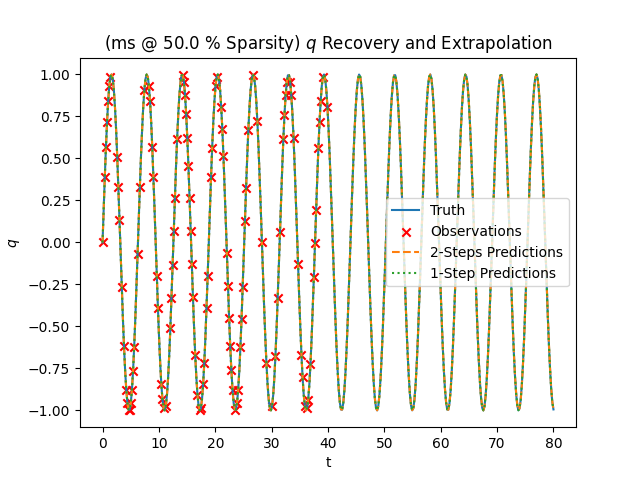}
        \caption*{Poly 0.5}
    \end{subfigure}
    \begin{subfigure}{0.36\textwidth}
        \includegraphics[width=\linewidth]{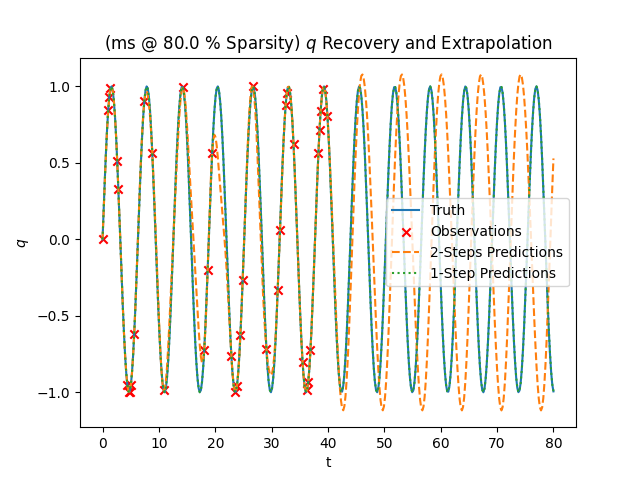}
        \caption*{Poly 0.8}
    \end{subfigure}
    \caption{Recovery of $q$ in the Mass-Spring system.}
    \label{fig:recovery_ms_q}
\end{figure}

\subsubsection{Error tables}
Table~\ref{tab:ms-q} reports the relative errors for $q$.  
Two main observations can be drawn from these results:  
\begin{enumerate}[label=(\roman*)]
    \item 1-step interpolation errors remain close to zero up to moderate sparsity ($0.6$). Extrapolation errors grow rapidly with increasing sparsity, but the 1-step method consistently outperforms the 2-step method. Even at high sparsity levels, where both methods deteriorate, the 1-step method maintains lower error magnitudes, highlighting the effectiveness of the 1-step approach under sparse data.  
    \item Within the same method, the separable polynomial kernel yields smaller errors than the Gaussian kernel, emphasizing the importance of kernel engineering when prior knowledge is available.  
\end{enumerate}
The corresponding table for $p$ is provided in Section~\ref{SM: plots}.

\begin{table}[ht]
\centering
\setlength{\tabcolsep}{12pt} 
\renewcommand{\arraystretch}{1.2} 
\scriptsize

\begin{tabular}{c} 

\begin{subtable}{\textwidth}
\centering
\begin{tabular}{lcccc}
\toprule
\multirow{2}{*}{\textbf{Sparsity}} & \multicolumn{2}{c}{\textbf{Two-Steps Method}} & \multicolumn{2}{c}{\textbf{One-Step Method}} \\
\cmidrule(lr){2-3}\cmidrule(lr){4-5}
& Interpolation   & Extrapolation   & Interpolation   & Extrapolation   \\
\midrule
0.0 & \textbf{0.000 $\pm$ 0.000} & 0.222 $\pm$ 0.000 & 0.005 $\pm$ 0.000 & \textbf{0.036 $\pm$ 0.000} \\
0.5 & 0.281 $\pm$ 0.297 & 1.493 $\pm$ 2.218 & \textbf{0.025 $\pm$ 0.027} & \textbf{0.138 $\pm$ 0.084} \\
0.6 & 0.815 $\pm$ 0.711 & 8.951 $\pm$ 13.545 & \textbf{0.195 $\pm$ 0.337} & \textbf{2.545 $\pm$ 5.074} \\
0.7 & 7.048 $\pm$ 7.192 & 59.604 $\pm$ 62.958 & \textbf{6.282 $\pm$ 11.262} & \textbf{34.315 $\pm$ 66.308} \\
0.8 & 23.992 $\pm$ 14.060 & 104.471 $\pm$ 43.668 & \textbf{26.322 $\pm$ 22.815} & \textbf{93.133 $\pm$ 54.790} \\
0.9 & 73.202 $\pm$ 19.451 & 141.407 $\pm$ 14.438 & \textbf{53.002 $\pm$ 32.089} & \textbf{90.448 $\pm$ 56.183} \\
\bottomrule
\end{tabular}
\caption*{Gaussian kernel}
\end{subtable}\\[1.5ex]
\begin{subtable}{\textwidth}
\centering
\begin{tabular}{lcccc}
\toprule
\multirow{2}{*}{\textbf{Sparsity}} & \multicolumn{2}{c}{\textbf{Two-Steps Method}} & \multicolumn{2}{c}{\textbf{One-Step Method}} \\
\cmidrule(lr){2-3}\cmidrule(lr){4-5}
& Interpolation   & Extrapolation   & Interpolation   & Extrapolation   \\
\midrule
0.0 & \textbf{0.000 $\pm$ 0.000} & 0.192 $\pm$ 0.000 & 0.005 $\pm$ 0.000 & \textbf{0.063 $\pm$ 0.000} \\
0.5 & 0.281 $\pm$ 0.297 & 1.243 $\pm$ 1.106 & \textbf{0.018 $\pm$ 0.013} & \textbf{0.160 $\pm$ 0.186} \\
0.6 & 0.815 $\pm$ 0.711 & 3.173 $\pm$ 3.762 & \textbf{0.037 $\pm$ 0.046} & \textbf{0.138 $\pm$ 0.099} \\
0.7 & 7.048 $\pm$ 7.192 & 17.616 $\pm$ 18.914 & \textbf{0.051 $\pm$ 0.034} & \textbf{0.269 $\pm$ 0.217} \\
0.8 & 23.992 $\pm$ 14.060 & 79.531 $\pm$ 42.974 & \textbf{0.108 $\pm$ 0.070} & \textbf{0.286 $\pm$ 0.243} \\
0.9 & 73.202 $\pm$ 19.451 & 147.251 $\pm$ 8.506 & \textbf{0.077 $\pm$ 0.026} & \textbf{0.170 $\pm$ 0.087} \\
\bottomrule
\end{tabular}
\caption*{Separable polynomial kernel}
\end{subtable}
\end{tabular}

\caption{Relative errors (mean $\pm$ std) for $q$ in the Mass-Spring system.}
\label{tab:ms-q}
\end{table}

\subsubsection{Error plots}
Figure~\ref{fig:for_error_ms_all} shows the forecasting relative errors for $q$ and $p$, together with shaded confidence bands representing one standard deviation.  
For both $q$ and $p$, the 1-step method maintains low and stable relative error across all sparsity levels.  
In contrast, the 2-step method exhibits rapidly growing errors beyond a sparsity factor of $0.6$, 
indicating its sensitivity to data sparsity.

\begin{figure}[htbp]
\centering

\begin{subfigure}{\textwidth}
    \centering
    \begin{subfigure}{0.36\textwidth}
        \includegraphics[width=\linewidth]{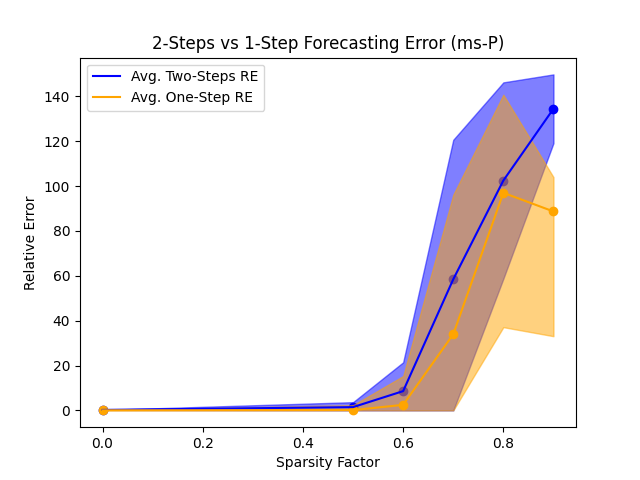}
        \caption*{$p$-error, Gaussian}
    \end{subfigure}
    \begin{subfigure}{0.36\textwidth}
        \includegraphics[width=\linewidth]{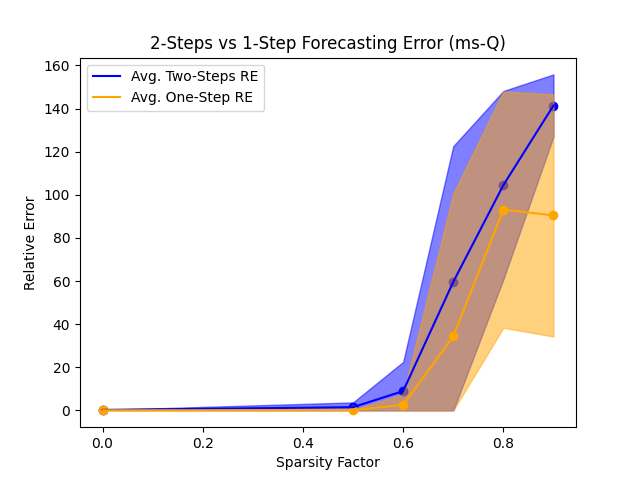}
        \caption*{$q$-error, Gaussian}
    \end{subfigure}
\end{subfigure}

\begin{subfigure}{\textwidth}
    \centering
    \begin{subfigure}{0.36\textwidth}
        \includegraphics[width=\linewidth]{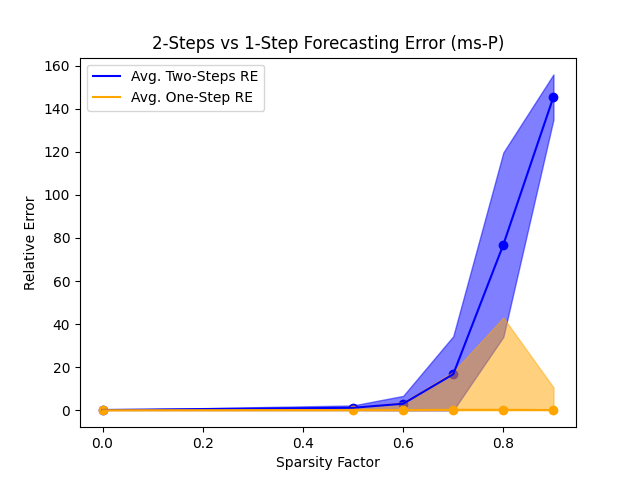}
        \caption*{$p$-error, Separable Polynomial}
    \end{subfigure}
    \begin{subfigure}{0.36\textwidth}
        \includegraphics[width=\linewidth]{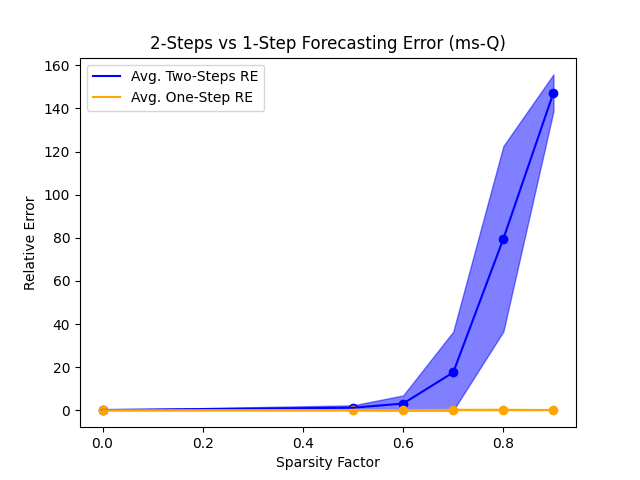}
        \caption*{$q$-error, Separable Polynomial}
    \end{subfigure}
\end{subfigure}

\caption{Forecasting relative error  for the Mass-Spring System.}
\label{fig:for_error_ms_all}
\end{figure}

\subsection{Two-Mass-Three-Spring System}

We now consider a two-dimensional mechanical system composed of two unit masses connected by three springs. Its nonlinear Hamiltonian is given by
\begin{align*}
    H(q,p) = \frac{1}{2}(p_1^2 + p_2^2) + \frac{1}{2}(q_1^2 + q_2^2 + (q_2 - q_1)^2).
\end{align*}
The initial condition is fixed at $y(t_0) = ( 0.2, -0.1, 0.1, -0.1)$.  
The system exhibits coupled oscillatory behavior and is a standard multi-variable benchmark. 

\subsubsection{Trajectory recovery plots}
Similar to the previous system, in Figure~\ref{fig:recovery_m2s3_q1} we observe good recovery at sparsity $0.5$ from both methods, but in the $0.8$ sparsity regime, the 1-step method outperforms the 2-step method, especially in the forecasting region.  
Since the Hamiltonian is symmetric with respect to $q_1$ and $q_2$, and it is an order-two polynomial in $q_1, q_2, p_1, p_2$, we only report the trajectory plots for $q_1$ here and provide the rest in Section~\ref{SM: plots}. 

\begin{figure}[htbp]
\centering
\begin{subfigure}{0.36\textwidth}
\includegraphics[width=\linewidth]{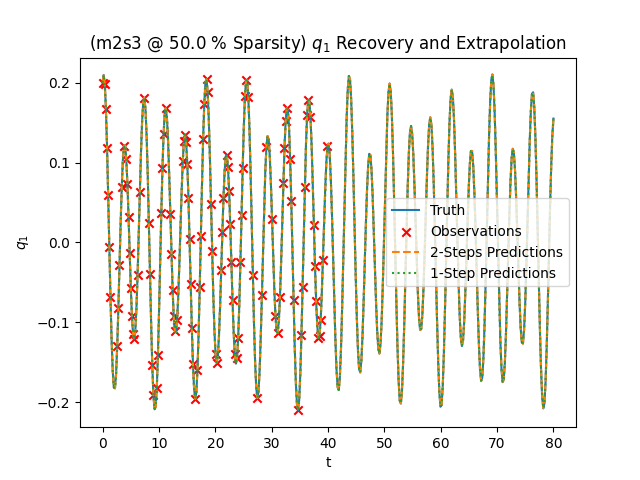}
\caption*{Gaussian 0.5}
\end{subfigure}
\begin{subfigure}{0.36\textwidth}
\includegraphics[width=\linewidth]{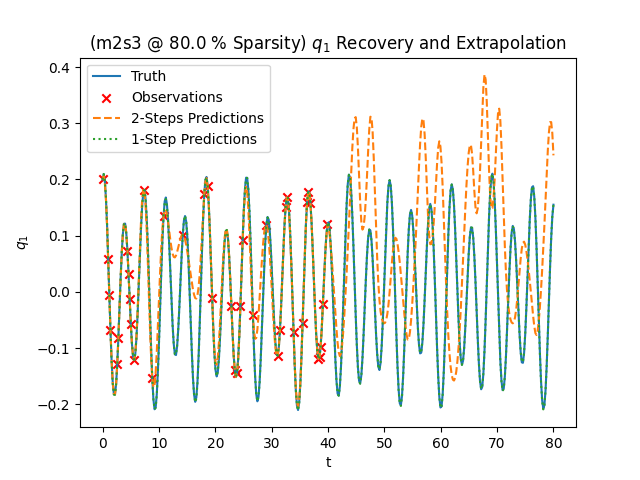}
\caption*{Gaussian 0.8}
\end{subfigure}
\begin{subfigure}{0.36\textwidth}
\includegraphics[width=\linewidth]{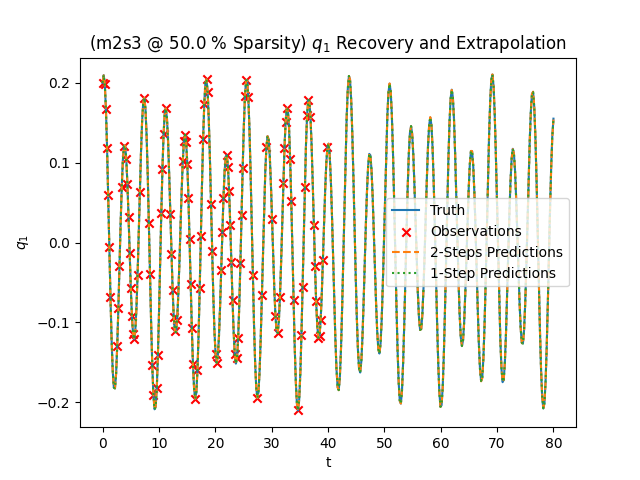}
\caption*{Poly 0.5}
\end{subfigure}
\begin{subfigure}{0.36\textwidth}
\includegraphics[width=\linewidth]{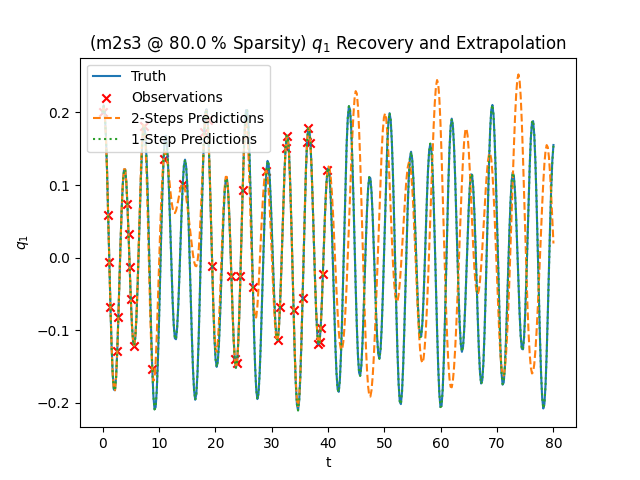}
\caption*{Poly 0.8}
\end{subfigure}
\caption{Recovery of $q_1$ in the Two-Mass-Three-Spring system.}
\label{fig:recovery_m2s3_q1}
\end{figure}

\subsubsection{Error tables}

\begin{table}[ht]
\centering
\setlength{\tabcolsep}{12pt} 
\renewcommand{\arraystretch}{1.2} 
\scriptsize

\begin{tabular}{c} 
\begin{subtable}{\textwidth}
\centering
\begin{tabular}{lcccc}
\toprule
\multirow{2}{*}{\textbf{Sparsity}} & \multicolumn{2}{c}{\textbf{Two-Steps Method}} & \multicolumn{2}{c}{\textbf{One-Step Method}} \\
\cmidrule(lr){2-3}\cmidrule(lr){4-5}
& Interpolation   & Extrapolation   & Interpolation   & Extrapolation   \\
\midrule
0.0 & \textbf{0.000 $\pm$ 0.000} & 0.203 $\pm$ 0.000 & 0.168 $\pm$ 0.000 & \textbf{0.224 $\pm$ 0.000} \\
0.5 & 0.360 $\pm$ 0.285 & 1.451 $\pm$ 1.290 & \textbf{0.072 $\pm$ 0.069} & \textbf{0.214 $\pm$ 0.082} \\
0.6 & 0.920 $\pm$ 0.796 & 5.396 $\pm$ 7.568 & \textbf{0.037 $\pm$ 0.010} & \textbf{0.194 $\pm$ 0.081} \\
0.7 & 15.480 $\pm$ 12.938 & 54.705 $\pm$ 57.873 & \textbf{1.287 $\pm$ 2.688} & \textbf{5.146 $\pm$ 11.018} \\
0.8 & 42.990 $\pm$ 20.771 & 122.531 $\pm$ 43.043 & \textbf{7.603 $\pm$ 9.835} & \textbf{17.316 $\pm$ 23.083} \\
0.9 & 103.826 $\pm$ 22.026 & 130.073 $\pm$ 18.070 & \textbf{58.210 $\pm$ 32.230} & \textbf{79.855 $\pm$ 25.081} \\
\bottomrule
\end{tabular}
\caption*{Gaussian kernel}
\end{subtable} \\[1.5ex]

\begin{subtable}{\textwidth}
\centering
\begin{tabular}{lcccc}
\toprule
\multirow{2}{*}{\textbf{Sparsity}} & \multicolumn{2}{c}{\textbf{Two-Steps Method}} & \multicolumn{2}{c}{\textbf{One-Step Method}} \\
\cmidrule(lr){2-3}\cmidrule(lr){4-5}
& Interpolation   & Extrapolation   & Interpolation   & Extrapolation   \\
\midrule
0.0 & \textbf{0.000 $\pm$ 0.000} & 0.199 $\pm$ 0.000 & 0.011 $\pm$ 0.000 & \textbf{0.157 $\pm$ 0.000} \\
0.5 & 0.360 $\pm$ 0.285 & 1.545 $\pm$ 1.548 & \textbf{0.030 $\pm$ 0.016} & \textbf{0.218 $\pm$ 0.126} \\
0.6 & 0.920 $\pm$ 0.796 & 5.811 $\pm$ 8.604 & \textbf{0.061 $\pm$ 0.052} & \textbf{0.300 $\pm$ 0.211} \\
0.7 & 15.480 $\pm$ 12.938 & 71.540 $\pm$ 57.892 & \textbf{0.104 $\pm$ 0.053} & \textbf{0.356 $\pm$ 0.181} \\
0.8 & 42.990 $\pm$ 20.771 & 134.576 $\pm$ 35.819 & \textbf{0.156 $\pm$ 0.072} & \textbf{0.401 $\pm$ 0.231} \\
\bottomrule
\end{tabular}
\caption*{Separable polynomial kernel}
\end{subtable}
\end{tabular}

\caption{Relative errors (mean $\pm$ std) for $q$ in the Two-Mass-Three-Spring system.}
\label{tab:m2s3-q}
\end{table}

As in the previous example (Table~\ref{tab:m2s3-q}), especially in the forecasting regime, increasing sparsity shows that the 1-step method outperforms the 2-step method.  
The only caveat is that due to numerical instabilities at sparsity $0.9$ with the polynomial kernel (specifically matrix inversion issues), the tables only report up to sparsity $0.8$ for this kernel. 

\subsubsection{Error plots}

The forecasting error plots in Figure~\ref{fig:for_error_m2s3_all} show a trend similar to the previous system: starting from sparsity $0.6$, the 2-step method’s error grows significantly compared to the 1-step method.

\begin{figure}[htbp]
\centering
\begin{subfigure}{\textwidth}
    \centering
    \begin{subfigure}{0.36\textwidth}
        \includegraphics[width=\linewidth]{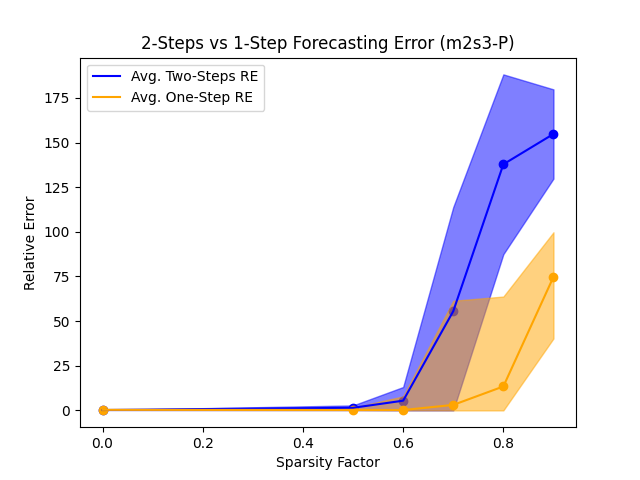}
        \caption*{$p$-error, Gaussian}
    \end{subfigure}
    \begin{subfigure}{0.36\textwidth}
        \includegraphics[width=\linewidth]{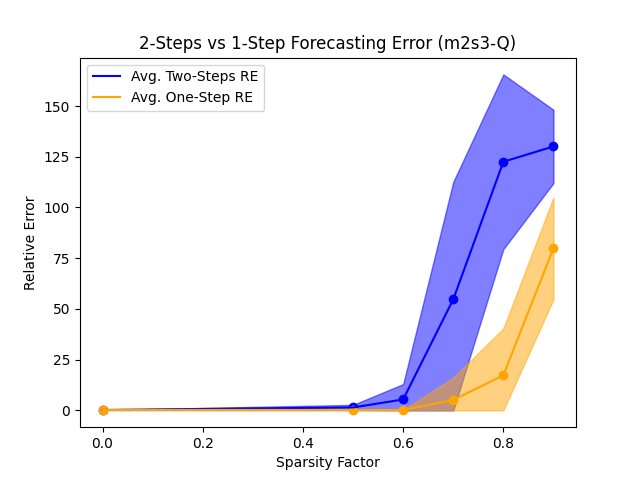}
        \caption*{$q$-error, Gaussian}
    \end{subfigure}
\end{subfigure}

\begin{subfigure}{\textwidth}
    \centering
    \begin{subfigure}{0.36\textwidth}
        \includegraphics[width=\linewidth]{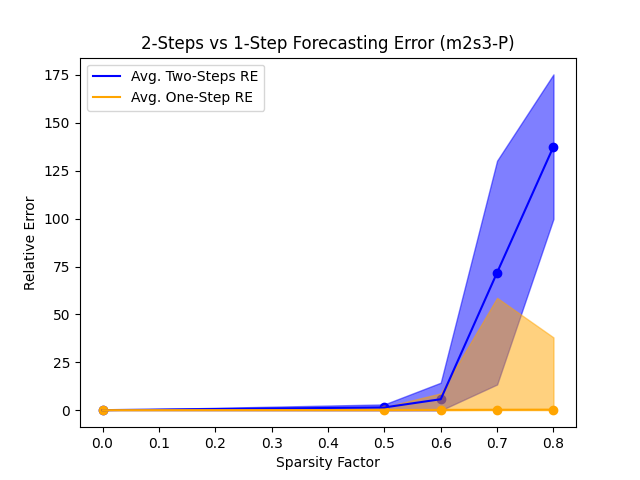}
        \caption*{$p$-error, Separable Polynomial}
    \end{subfigure}
    \begin{subfigure}{0.36\textwidth}
        \includegraphics[width=\linewidth]{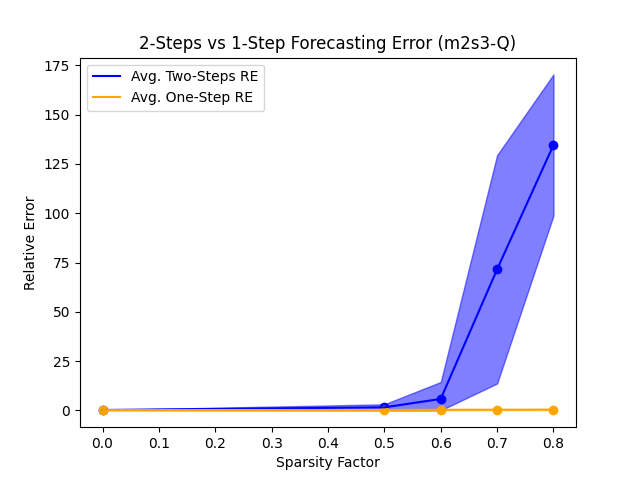}
        \caption*{$q$-error, Separable Polynomial}
    \end{subfigure}
\end{subfigure}

\caption{Forecasting relative errors for the Two-Mass-Three-Spring system.}
\label{fig:for_error_m2s3_all}
\end{figure}

\subsection{Hénon-Heiles System}

As our last example, we consider the Hénon-Heiles system, a well-known two-dimensional nonlinear chaotic Hamiltonian system. Its Hamiltonian is given by
\begin{align*}
H(q_1, q_2, p_1, p_2) = \frac{1}{2}(p_1^2 + p_2^2) + \frac{1}{2}(q_1^2 + q_2^2) + q_1^2 q_2 - \frac{1}{3}q_2^3.  
\end{align*}
We fix the initial condition as $y(t_0) = (0.2, -0.1, 0.1, -0.1)$.  
The 1-step method is able to recover the dynamics and forecast future behavior with reasonable accuracy, even in the presence of chaotic features. Naturally, the prediction horizon is limited due to the system’s sensitivity to initial conditions.

\subsubsection{Trajectory recovery plots}
Figures~\ref{fig:recovery_hh_q1} and~\ref{fig:recovery_hh_q2} display the recovered trajectories for $q_1$ and $q_2$, respectively, under sparsity factors $0.5$ and $0.8$. We showed these two here as $q_2$ here is of order 3 and reported the plots for the rest of variables in \ref{SM: plots}.
At moderate sparsity ($0.5$), both methods perform reasonably well.  
At higher sparsity ($0.8$), the 1-step method continues to follow the true dynamics more closely, while the 2-step method deviates significantly, particularly in the forecasting region.

\begin{figure}[htbp]
\centering
\begin{subfigure}{0.36\textwidth}
\includegraphics[width=\linewidth]{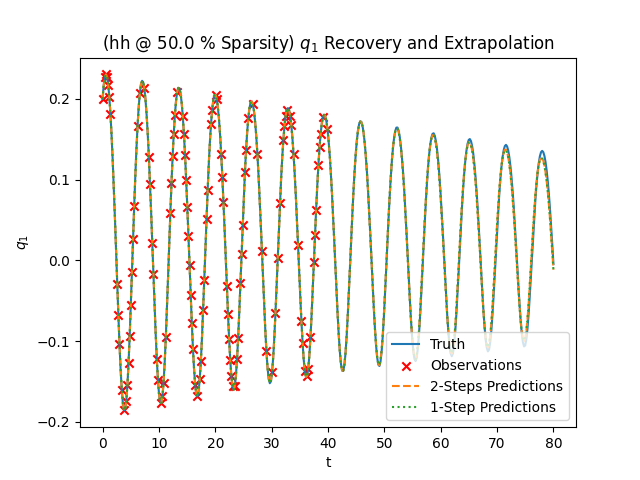}
\caption*{Gaussian 0.5}
\end{subfigure}
\begin{subfigure}{0.36\textwidth}
\includegraphics[width=\linewidth]{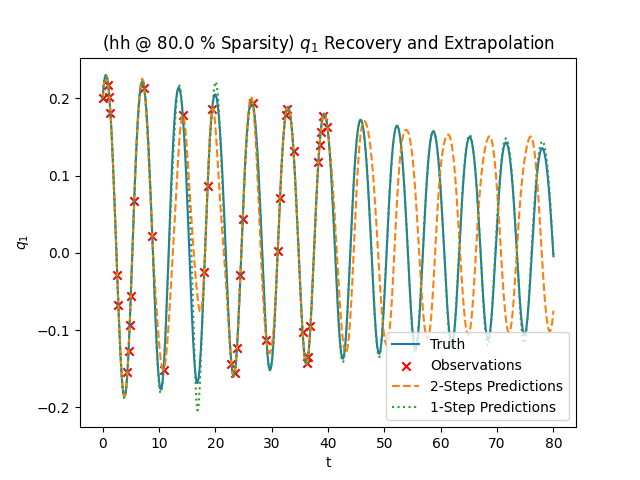}
\caption*{Gaussian 0.8}
\end{subfigure}
\begin{subfigure}{0.36\textwidth}
\includegraphics[width=\linewidth]{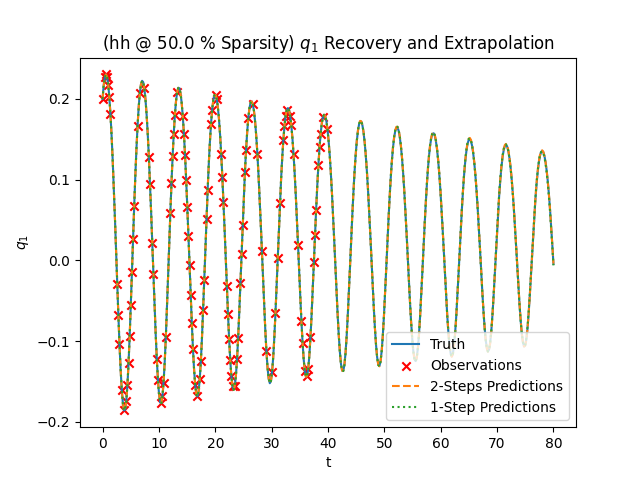}
\caption*{Poly 0.5}
\end{subfigure}
\begin{subfigure}{0.36\textwidth}
\includegraphics[width=\linewidth]{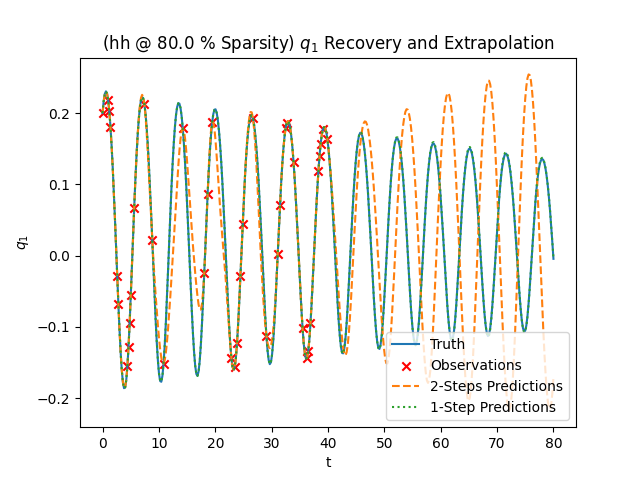}
\caption*{Poly 0.8}
\end{subfigure}
\caption{Recovery of $q_1$ in the Hénon-Heiles system.}
\label{fig:recovery_hh_q1}
\end{figure}

\begin{figure}[htbp]
\centering
\begin{subfigure}{0.36\textwidth}
\includegraphics[width=\linewidth]{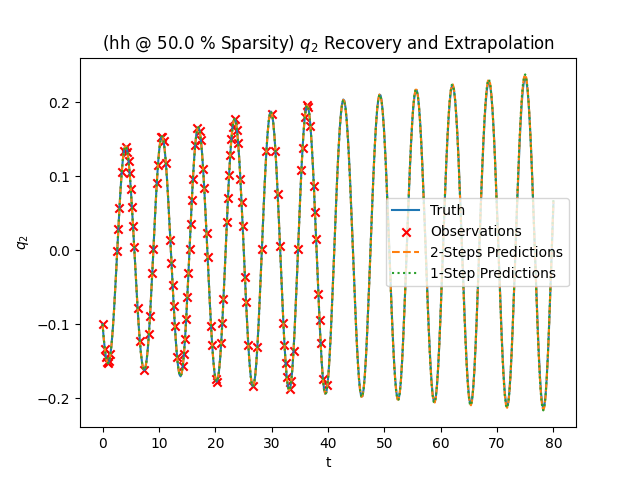}
\caption*{Gaussian 0.5}
\end{subfigure}
\begin{subfigure}{0.36\textwidth}
\includegraphics[width=\linewidth]{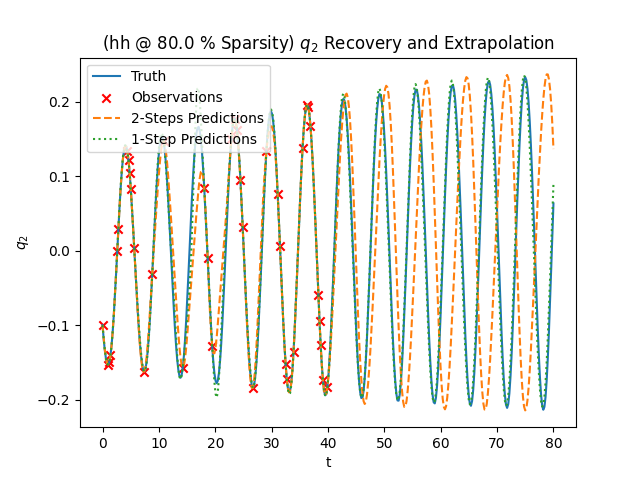}
\caption*{Gaussian 0.8}
\end{subfigure}
\begin{subfigure}{0.36\textwidth}
\includegraphics[width=\linewidth]{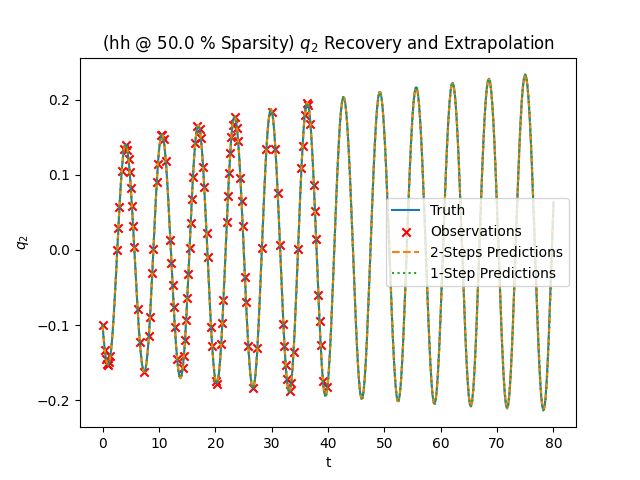}
\caption*{Poly 0.5}
\end{subfigure}
\begin{subfigure}{0.36\textwidth}
\includegraphics[width=\linewidth]{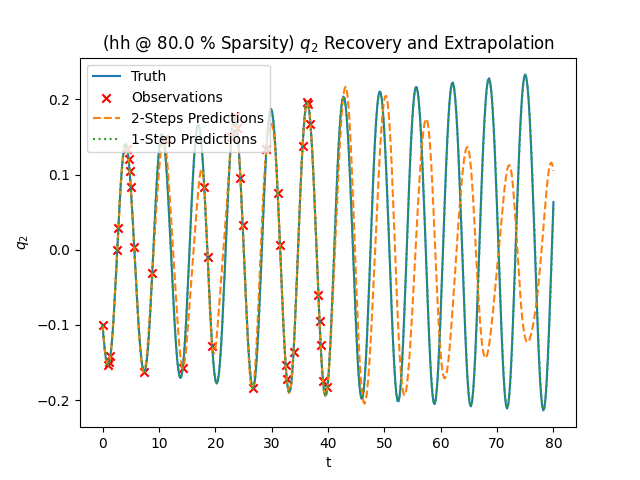}
\caption*{Poly 0.8}
\end{subfigure}
\caption{Recovery of $q_2$ in the Hénon-Heiles system.}
\label{fig:recovery_hh_q2}
\end{figure}

\subsubsection{Error tables}
Table~\ref{tab:hh-q} reports the relative errors for $q$.  
Interpolation with the 1-step method is significantly more accurate than with the 2-step method, especially for sparsities $0.5$--$0.7$.  
Extrapolation shows the same trend: the 1-step method consistently yields smaller errors, though both methods exhibit large errors at high sparsity.  
Polynomial kernels mitigate extrapolation blow-up more effectively than Gaussian kernels.

\begin{table}[ht]
\centering
\setlength{\tabcolsep}{12pt} 
\renewcommand{\arraystretch}{1.2} 
\scriptsize

\begin{tabular}{c} 
\begin{subtable}{\textwidth}
\centering
\begin{tabular}{lcccc}
\toprule
\multirow{2}{*}{\textbf{Sparsity}} & \multicolumn{2}{c}{\textbf{Two-Steps Method}} & \multicolumn{2}{c}{\textbf{One-Step Method}} \\
\cmidrule(lr){2-3}\cmidrule(lr){4-5}
& Interpolation   & Extrapolation   & Interpolation   & Extrapolation   \\
\midrule
0.0 & \textbf{0.000 $\pm$ 0.000} & 2.977 $\pm$ 0.000 & 0.002 $\pm$ 0.000 & \textbf{2.988 $\pm$ 0.000} \\
0.5 & 0.103 $\pm$ 0.097 & 2.842 $\pm$ 0.371 & \textbf{0.022 $\pm$ 0.015} & 3.139 $\pm$ 0.158 \\
0.6 & 0.303 $\pm$ 0.197 & 3.600 $\pm$ 1.511 & \textbf{0.040 $\pm$ 0.022} & \textbf{3.141 $\pm$ 0.333} \\
0.7 & 5.123 $\pm$ 4.006 & 17.517 $\pm$ 14.945 & \textbf{1.486 $\pm$ 3.676} & \textbf{8.332 $\pm$ 10.159} \\
0.8 & 22.769 $\pm$ 14.063 & 64.279 $\pm$ 40.605 & \textbf{7.272 $\pm$ 8.807} & \textbf{21.508 $\pm$ 27.138} \\
0.9 & 69.056 $\pm$ 20.458 & 135.651 $\pm$ 13.299 & \textbf{28.934 $\pm$ 20.882} & \textbf{46.749 $\pm$ 29.029} \\
\bottomrule
\end{tabular}
\caption*{Gaussian kernel}
\end{subtable}\\[1.5ex]

\begin{subtable}{\textwidth}
\centering
\begin{tabular}{lcccc}
\toprule
\multirow{2}{*}{\textbf{Sparsity}} & \multicolumn{2}{c}{\textbf{Two-Steps Method}} & \multicolumn{2}{c}{\textbf{One-Step Method}} \\
\cmidrule(lr){2-3}\cmidrule(lr){4-5}
& Interpolation   & Extrapolation   & Interpolation   & Extrapolation   \\
\midrule
0.0 & \textbf{0.000 $\pm$ 0.000} & 1.145 $\pm$ 0.000 & 0.002 $\pm$ 0.000 & \textbf{1.145 $\pm$ 0.000} \\
0.5 & 0.103 $\pm$ 0.097 & 1.790 $\pm$ 0.747 & \textbf{0.013 $\pm$ 0.007} & \textbf{1.252 $\pm$ 0.108} \\
0.6 & 0.303 $\pm$ 0.197 & 3.190 $\pm$ 2.567 & \textbf{0.028 $\pm$ 0.024} & \textbf{1.197 $\pm$ 0.057} \\
0.7 & 5.123 $\pm$ 4.006 & 17.339 $\pm$ 17.574 & \textbf{0.036 $\pm$ 0.022} & \textbf{1.360 $\pm$ 0.199} \\
0.8 & 22.769 $\pm$ 14.063 & 74.536 $\pm$ 43.707 & \textbf{0.059 $\pm$ 0.031} & \textbf{1.434 $\pm$ 0.217} \\
0.9 & 69.056 $\pm$ 20.458 & 144.810 $\pm$ 12.041 & \textbf{0.117 $\pm$ 0.095} & \textbf{1.856 $\pm$ 0.528} \\
\bottomrule
\end{tabular}
\caption*{Separable polynomial kernel}
\end{subtable}
\end{tabular}

\caption{Relative errors (mean $\pm$ std) for $q$ in the Hénon-Heiles system.}
\label{tab:hh-q}
\end{table}

\subsubsection{Error plots}
Figure~\ref{fig:for_error_hh_all} shows the forecasting relative errors for both $q$ and $p$.  
Once again, the 1-step method maintains lower errors across sparsity levels, while the 2-step method degrades rapidly beyond sparsity $0.6$.  
The separable polynomial kernel provides more stable performance in the extrapolation regime compared to the Gaussian kernel.

\begin{figure}[htbp!]
\centering

\begin{subfigure}{\textwidth}
    \centering
    \begin{subfigure}{0.36\textwidth}
        \includegraphics[width=\linewidth]{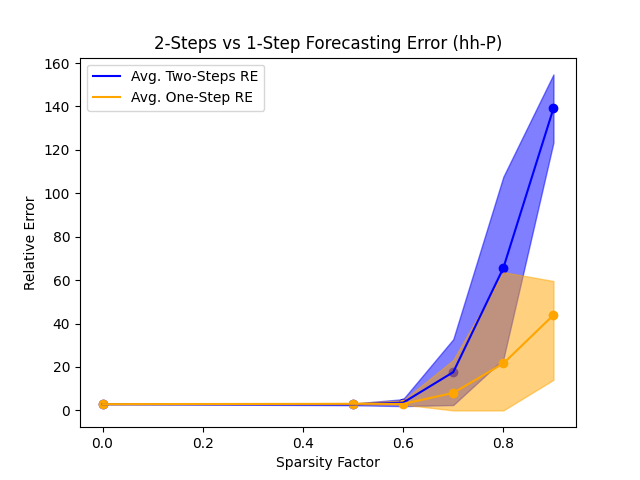}
        \caption*{$p$-error, Gaussian}
    \end{subfigure}
    \begin{subfigure}{0.36\textwidth}
        \includegraphics[width=\linewidth]{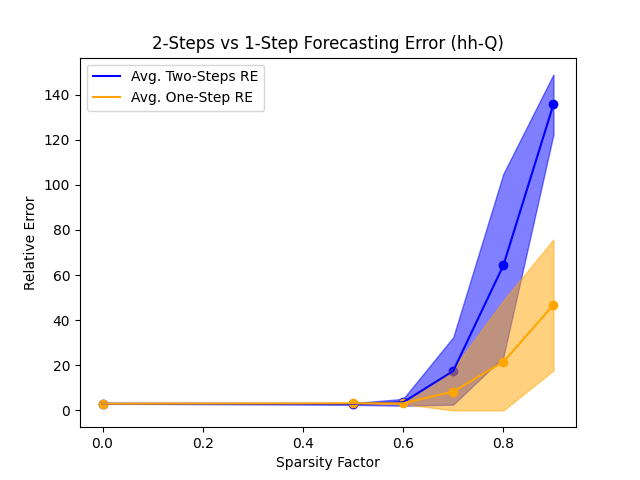}
        \caption*{$q$-error, Gaussian}
    \end{subfigure}
\end{subfigure}

\begin{subfigure}{\textwidth}
    \centering
    \begin{subfigure}{0.36\textwidth}
        \includegraphics[width=\linewidth]{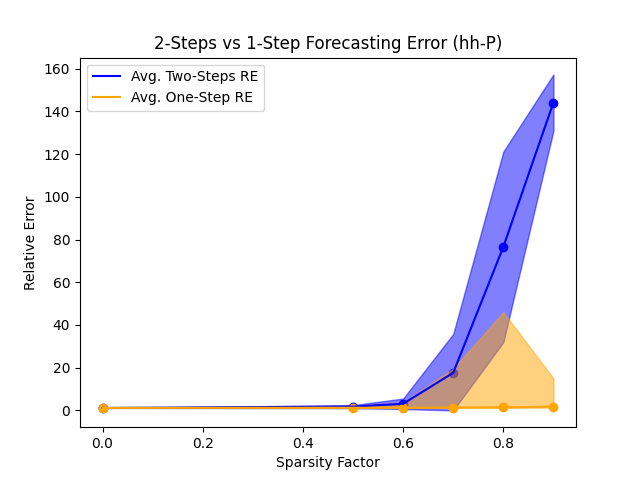}
        \caption*{$p$-error, Separable Polynomial}
    \end{subfigure}
    \begin{subfigure}{0.36\textwidth}
        \includegraphics[width=\linewidth]{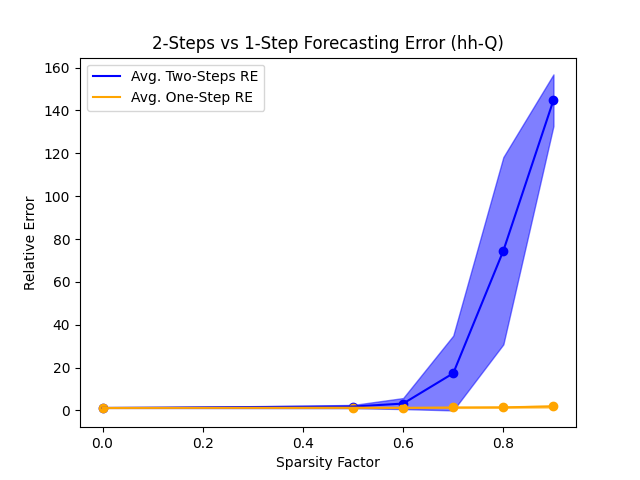}
        \caption*{$q$-error, Separable Polynomial}
    \end{subfigure}
\end{subfigure}

\caption{Forecasting relative errors for the Hénon-Heiles System.}
\label{fig:for_error_hh_all}
\end{figure}

\subsection{Nonlinear Pendulum}

For our next example, we consider a one-dimensional nonlinear system governed by the Hamiltonian
\begin{align*}
    H(q,p) = \frac{1}{2}p^2 - \cos(q),
\end{align*}
with initial condition $y(t_0) = (0.95\pi, 0.0)$.  
This system exhibits nonlinear oscillations and highlights the capability of our method to handle non-polynomial Hamiltonians. 

\subsubsection{Trajectory recovery plots}
Unlike the previous examples, here the recovery behavior differs between $q$ and $p$. Therefore we report both variables in the main body (Figures~\ref{fig:recovery_np_q} and~\ref{fig:recovery_np_p}).  
Despite the difference, once again, at moderate sparsity ($0.5$), both the 1-step and 2-step methods provide good recovery for $q$ and $p$; at higher sparsity ($0.8$), however, the 1-step method remains stable and continues to track the true dynamics more accurately, while the 2-step method shows larger deviations, particularly in the forecasting region.  

\begin{figure}[htbp]
    \centering
    \begin{subfigure}{0.36\textwidth}
        \includegraphics[width=\linewidth]{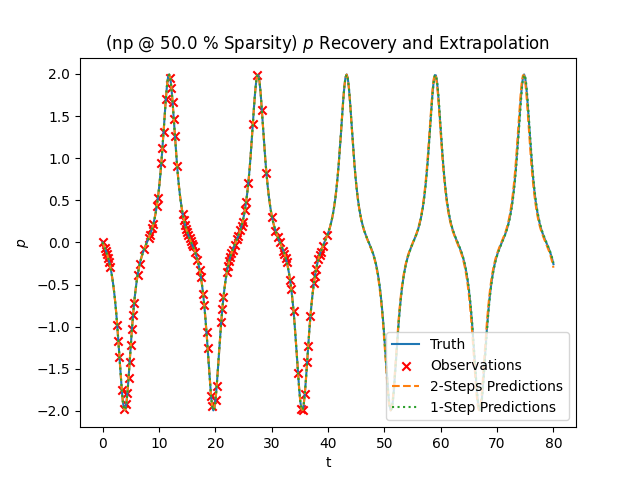}
        \caption*{Gaussian 0.5}
    \end{subfigure}
    \begin{subfigure}{0.36\textwidth}
        \includegraphics[width=\linewidth]{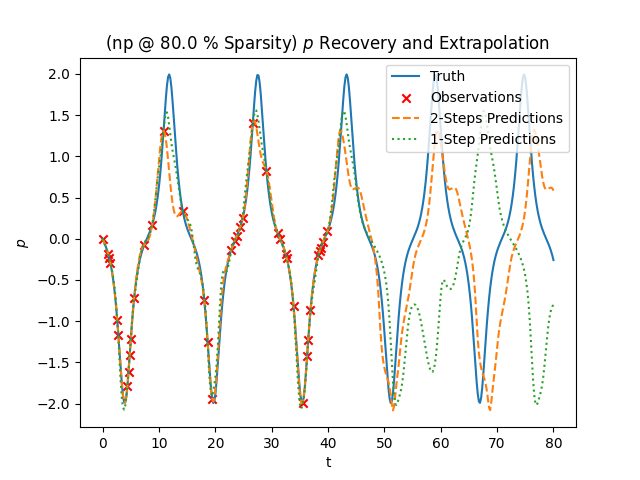}
        \caption*{Gaussian 0.8}
    \end{subfigure}
    \begin{subfigure}{0.36\textwidth}
        \includegraphics[width=\linewidth]{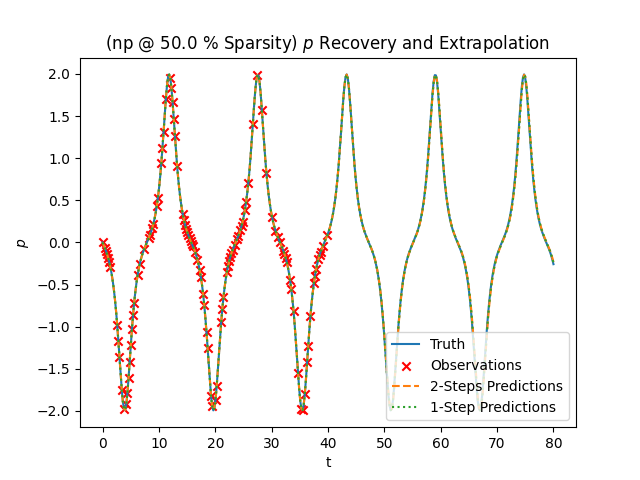}
        \caption*{Additive 0.5}
    \end{subfigure}
    \begin{subfigure}{0.36\textwidth}
        \includegraphics[width=\linewidth]{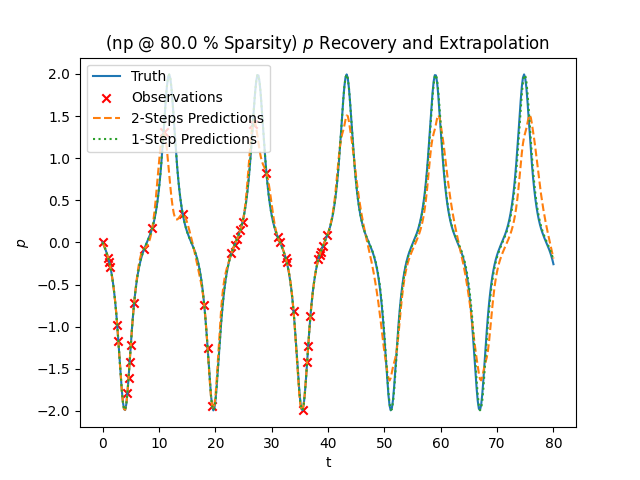}
        \caption*{Additive 0.8}
    \end{subfigure}
    \caption{Recovery of $p$ in the Nonlinear Pendulum system.}
    \label{fig:recovery_np_p}
\end{figure}

\begin{figure}[htbp]
    \centering
    \begin{subfigure}{0.36\textwidth}
        \includegraphics[width=\linewidth]{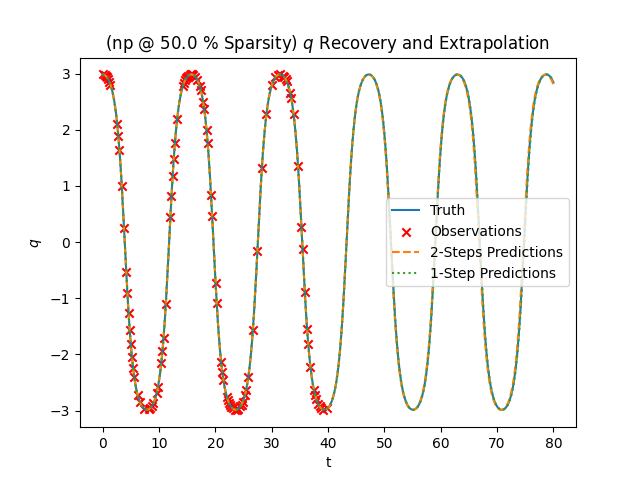}
        \caption*{Gaussian 0.5}
    \end{subfigure}
    \begin{subfigure}{0.36\textwidth}
        \includegraphics[width=\linewidth]{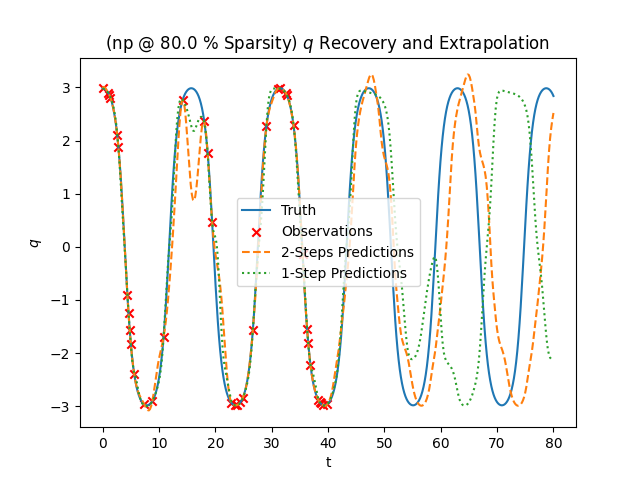}
        \caption*{Gaussian 0.8}
    \end{subfigure}
    
    \begin{subfigure}{0.36\textwidth}
        \includegraphics[width=\linewidth]{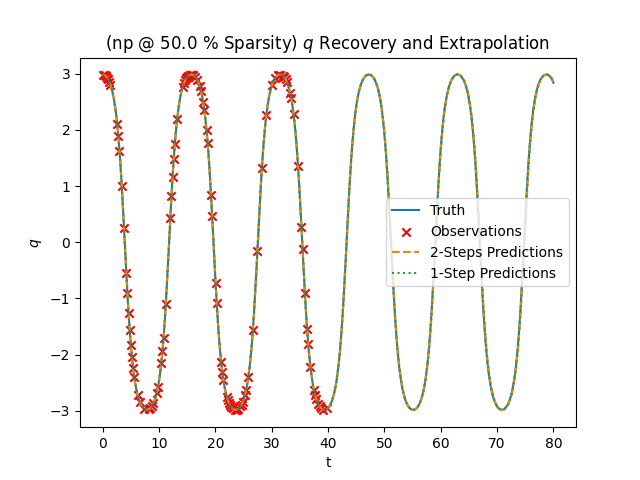}
        \caption*{Additive 0.5}
    \end{subfigure}
    \begin{subfigure}{0.36\textwidth}
        \includegraphics[width=\linewidth]{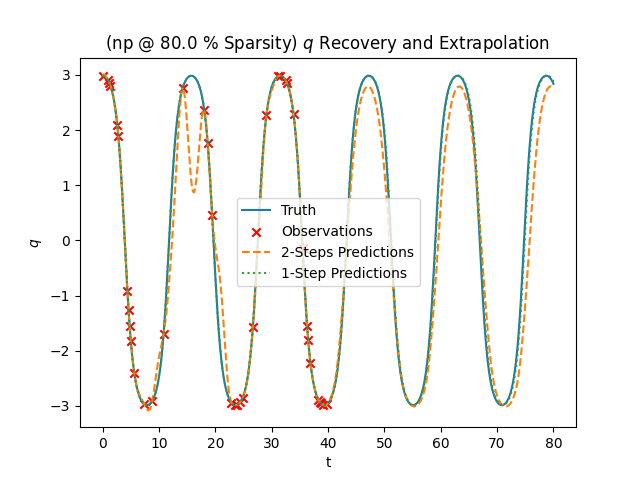}
        \caption*{Additive 0.8}
    \end{subfigure}

    \caption{Recovery of $q$ in the Nonlinear Pendulum system.}
    \label{fig:recovery_np_q}
\end{figure}

\subsubsection{Error tables}
Tables~\ref{tab:np-q} and~\ref{tab:np-p} report the relative errors for $q$ and $p$, respectively.  
In both interpolation and extrapolation, the 1-step method consistently outperforms the 2-step method.  
For the Gaussian kernel, extrapolation errors increase sharply at high sparsity.  
In contrast, the additive polynomial + Gaussian kernel provides more stable performance in the extrapolation regime.  

\begin{table}[ht]
\centering
\setlength{\tabcolsep}{12pt} 
\renewcommand{\arraystretch}{1.2} 
\scriptsize

\begin{tabular}{c} 

\begin{subtable}{\textwidth}
\centering
\begin{tabular}{lcccc}
\toprule
\multirow{2}{*}{\textbf{Sparsity}} & \multicolumn{2}{c}{\textbf{Two-Steps Method}} & \multicolumn{2}{c}{\textbf{One-Step Method}} \\
\cmidrule(lr){2-3}\cmidrule(lr){4-5}
& Interpolation   & Extrapolation   & Interpolation   & Extrapolation   \\
\midrule
0.0 & \textbf{0.000 $\pm$ 0.000} & 0.234 $\pm$ 0.000 & 0.008 $\pm$ 0.000 & \textbf{1.872 $\pm$ 0.000} \\
0.5 & 0.182 $\pm$ 0.148 & 2.549 $\pm$ 3.069 & \textbf{0.035 $\pm$ 0.046} & \textbf{1.136 $\pm$ 0.980} \\
0.6 & 0.599 $\pm$ 0.575 & 5.061 $\pm$ 5.205 & \textbf{0.085 $\pm$ 0.120} & \textbf{2.062 $\pm$ 1.948} \\
0.7 & 5.481 $\pm$ 4.974 & 45.986 $\pm$ 48.646 & \textbf{1.013 $\pm$ 1.528} & \textbf{14.267 $\pm$ 38.004} \\
0.8 & 15.302 $\pm$ 10.281 & 102.954 $\pm$ 47.515 & \textbf{9.592 $\pm$ 11.503} & \textbf{74.506 $\pm$ 67.847} \\
0.9 & 59.719 $\pm$ 16.933 & 127.130 $\pm$ 22.565 & \textbf{33.419 $\pm$ 16.313} & \textbf{97.625 $\pm$ 54.474} \\
\bottomrule
\end{tabular}
\caption*{Gaussian kernel}
\end{subtable}\\[1.5ex]

\begin{subtable}{\textwidth}
\centering
\begin{tabular}{lcccc}
\toprule
\multirow{2}{*}{\textbf{Sparsity}} & \multicolumn{2}{c}{\textbf{Two-Steps Method}} & \multicolumn{2}{c}{\textbf{One-Step Method}} \\
\cmidrule(lr){2-3}\cmidrule(lr){4-5}
& Interpolation   & Extrapolation   & Interpolation   & Extrapolation   \\
\midrule
0.0 & \textbf{0.000 $\pm$ 0.000} & 0.228 $\pm$ 0.000 & 0.011 $\pm$ 0.000 & \textbf{1.890 $\pm$ 0.000} \\
0.5 & 0.182 $\pm$ 0.148 & 8.016 $\pm$ 7.455 & \textbf{0.013 $\pm$ 0.003} & \textbf{1.046 $\pm$ 0.529} \\
0.6 & 0.599 $\pm$ 0.575 & 32.859 $\pm$ 20.962 & \textbf{0.014 $\pm$ 0.004} & \textbf{1.337 $\pm$ 0.827} \\
0.7 & 5.481 $\pm$ 4.974 & 81.800 $\pm$ 46.374 & \textbf{0.025 $\pm$ 0.024} & \textbf{2.192 $\pm$ 2.705} \\
0.8 & 15.302 $\pm$ 10.281 & 80.487 $\pm$ 41.416 & \textbf{0.051 $\pm$ 0.069} & \textbf{3.056 $\pm$ 2.150} \\
0.9 & 59.719 $\pm$ 16.933 & 122.773 $\pm$ 29.940 & \textbf{25.260 $\pm$ 15.747} & \textbf{102.912 $\pm$ 53.001} \\
\bottomrule
\end{tabular}
\caption*{Additive polynomial + Gaussian kernel}
\end{subtable}
\end{tabular}

\caption{Relative errors (mean $\pm$ std) for $q$ in the Nonlinear Pendulum system.}
\label{tab:np-q}
\end{table}

\begin{table}[ht]
\centering
\setlength{\tabcolsep}{12pt} 
\renewcommand{\arraystretch}{1.2} 
\scriptsize

\begin{tabular}{c} 
\begin{subtable}{\textwidth}
\centering
\begin{tabular}{lcccc}
\toprule
\multirow{2}{*}{\textbf{Sparsity}} & \multicolumn{2}{c}{\textbf{Two-Steps Method}} & \multicolumn{2}{c}{\textbf{One-Step Method}} \\
\cmidrule(lr){2-3}\cmidrule(lr){4-5}
& Interpolation & Extrapolation & Interpolation   & Extrapolation \\
\midrule
0.0 & \textbf{0.000 $\pm$ 0.000} & 0.336 $\pm$ 0.000 & 0.014 $\pm$ 0.000 & \textbf{2.655 $\pm$ 0.000} \\
0.5 & 0.267 $\pm$ 0.188 & 3.583 $\pm$ 4.310 & \textbf{0.101 $\pm$ 0.091} & \textbf{1.610 $\pm$ 1.390} \\
0.6 & 0.824 $\pm$ 0.627 & 7.179 $\pm$ 7.248 & \textbf{0.144 $\pm$ 0.120} & \textbf{2.922 $\pm$ 2.757} \\
0.7 & 6.008 $\pm$ 5.373 & 48.595 $\pm$ 46.310 & \textbf{1.412 $\pm$ 2.069} & \textbf{14.468 $\pm$ 36.524} \\
0.8 & 23.172 $\pm$ 22.818 & 100.743 $\pm$ 43.798 & \textbf{17.445 $\pm$ 17.640} & \textbf{71.424 $\pm$ 60.946} \\
0.9 & 65.326 $\pm$ 36.965 & 239.710 $\pm$ 329.837 & \textbf{46.574 $\pm$ 18.174} & \textbf{105.368 $\pm$ 40.820} \\
\bottomrule
\end{tabular}
\caption*{Gaussian kernel}
\end{subtable} \\[1.5ex]

\begin{subtable}{\textwidth}
\centering
\begin{tabular}{lcccc}
\toprule
\multirow{2}{*}{\textbf{Sparsity}} & \multicolumn{2}{c}{\textbf{Two-Steps Method}} & \multicolumn{2}{c}{\textbf{One-Step Method}} \\
\cmidrule(lr){2-3}\cmidrule(lr){4-5}
& Interpolation & Extrapolation & Interpolation   & Extrapolation \\
\midrule
0.0 & \textbf{0.000 $\pm$ 0.000} & 0.324 $\pm$ 0.000 & 0.024 $\pm$ 0.000 & \textbf{2.681 $\pm$ 0.000} \\
0.5 & 0.267 $\pm$ 0.188 & 11.231 $\pm$ 10.373 & \textbf{0.049 $\pm$ 0.017} & \textbf{1.484 $\pm$ 0.752} \\
0.6 & 0.824 $\pm$ 0.627 & 42.634 $\pm$ 25.174 & \textbf{0.065 $\pm$ 0.020} & \textbf{1.898 $\pm$ 1.174} \\
0.7 & 6.008 $\pm$ 5.373 & 85.578 $\pm$ 41.625 & \textbf{0.093 $\pm$ 0.046} & \textbf{3.108 $\pm$ 3.831} \\
0.8 & 23.172 $\pm$ 22.818 & 88.899 $\pm$ 39.267 & \textbf{0.182 $\pm$ 0.171} & \textbf{4.336 $\pm$ 3.048} \\
0.9 & 65.326 $\pm$ 36.965 & 140.867 $\pm$ 32.271 & \textbf{37.796 $\pm$ 23.027} & \textbf{102.901 $\pm$ 43.392} \\
\bottomrule
\end{tabular}
\caption*{Additive polynomial + Gaussian kernel}
\end{subtable}

\end{tabular}

\caption{Relative errors (mean $\pm$ std) for $p$ in the Nonlinear Pendulum system.}
\label{tab:np-p}
\end{table}

\subsubsection{Error plots}
Figure~\ref{fig:for_error_np_all} shows the forecasting relative errors for $q$ and $p$. Here again, the 1-step method maintains low errors across all sparsity levels, whereas the 2-step method shows rapidly increasing errors beyond sparsity $0.6$.  
This further highlights the robustness of the 1-step approach under sparse data conditions.

\begin{figure}[htbp]
\centering

\begin{subfigure}{\textwidth}
    \centering
    \begin{subfigure}{0.36\textwidth}
        \includegraphics[width=\linewidth]{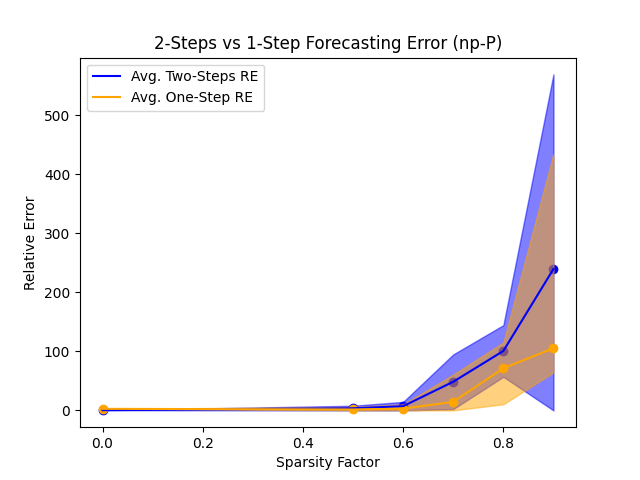}
        \caption*{$p$-error, Gaussian}
    \end{subfigure}
    \begin{subfigure}{0.36\textwidth}
        \includegraphics[width=\linewidth]{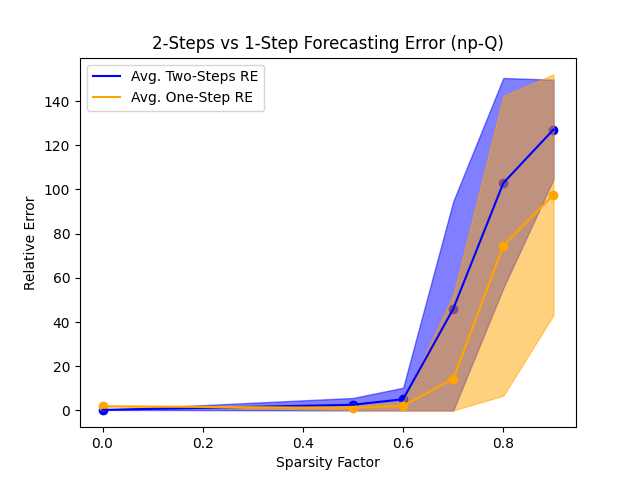}
        \caption*{$q$-error, Gaussian}
    \end{subfigure}
\end{subfigure}

\begin{subfigure}{\textwidth}
    \centering
    \begin{subfigure}{0.36\textwidth}
        \includegraphics[width=\linewidth]{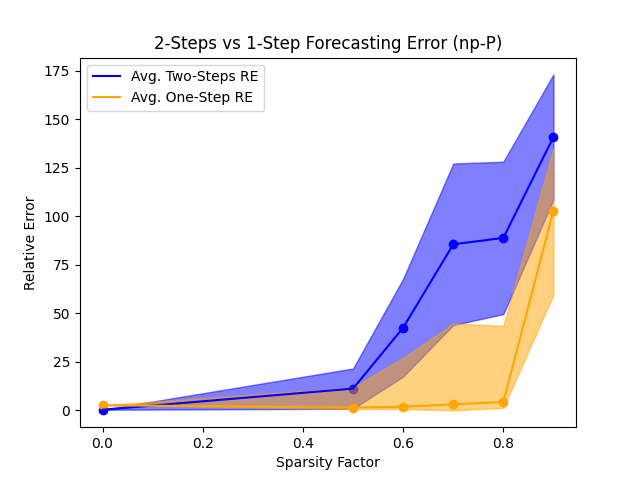}
        \caption*{$p$-error, Separable Polynomial}
    \end{subfigure}
    \begin{subfigure}{0.36\textwidth}
        \includegraphics[width=\linewidth]{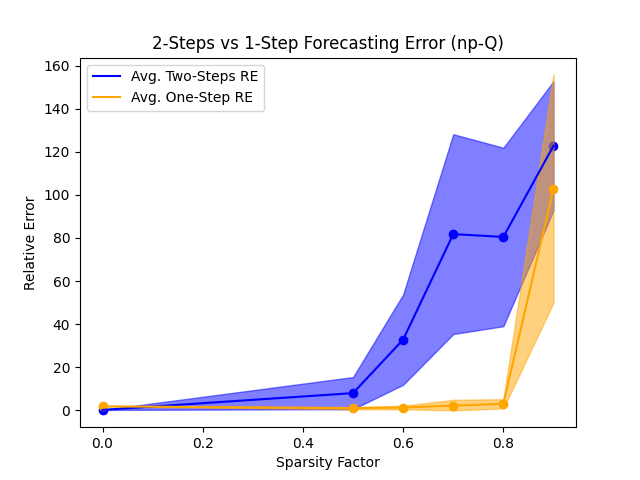}
        \caption*{$q$-error, Separable Polynomial}
    \end{subfigure}
\end{subfigure}

\caption{Forecasting relative error  for the Nonlinear Pendulum System.}
\label{fig:for_error_np_all}
\end{figure}

\section{Concluding remarks}

In this work, we developed kernel-based methodologies, inspired by the CGC and KEqL frameworks \cite{houman_cgc, jalalian2025dataefficientkernelmethodslearning}, for learning Hamiltonian dynamical systems from time-series data, addressing both system identification and forecasting tasks. We introduced a principled approach to approximate unknown trajectories and Hamiltonians using RKHS theory. Our results show that the simultaneous learning method consistently outperforms the standard kernel based akin to our two-step approach, particularly in data-scarce regimes, while automatically preserving the geometric structure of Hamiltonian systems. We also established theoretical error rates that ensure the reliability of the learned models. In addition, the implementation we provide is general and can be adapted to other CGC \cite{houman_cgc} problems beyond Hamiltonian dynamics. Taken together, the interpretability, theoretical guarantees, and structure-preserving properties of our methods make them a compelling alternative to existing machine learning approaches. 

Future directions include extending this framework to larger-scale and higher-dimensional Hamiltonian systems, and developing adaptive kernel designs that automatically adjust to the specific characteristics of the underlying system to improve data efficiency and scalability.

\section{Acknowledgment}
The authors acknowledge support from Beyond Limits (Learning Optimal Models). 
YJ and HO acknowledge support from the Air Force Office of Scientific Research under MURI awards number FA9550-20-1-0358 (Machine Learning and Physics-Based Modeling and Simulation), FOA-AFRL-AFOSR-2023-0004 (Mathematics of Digital Twins), the Department of Energy under award number DE-SC0023163 (SEA-CROGS: Scalable, Efficient, and Accelerated Causal Reasoning Operators, Graphs and Spikes for Earth and Embedded Systems), the National Science Foundations under award number 2425909 (Discovering the Law of Stress Transfer and Earthquake Dynamics in a Fault Network using a Computational Graph Discovery Approach), and the Jet Propulsion Laboratory 
PDRDF 24AW0133 (Jupiter’s Radiation Environment: Assimilating Data with ML-driven Approaches). Additionally, HO acknowledges support from the DoD Vannevar Bush Faculty Fellowship Program.

\bibliographystyle{plain}
\bibliography{references_hamiltonian}

\appendix

\section{Background Results for the Theory}\label{app:theory}
In this section, we provide results used in the proof of theorem \ref{thm:rates}, which also justify the need for some of the assumptions needed for the theorem to hold in section \ref{sec:theory}. We assume the reader is familiar with classical results in kernel methods and RKHS theory, as well as classical results of functional analysis. The results in this appendix are classical in approximation theory with kernel methods, but the formulations are adapted from \cite{jalalian2025dataefficientkernelmethodslearning}.

\subsection{Nested RKHS}

\begin{proposition}\label{prop:Mercers-thm}
    Suppose $\Omega \subset \R^q$ is bounded and let $K$ be a positive-definite  kernel that is continuous in both of its arguments on $\overline{\Omega}$. 
    Then there exists an orthonormal set of continuous eigenfunctions
    $\{ e_i \}_{i=1}^\infty \subset L^2(\Omega)$ and 
    decreasing eigenvalues
     $\{\lambda_i \}_{i=1}^\infty$, $\lambda_1 \ge \lambda_2 \ge \dots$, 
    such that 
       $ K(x, x') = \sum_{i=1}^\infty \lambda_i e_i(x) e_i(x'),$ and 
  the RKHS $\mH$ can be characterized as 
    \begin{equation}\label{Mercer-RKHS-characterization}
        \mH = \left\{ f: \Omega \to \R \Bigg| f(x) 
        = \sum_{i \in \{ i \mid \lambda_i \neq 0\}} c_i(f)  e_i(x), \quad 
        \sum_{i \in \{i \mid \lambda_i \neq 0 \}} \lambda_i^{-1}c_i(f)^2  < +\infty
        \right\}
    \end{equation}
    and for any pair $f,f' \in \mH$ we have  
    $\langle f, f' \rangle_{\mH} =  \sum_{i \in \{ i \mid \lambda_i \neq 0\} } \lambda_i^{-1} c_i(f) c_i(f')$. 
\end{proposition}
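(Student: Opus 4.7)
The plan is to construct the eigendecomposition via the spectral theorem applied to the integral operator associated with $K$, and then to identify the RKHS by uniqueness. First I would introduce the integral operator $T_K : L^2(\Omega) \to L^2(\Omega)$ defined by $(T_K f)(x) = \int_\Omega K(x,y) f(y) \, dy$. Since $\Omega$ is bounded and $K$ is continuous on the compact set $\overline{\Omega}\times\overline{\Omega}$, we have $K$ bounded, hence $K \in L^2(\Omega\times\Omega)$, so $T_K$ is Hilbert–Schmidt, in particular compact. Symmetry of $K$ (implicit in being positive-definite) yields self-adjointness of $T_K$, and positive-definiteness of $K$ gives $\langle T_K f, f\rangle_{L^2} \ge 0$.

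Next I would invoke the spectral theorem for compact self-adjoint operators to obtain an orthonormal sequence $\{e_i\}\subset L^2(\Omega)$ of eigenfunctions with non-negative eigenvalues $\lambda_1 \ge \lambda_2 \ge \cdots \to 0$, arranged in decreasing order. Continuity of each $e_i$ (for $\lambda_i>0$) then follows from bootstrapping via the identity $e_i(x) = \lambda_i^{-1} (T_K e_i)(x) = \lambda_i^{-1}\int_\Omega K(x,y) e_i(y)\,dy$, since the right-hand side is continuous in $x$ by uniform continuity of $K$ on $\overline{\Omega}\times\overline{\Omega}$ and the dominated convergence theorem.

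The main obstacle is establishing the pointwise (and actually uniform) identity $K(x,x') = \sum_i \lambda_i e_i(x) e_i(x')$. I would define the truncated kernels $K_n(x,x') = \sum_{i=1}^n \lambda_i e_i(x) e_i(x')$ and the remainder $R_n = K - K_n$. One checks $R_n$ is itself a continuous, symmetric, positive-definite kernel (because its integral operator has the discarded non-negative eigenvalues); positive-definiteness on the diagonal gives $R_n(x,x) \ge 0$ and $R_n(x,x') ^2 \le R_n(x,x) R_n(x',x')$ by a kernel Cauchy–Schwarz. Thus it suffices to prove $R_n(x,x) \to 0$ uniformly, and this is where Dini's theorem enters: the sequence $K_n(x,x)$ is monotone non-decreasing, each $K_n(x,x)$ is continuous, and the pointwise limit equals $K(x,x)$ (which is continuous), so monotone pointwise convergence to a continuous function on a compact set upgrades to uniform convergence.

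Finally, for the RKHS characterization, I would define $\widetilde{\mH}$ as the space of functions of the form $f = \sum_i c_i e_i$ with $\sum_i \lambda_i^{-1} c_i^2 < \infty$ (summing only over $\lambda_i \neq 0$), equipped with the inner product $\langle f, g\rangle = \sum_i \lambda_i^{-1} c_i(f) c_i(g)$. One verifies that $\widetilde{\mH}$ is a Hilbert space, that $K_x := K(\cdot,x) = \sum_i \lambda_i e_i(x) e_i \in \widetilde{\mH}$ (the norm is $K(x,x)<\infty$), and that the reproducing property $\langle f, K_x\rangle_{\widetilde{\mH}} = \sum_i \lambda_i^{-1} c_i(f) \cdot \lambda_i e_i(x) = f(x)$ holds. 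By the Moore–Aronszajn uniqueness of the RKHS associated to a positive-definite kernel, $\widetilde{\mH} = \mH$ with matching norms, which is exactly the stated characterization.
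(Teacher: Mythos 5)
The paper does not actually prove this proposition: it is stated in Appendix~\ref{app:theory} as a classical background result (Mercer's theorem together with the Mercer series characterization of the RKHS), with the formulation adapted from \cite{jalalian2025dataefficientkernelmethodslearning}, so there is no in-paper argument to compare against. Your proposal is the standard textbook proof and its overall architecture is sound: Hilbert--Schmidt compactness of $T_K$, the spectral theorem, bootstrapped continuity of the eigenfunctions via $e_i=\lambda_i^{-1}T_Ke_i$, positivity of the truncated remainders plus the kernel Cauchy--Schwarz inequality, Dini's theorem on the diagonal, and finally Moore--Aronszajn uniqueness to identify the explicitly constructed Hilbert space $\widetilde{\mH}$ with $\mH$. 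Two standard hypotheses you use implicitly are worth acknowledging: that pointwise positive-definiteness of the continuous kernel transfers to positivity of the integral operator (and back again for $R_n$), which requires a Riemann-sum or delta-approximation argument, and that $\overline{\Omega}$ is compact so that Dini applies.

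The one place where your sketch asserts rather than proves the crux is the sentence ``the pointwise limit equals $K(x,x)$.'' Before Dini can be invoked you must show that $\sum_i \lambda_i e_i(x)^2$ actually sums to $K(x,x)$ pointwise, and this does not follow from monotonicity alone. The standard fill-in: the diagonal bound $\sum_{i\le n}\lambda_i e_i(x)^2\le K(x,x)$ plus Cauchy--Schwarz shows that $\widetilde K(x,\cdot):=\sum_i\lambda_i e_i(x)e_i(\cdot)$ converges uniformly in the second variable for each fixed $x$, hence is continuous in $x'$; then one checks that $K(x,\cdot)$ and $\widetilde K(x,\cdot)$ have identical inner products against every $e_i$ and against every element of $\ker T_K$, so they coincide in $L^2(\Omega)$ and therefore, both being continuous, everywhere. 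Only after this identification does Dini upgrade the diagonal convergence to uniform convergence, and the kernel Cauchy--Schwarz bound $R_n(x,x')^2\le R_n(x,x)R_n(x',x')$ then yields the uniform Mercer expansion. With that step made explicit, your argument is complete; the RKHS identification via the reproducing property of $K_x=\sum_i\lambda_i e_i(x)e_i$ in $\widetilde{\mH}$ and Moore--Aronszajn uniqueness is exactly right.
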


\begin{definition}
Given  \ref{Mercer-RKHS-characterization}, we further define the nested ladder of RKHS spaces 
\begin{equation*}
    \mH^\gamma := \left\{ f: \Omega \to \R \Bigg| f(x) = \sum_{i=1}^\infty c_i(f) 
     e_i(x) , \quad \sum_{i=1}^\infty \lambda_i^{-\gamma} c_i(f)^2 < + \infty\right\},
\end{equation*}
for $\gamma \ge 1$.
Naturally, larger 
values of $\gamma$ imply more "smoothness", in particular we have 
the inclusion $\mH^{\gamma_2} \subset \mH^{\gamma_1}$ for $1 \le \gamma_1 < \gamma_2$.
\end{definition}

\begin{lemma}\label{lem:H-gamma-bound}
Suppose $f\in \mH^{2\gamma}$ and $f' \in \mH^\gamma$. Then
\begin{align*}
\langle f', f \rangle_{\mH^\gamma} \le 
\| f'\|_{L^2(\Omega)} \| f\|_{\mH^{2\gamma}}.   
\end{align*}
\end{lemma}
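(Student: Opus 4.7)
The plan is to work directly in the Mercer eigenbasis furnished by Proposition \ref{prop:Mercers-thm}. Writing $f = \sum_i c_i(f) e_i$ and $f' = \sum_i c_i(f') e_i$ and using the inner product formula for the nested RKHS (analogous to the one given for $\mH$ in Proposition~\ref{prop:Mercers-thm}, but with $\lambda_i^{-1}$ replaced by $\lambda_i^{-\gamma}$), the inner product on the left-hand side expands as
\[
\langle f', f\rangle_{\mH^\gamma} \;=\; \sum_{i} \lambda_i^{-\gamma}\, c_i(f')\, c_i(f).
\]
The one non-obvious choice is how to split the weight $\lambda_i^{-\gamma}$ across the two factors. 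I would bundle the full weight onto the $f$-coefficient, rewriting the $i$-th summand as $c_i(f') \cdot \bigl(\lambda_i^{-\gamma} c_i(f)\bigr)$. This asymmetric split is designed precisely so that the square of one factor reproduces an $L^2$ norm while the square of the other reproduces an $\mH^{2\gamma}$ norm.

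Next I would apply the Cauchy--Schwarz inequality on $\ell^2$ to this rewritten pairing, yielding
\[
\bigl|\langle f', f\rangle_{\mH^\gamma}\bigr| \;\le\; \Bigl(\sum_i c_i(f')^2\Bigr)^{1/2} \Bigl(\sum_i \lambda_i^{-2\gamma} c_i(f)^2\Bigr)^{1/2}.
\]
The first factor equals $\|f'\|_{L^2(\Omega)}$ by Parseval, since $\{e_i\}$ is orthonormal in $L^2(\Omega)$ by Proposition~\ref{prop:Mercers-thm}; the second factor equals $\|f\|_{\mH^{2\gamma}}$ directly from the definition of the nested space. Chaining these identifications gives exactly the stated bound.

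The only thing that requires a brief justification is well-posedness of the manipulations: the hypothesis $f \in \mH^{2\gamma}$ ensures $\sum \lambda_i^{-2\gamma} c_i(f)^2 < \infty$ is finite, and the hypothesis $f' \in \mH^\gamma$ combined with the nesting inclusion $\mH^\gamma \subset \mH^1 = \mH \subset L^2(\Omega)$ (using that $\Omega$ is bounded and $K$ is continuous) ensures $\|f'\|_{L^2(\Omega)}$ is finite; absolute convergence of the series then lets us swap the order of summation freely. I do not anticipate a genuine obstacle: the lemma is essentially a reindexed Cauchy--Schwarz inequality tailored to trade one order of RKHS smoothness for an $L^2$ norm, at the expected cost of doubling the smoothness index on the other factor.
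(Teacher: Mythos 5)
Your argument is correct: expanding in the Mercer eigenbasis, splitting the weight $\lambda_i^{-\gamma}$ entirely onto the $f$-coefficient, and applying Cauchy--Schwarz together with Parseval is exactly the standard proof of this inequality, and the paper itself states the lemma without proof (importing it from the cited KEqL reference, where the same eigenbasis/Cauchy--Schwarz argument is used). The only cosmetic point is that the sums should be restricted to indices with $\lambda_i \neq 0$, as in the characterization of $\mH$ in Proposition~\ref{prop:Mercers-thm}; this does not affect the argument.
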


\subsection{Sobolev Embedding \& Sampling Inequalities}
\begin{theorem}[Sobolev embedding theorem {\cite[Thm. 4.12]{adams2003sobolev}}]\label{prop:sobolev-embedding}
Suppose $\Omega \subset \R^d$ is a bounded set with Lipschitz boundary and that for 
$\alpha \in \mathbb{N}$ it holds that $\gamma > d/2 + \alpha$. Then $H^\gamma(\Omega)$ 
is continuously embedded in $C^\alpha(\Omega)$ and it holds that 
\begin{align*}
\| u \|_{C^\alpha(\Omega)} \le C_\Omega \| u \|_{H^\gamma(\Omega)}  
\end{align*}
for an embedding constant $C_\Omega \ge 0$ that depends only on $\Omega$.
\end{theorem}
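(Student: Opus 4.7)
The proof will be structured by tackling (a) first, since its estimate feeds directly into the gradient bound of (b). For (a), I would start from the observation that the equality constraint $\wh q(S) = q(S)$ in \ref{one-step-optimal-recovery-theory} forces $(q_N^\star - q)(S_{\mathrm{obs}}) = 0$, so the noiseless Sobolev sampling inequality (Proposition~\ref{prop:sobolev-sampling-inequality}(a)) applies and yields $\|q_N^\star - q\|_{H^{\gamma'}} \lesssim \rho_N^{\gamma-\gamma'}\|q_N^\star - q\|_{H^{\gamma}}$. Combining this with the continuous embedding $\mH_\Gamma \hookrightarrow H^\gamma$, a triangle inequality, and the optimality bound $\|q_N^\star\|_\Gamma^2 \le \|H\|_\Psi^2 + \|q\|_\Gamma^2 + \|p\|_\Sigma^2$ (obtained by comparing the objective at $(q_N^\star,p_N^\star,H_N^\star)$ with its value at the feasible triple $(q,p,H)$) gives the stated squared bound after absorbing the factor of two. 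The estimate for $p$ is identical.

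For (b), the first order of business is to produce an \emph{a priori} bound on $\|H_N^\star\|_\Psi$ that depends only on the true functions and the fill distances. The bare optimality inequality only gives $\|H_N^\star\|_\Psi^2 \le \|H\|_\Psi^2 + (\|q\|_\Gamma^2 - \|q_N^\star\|_\Gamma^2) + (\|p\|_\Sigma^2 - \|p_N^\star\|_\Sigma^2)$, and the two bracketed differences have a priori indeterminate sign. I would control them by introducing the minimum-$\Gamma$-norm interpolant $\bar q_N$ of $q$ at $S_{\mathrm{obs}}$ and exploiting the Pythagorean identity $\|q-\bar q_N\|_\Gamma^2 = \|q\|_\Gamma^2 - \|\bar q_N\|_\Gamma^2$ together with $\|\bar q_N\|_\Gamma \le \|q_N^\star\|_\Gamma$ to obtain $\|q\|_\Gamma^2 - \|q_N^\star\|_\Gamma^2 \le \|q-\bar q_N\|_\Gamma^2$. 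The nested-RKHS lemma~\ref{lem:H-gamma-bound} bounds the right side by $\|q-\bar q_N\|_{L^2}\,\|q\|_{\Gamma^2}$—this is where the hypothesis $q\in\mH_\Gamma^2$ is essential—and a second application of Proposition~\ref{prop:sobolev-sampling-inequality}(a) to $\|q-\bar q_N\|_{L^2}$ produces the small factor $\rho_N^\gamma$. Doing the same for $p$ and using $\rho_N<1$ together with the embeddings $\mH_\Gamma^2 \subset \mH_\Gamma,\, \mH_\Sigma^2 \subset \mH_\Sigma$ yields the uniform estimate $\|H_N^\star\|_\Psi^2 \lesssim \|H\|_\Psi^2 + \|q\|_{\Gamma^2}^2 + \|p\|_{\Sigma^2}^2$.

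With this uniform bound in hand, I would transfer the ODE equality from the collocation constraint to a pointwise gradient comparison: both $\dot y(s) = J\nabla H(y(s))$ and $\dot y_N^\star(s) = J\nabla H_N^\star(y(s))$ hold at every $s\in S_{\mathrm{obs}}\subset T_{\mathrm{col}}$, so $\|(\nabla H - \nabla H_N^\star)(y(s))\|_2 \le \|\dot y - \dot y_N^\star\|_{L^\infty}$. Under the assumption $\gamma',\sigma'>3/2$, the one-dimensional Sobolev embedding Theorem~\ref{prop:sobolev-embedding} yields $H^{\gamma'},H^{\sigma'}\hookrightarrow C^1$, so $\|\dot q-\dot q_N^\star\|_\infty \lesssim \|q-q_N^\star\|_{H^{\gamma'}}$ and analogously for $p$; plugging in the rates from (a) converts these into $\rho_N^{2(\gamma-\gamma')}$ and $\rho_N^{2(\sigma-\sigma')}$ contributions at the discrete image points $\{y(s)\}_{s\in S_{\mathrm{obs}}}$. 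Finally, I would lift to $L^\infty(\Omega)$ via the noisy Sobolev sampling inequality~\ref{prop:sobolev-sampling-inequality}(b) applied to $\nabla H - \nabla H_N^\star \in H^{\eta-1}(\Omega;\R^{2m})$, whose continuous Sobolev norm is bounded by the boundedness of the differential operator $\nabla\colon H^\eta \to H^{\eta-1}$, the embedding $\mH_\Psi \hookrightarrow H^\eta$, and the uniform $\|H_N^\star\|_\Psi$ estimate from Step~1; combining the $\varrho_N^{\eta-1-m}$ factor from that inequality with the discrete bound gives the three-term estimate in the theorem.

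The main obstacle is unquestionably the derivation of the uniform bound on $\|H_N^\star\|_\Psi$: the residual terms $\|q\|_\Gamma^2 - \|q_N^\star\|_\Gamma^2$ obstruct any direct argument based on optimality alone, and the trick of routing through the minimum-norm interpolant $\bar q_N$ together with the nested-RKHS inequality is exactly what makes the whole proof close. Once that estimate is in place, the remaining machinery—Sobolev embedding, sampling inequalities in their noiseless and noisy forms, and triangle inequalities on $\dot y$—is standard and only requires careful bookkeeping of constants and exponents.
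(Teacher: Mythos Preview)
Your proposal does not address the stated theorem at all. The statement you were asked to prove is Theorem~\ref{prop:sobolev-embedding}, the classical Sobolev embedding theorem, which the paper does not prove but simply cites from \cite[Thm.~4.12]{adams2003sobolev} as a background tool in Appendix~\ref{app:theory}. There is no part (a) or (b) in that statement, no optimizers $q_N^\star, p_N^\star, H_N^\star$, and no fill distances $\rho_N,\varrho_N$.

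What you have written is instead a complete and accurate outline of the paper's proof of Theorem~\ref{thm:rates} (the \textit{a priori} error bounds for the 1-step method). As a proof sketch of \emph{that} theorem your proposal is essentially identical to the paper's argument: the sampling-inequality bound for (a), the Pythagorean identity with the minimum-norm interpolant $\bar q_N$ and the nested-RKHS Lemma~\ref{lem:H-gamma-bound} to control $\|H_N^\star\|_\Psi$, the transfer to pointwise gradient values via $\dot y = J\nabla H$ on $S_{\mathrm{obs}}$, the one-dimensional Sobolev embedding into $C^1$, and the noisy sampling inequality on $\Omega$ to lift to $L^\infty$. Even your identification of the uniform bound on $\|H_N^\star\|_\Psi$ as the main obstacle matches the structure of the paper's proof. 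But none of this constitutes a proof of the Sobolev embedding theorem itself, which is an entirely separate (and classical) result about the inclusion $H^\gamma(\Omega)\hookrightarrow C^\alpha(\Omega)$ for $\gamma>d/2+\alpha$.
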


\begin{theorem}[Sobolev sampling inequality {\cite[Prop. 3]{jalalian2025dataefficientkernelmethodslearning}}]\label{prop:sobolev-sampling-inequality}
Suppose $\Omega \subset \R^d$ is a bounded set with Lipschitz 
boundary and consider a set of points
 $X = \{ x_1, \dots, x_N \} \subset \overline{\Omega}$
with 
fill distance  $h_X:= sup_{x \in \Omega} \inf_{x' \in X} \| x - x' \|_2$.
Let $u|_X$ denote the restriction of $u$ to the set $X$.
Further consider  $\gamma > d/2$ and $0 \le \eta \le \gamma$ and 
let $u \in H^\gamma(\Omega)$. 
\begin{enumerate}[label=(\alph*)]
    \item (Noiseless) Suppose $u|_X = 0$. 
Then there exists $h_0>0$ so that whenever $h_X \le h_0$ we have 
\begin{align*}
\| u \|_{H^\eta(\Omega)} \le C_\Omega h_X^{\gamma - \eta} \| u \|_{H^\gamma(\Omega)},  
\end{align*}
where $C_\Omega >0$ is a constant that depends only on $\Omega$.
\item (Noisy) Suppose $u|_X \neq 0$.
Then there exists $h_0>0$ so that whenever $h_X \le h_0$ we have the inequality 
\begin{align*}
\| u \|_{L^\infty(\Omega)} 
\le C_\Omega h_X^{\gamma - d/2} \| u \|_{H^\gamma(\Omega)} 
+ 2 \| u|_X \|_\infty,
\end{align*}
where $C_\Omega >0$ is a constant that depends only on $\Omega$.
\end{enumerate}
\end{theorem}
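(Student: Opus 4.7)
The plan is to reduce both parts to a local Bramble--Hilbert estimate obtained via polynomial reproduction on small patches. Since $\gamma > d/2$, the Sobolev embedding $H^\gamma(\Omega)\hookrightarrow C(\overline\Omega)$ justifies pointwise evaluation at $X$, and one can fix an integer degree $k$ with $k<\gamma\le k+1$ so that $\mathcal P_k\subset H^\gamma$ and the Bramble--Hilbert lemma applies. The main idea is: approximate $u$ on each patch by a polynomial that interpolates $u$ at sufficiently many sample points, control the approximation error by powers of $h_X$ via Bramble--Hilbert, and use the sample-point condition to either kill (part~(a)) or control (part~(b)) the polynomial part.

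\textbf{Localization and polynomial reproduction.}
Using the Lipschitz boundary hypothesis and a Whitney-type covering of $\overline\Omega$, I would first cover $\overline\Omega$ by a finite collection $\{B_i\}$ of star-shaped patches of radius comparable to $h_X$ with uniformly bounded overlap. Choosing $h_0$ small enough so that $h_X\le h_0$, the fill-distance hypothesis forces each patch $B_i$ to contain a subset $X_i\subset X\cap B_i$ that is $\mathcal P_k$-unisolvent, with the degree of unisolvency controlled only by $(k,d,\Omega)$. On each patch, one builds a bounded reconstruction operator $\Pi_i\colon C(B_i)\to\mathcal P_k$ which reproduces $\mathcal P_k$ and satisfies a stability bound
\begin{equation*}
\|\Pi_i v\|_{L^\infty(B_i)} \le C_\Omega\, \|v|_{X_i}\|_{\infty},
\end{equation*}
uniformly in $h_X$. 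A rescaled Bramble--Hilbert lemma on the star-shaped patch $B_i$ then gives, for any $v\in H^\gamma(B_i)$ and any $0\le \eta\le \gamma$,
\begin{equation*}
\|v-\Pi_i v\|_{H^\eta(B_i)} \le C\, h_X^{\gamma-\eta}\, |v|_{H^\gamma(B_i)},
\qquad
\|v-\Pi_i v\|_{L^\infty(B_i)} \le C\, h_X^{\gamma-d/2}\, \|v\|_{H^\gamma(B_i)},
\end{equation*}
the second bound using the scaled Sobolev embedding on $B_i$.

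\textbf{Concluding (a) and (b); main obstacle.}
For part (a), the assumption $u|_X=0$ forces $u|_{X_i}=0$, hence $\Pi_i u=0$ by stability, and therefore $u=u-\Pi_i u$ on each $B_i$. Squaring and summing the local Bramble--Hilbert estimates using bounded overlap yields $\|u\|_{H^\eta(\Omega)}^2\le C\, h_X^{2(\gamma-\eta)}\|u\|_{H^\gamma(\Omega)}^2$, which is (a). For part (b), the stability bound gives $\|\Pi_i u\|_{L^\infty(B_i)}\le C_\Omega\|u|_X\|_\infty$, so from the triangle inequality
\begin{equation*}
\|u\|_{L^\infty(B_i)} \le \|u-\Pi_i u\|_{L^\infty(B_i)} + \|\Pi_i u\|_{L^\infty(B_i)} \le C\, h_X^{\gamma-d/2}\|u\|_{H^\gamma(B_i)} + C_\Omega\|u|_X\|_\infty,
\end{equation*}
and taking the supremum over $i$ gives (b) after absorbing $C_\Omega$ into the factor $2$ (e.g.\ by choosing $h_0$ and the overlap constant so that $C_\Omega\le 2$, which is the standard way this appears in the literature). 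The main obstacle is the construction in the previous paragraph of the stable polynomial reproduction $\Pi_i$ with constants independent of $h_X$; this is the Narcowich--Ward--Wendland zeros lemma, whose delicate ingredients are the uniform $\mathcal P_k$-unisolvency of sufficiently dense point sets and a careful reference-to-physical scaling on patches whose geometry is controlled by the Lipschitz structure of $\partial\Omega$. Once $\Pi_i$ is in hand with the right uniformity, everything else is a standard assembly of local estimates.
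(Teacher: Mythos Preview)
The paper does not prove this statement: it is quoted in the appendix as a background result from \cite[Prop.~3]{jalalian2025dataefficientkernelmethodslearning}, which in turn rests on the classical scattered-data approximation literature (Narcowich--Ward--Wendland, Wendland). So there is no in-paper proof to compare against. Your sketch is the standard route by which these sampling inequalities are established --- localization to star-shaped patches of diameter $\sim h_X$, local polynomial reproduction with uniformly bounded weights, a scaled Bramble--Hilbert estimate, and assembly via bounded overlap --- and is structurally correct.

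One minor correction concerns your justification of the explicit factor $2$ in part~(b). You write that one can ``absorb $C_\Omega$ into the factor $2$ by choosing $h_0$ and the overlap constant so that $C_\Omega\le 2$,'' but the stability constant of a polynomial reconstruction does not shrink as $h_X\to 0$; a rescaling argument shows it is scale-invariant. In the Wendland-style proofs the operator $\Pi_i$ is not Lagrange interpolation at $X_i$ but is obtained from a local polynomial reproduction lemma that directly produces, for each evaluation point $x$, weights $(a_j)$ supported on nearby sample points with $\sum_j |a_j|\le 2$ and $\sum_j a_j\,p(x_j)=p(x)$ for all $p\in\mathcal P_k$. The bound $2$ on the $\ell^1$ norm of the weights is built into that construction from the start and is what appears in the final inequality. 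This is a cosmetic point and does not affect the validity of your overall plan.
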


\section{Generic CGC framework}\label{sec:app_cgc}

As discussed in \ref{subsec:cgc}, for the 1-step method, we implemented a generic code following the computational graph completion (CGC) method introduced in \cite{houman_cgc}.
The process starts by creating an instance of the \texttt{ComputationalGraph} class which can be used to build the full graph using the following functions:
\begin{itemize}
    \item \texttt{add\_observable}: Takes an observable name (like \texttt{"t"} for time) and adds a node for it in the graph. An observable node is considered a root node that doesn't have any incoming edges.
    \item \texttt{add\_unknown\_fn}: Adds a new node that represents the quantity calculated by the function along with a directed edge form the given source node that new quantity node. It also takes a kernel definition along with its parameters values to build the Gaussian process that will estimate the unknown dependency between the source and target nodes. This Gaussian process can be conditioned on any linear transformation of its outputs, in case no value of the target quantity is observed. This feature allows us to condition the Gaussian process for $H$ here on the $\dot{p}$ and $\dot{q}$, which are the results of a linear transformation (the gradient) of $H$.
    \item \texttt{add\_known\_fn}: Adds a new node the represents a quantity calculated by a known function that takes another source node quantity.
    \item \texttt{add\_aggregator}: Aggregates the quantities of two or more nodes into a single new node that can be used as the source of further computation.
    \item \texttt{add\_constraint}: Adds a node that represents a constraint function on a given source node quantity. The output of that constraint function should typically be zero. in an optimized state, hence the value of the constraint functions will be used as a penalty in the optimization process. In the Hamiltonian systems problem here, the constraints function would be the difference between the gradient of $H$ and the values of $\dot{q}$ and $\dot{p}$.
\end{itemize}

Using these methods, we can transform the 1-step computational graph in figure \ref{fig:comp_graph} into the Python code in listing \ref{code:1-d-hamiltonina} for one-dimensional Hamiltonian systems. The completion process itself can be done by calling the \texttt{complete} method of the \texttt{ComputationalGraph} object.  This method takes two Numpy arrays:
\begin{itemize}
    \item \texttt{X}: which contains the observed values of all the quantities  in the graph and some initialization for the non-observed values. The order of the columns of this matrix should follow the observables order given in the initialization of the \texttt{ComputationalGraph} itself.
    \item \texttt{M}: which is a boolean array indicating what values are observed and what values are not.
\end{itemize}

\begin{lstlisting}[language=Python, frame=single, caption=Implementation of one-dimensional Hamiltonian computational graph, label=code:1-d-hamiltonina]
import jax
import cgc
from cgc.utils import KernelParameter as KP

graph = cgc.ComputationalGraph(observables_order=["t", "p", "q", "H"])

graph.add_observable("t")
graph.add_unknown_fn(
    "t", "q", 
    kernel="gaussian", kernel_parameters={"scale": KP(1.0)}
)
graph.add_unknown_fn(
    "t", "p", 
    kernel="gaussian", kernel_parameters="scale": KP(1.0)}
)

graph.add_known_fn("p", "p_dot", cgc.grad)
graph.add_known_fn("q", "q_dot", cgc.grad)
graph.add_known_fn("p_dot", "-p_dot", lambda p_dot: -p_dot)

graph.add_aggregator(["q_dot", "-p_dot"], "qp_dot")

graph.add_aggregator(["p", "q"], "pq")
graph.add_unknown_fn(
    "pq", "H", 
    linear_functional=jax.jacobian, observations="qp_dot", 
    kernel="gaussian", kernel_parameters="scale": KP(1.0)}
)
graph.add_known_fn("H", "grad_H", cgc.grad)

graph.add_aggregator(["q_dot", "grad_H"], "(q_dot, grad_H)")
graph.add_aggregator(["p_dot", "grad_H"], "(p_dot, grad_H)")

def p_dot_constraint(p_dot_grad_H):
    p_dot, grad_H = p_dot_grad_H[:, 0], p_dot_grad_H[:, 1:]
    return p_dot + grad_H[:, 1]

def q_dot_constraint(q_dot_grad_H):
    q_dot, grad_H = q_dot_grad_H[:, 0], q_dot_grad_H[:, 1:]
    return q_dot - grad_H[:, 0]

graph.add_constraint("(p_dot, grad_H)", "W1", p_dot_constraint)
graph.add_constraint("(q_dot, grad_H)", "W2", q_dot_constraint)
\end{lstlisting}

The \texttt{complete} method runs the L-BFGS optimization algorithm on the CGC loss described in \cite{houman_cgc} and returns the \texttt{Z} array which is a copy of the input \texttt{X} array with the non-observed values filled in and completed. This new \texttt{Z} matrix can be used to retrieve any unknown function in the graph and use it for further inference and computations. We use that to retrieve the Hamiltonian function and externally integrate it with an integrator in the experiments reported in section \ref{sec:numerics}.

\begin{remark}
    Since the 2-step method provides closed-form solutions for recovering $q,p,H$, or equivalently, in the 2-step computational graph (figure \ref{fig:comp_graph}), the edges are replaced by closed-form kernel regression solutions, thus there is no need for a \texttt{complete} run. Consequently, no optimization is performed for the 2-step method, and computationally, only the nodes of the graph are considered.
\end{remark}

\section{Supplementary Tables \& Plots}\label{SM: plots}


\begin{figure}[htbp]
    \centering
    \begin{subfigure}{0.36\textwidth}
        \includegraphics[width=\linewidth]{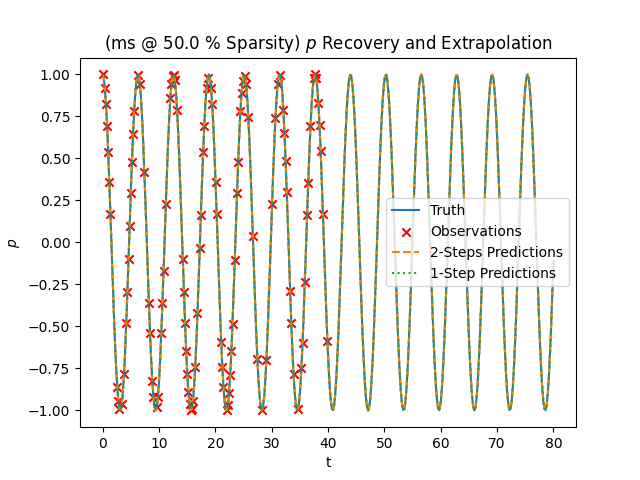}
        \caption*{Gaussian 0.5}
    \end{subfigure}
    \begin{subfigure}{0.36\textwidth}
        \includegraphics[width=\linewidth]{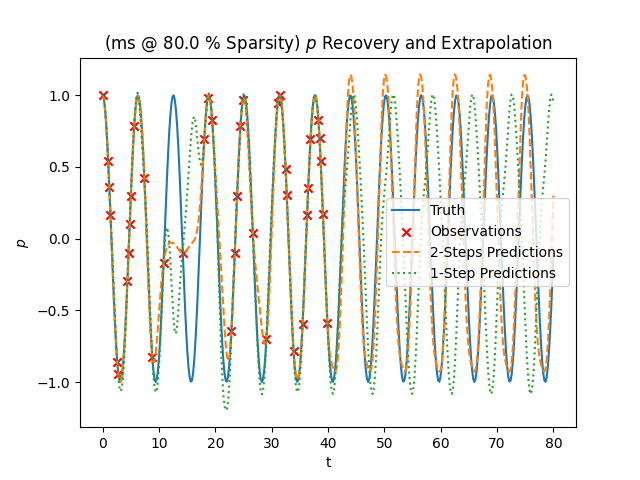}
        \caption*{Gaussian 0.8}
    \end{subfigure}
    \begin{subfigure}{0.36\textwidth}
        \includegraphics[width=\linewidth]{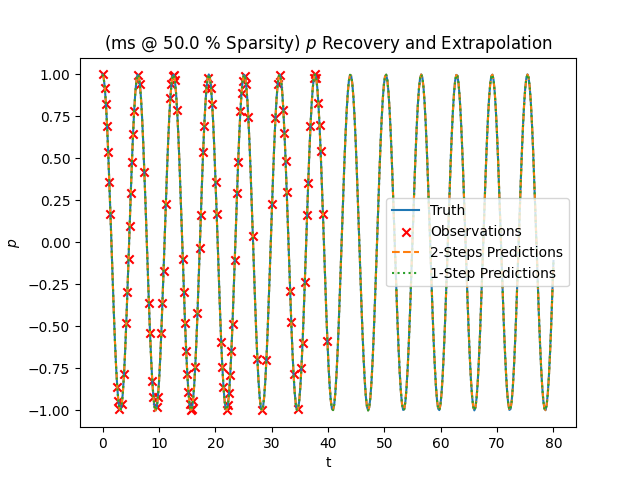}
        \caption*{Poly 0.5}
    \end{subfigure}
    \begin{subfigure}{0.36\textwidth}
        \includegraphics[width=\linewidth]{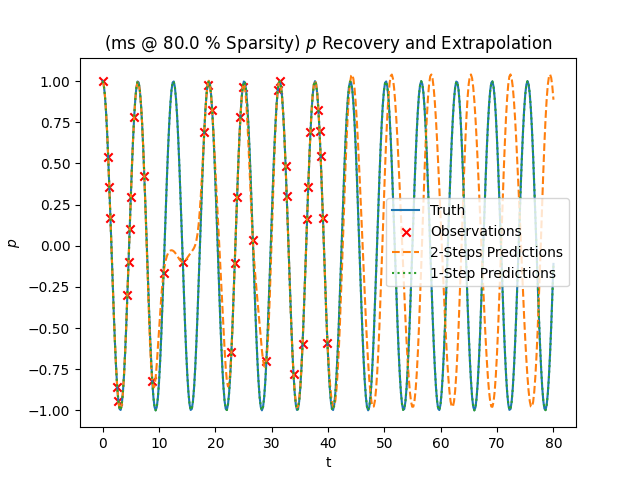}
        \caption*{Poly 0.8}
    \end{subfigure}

    \caption{Recovery of $p$ in the Mass-Spring system.}
    \label{fig:recovery_ms_p}
\end{figure}

\begin{figure}[htbp]
\centering

\begin{subfigure}{0.36\textwidth}
\includegraphics[width=\linewidth]{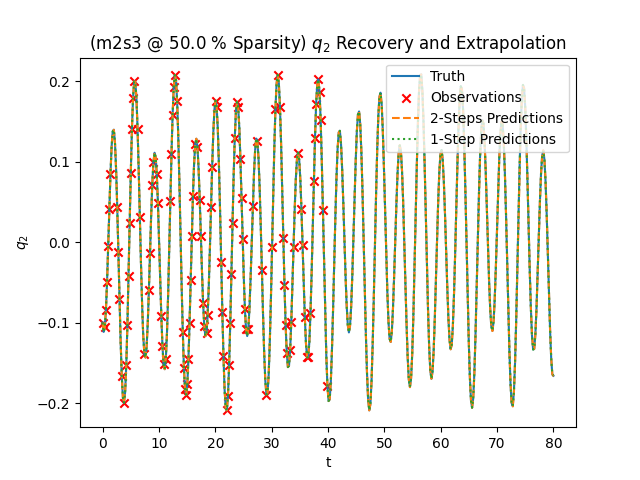}
\caption*{Gaussian 0.5}
\end{subfigure}
\begin{subfigure}{0.36\textwidth}
\includegraphics[width=\linewidth]{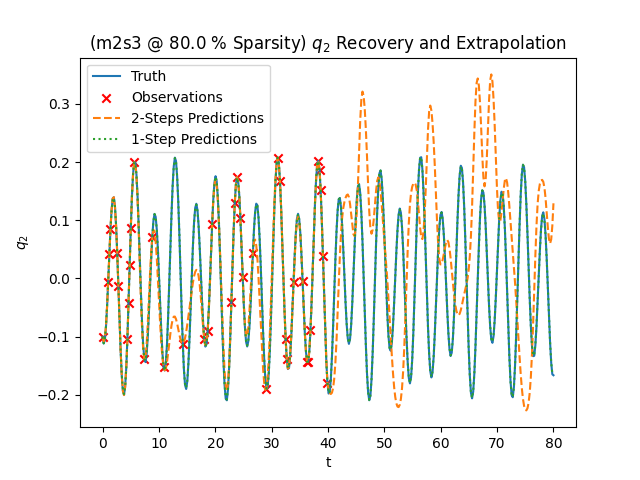}
\caption*{Gaussian 0.8}
\end{subfigure}
\begin{subfigure}{0.36\textwidth}
\includegraphics[width=\linewidth]{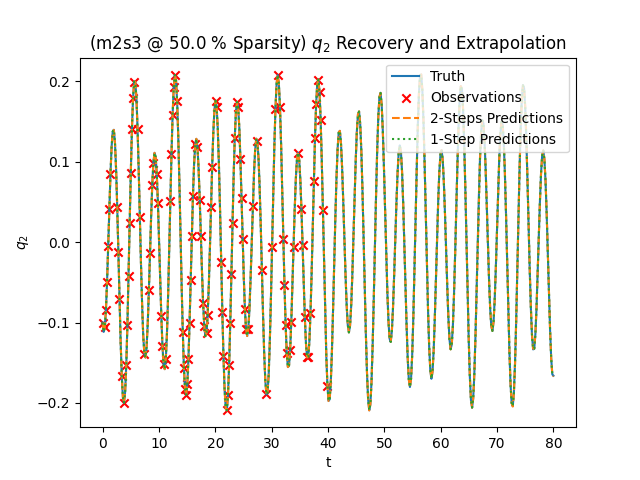}
\caption*{Poly 0.5}
\end{subfigure}
\begin{subfigure}{0.36\textwidth}
\includegraphics[width=\linewidth]{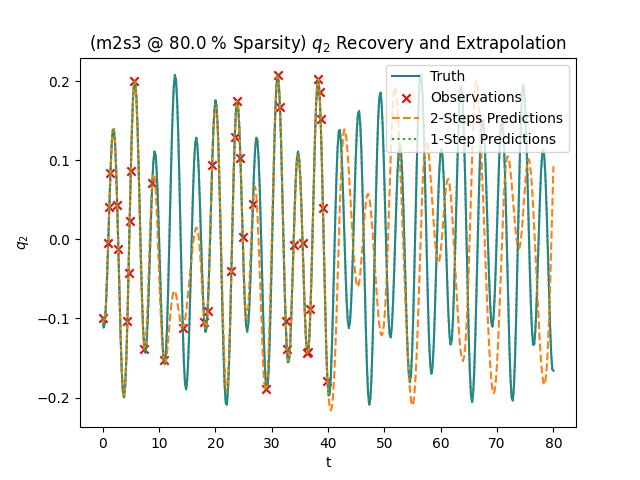}
\caption*{Poly 0.8}
\end{subfigure}
\caption{Recovery of $q_2$ in the Two-Mass-Three-Spring system.}
\end{figure}

\begin{figure}[htbp]
\centering
\begin{subfigure}{0.36\textwidth}
\includegraphics[width=\linewidth]{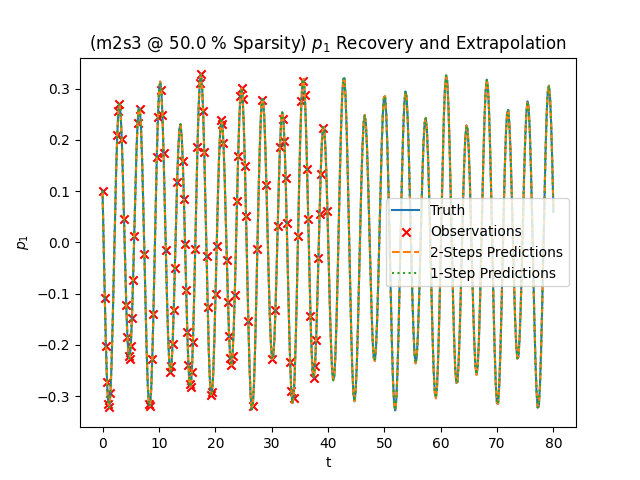}
\caption*{Gaussian 0.5}
\end{subfigure}
\begin{subfigure}{0.36\textwidth}
\includegraphics[width=\linewidth]{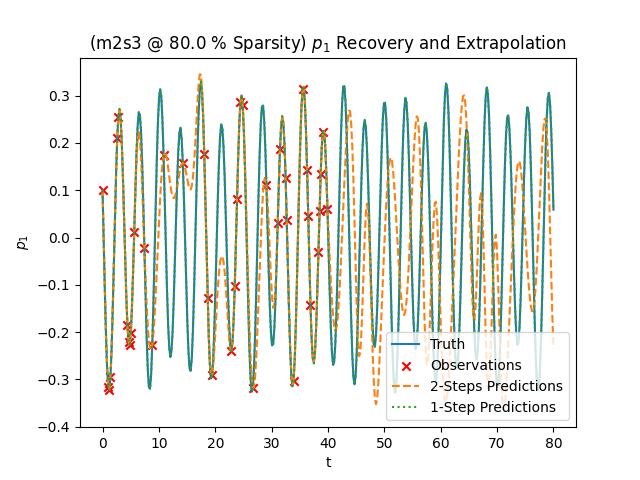}
\caption*{Gaussian 0.8}
\end{subfigure}

\begin{subfigure}{0.36\textwidth}
\includegraphics[width=\linewidth]{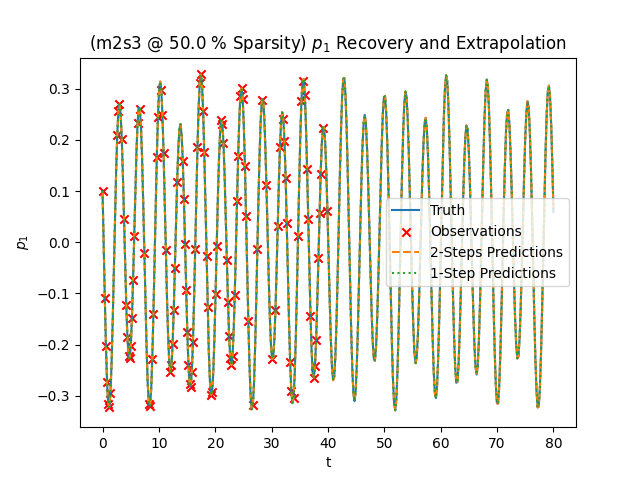}
\caption*{Poly 0.5}
\end{subfigure}
\begin{subfigure}{0.36\textwidth}
\includegraphics[width=\linewidth]{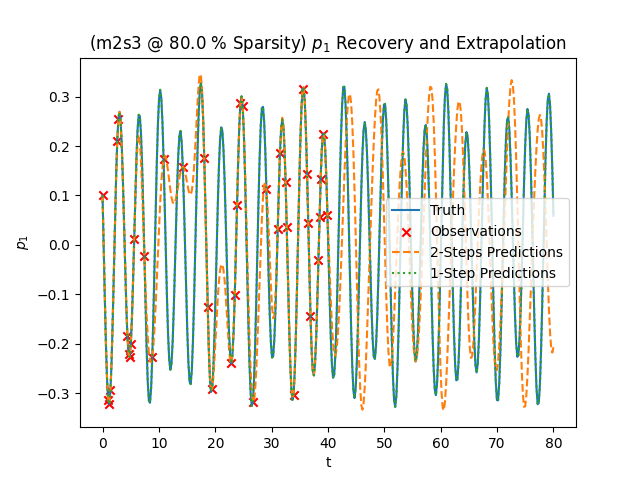}
\caption*{Poly 0.8}
\end{subfigure}
\caption{Recovery of $p_1$ in the Two-Mass-Three-Spring system.}
\end{figure}

\begin{figure}[htbp]
\centering

\begin{subfigure}{0.36\textwidth}
\includegraphics[width=\linewidth]{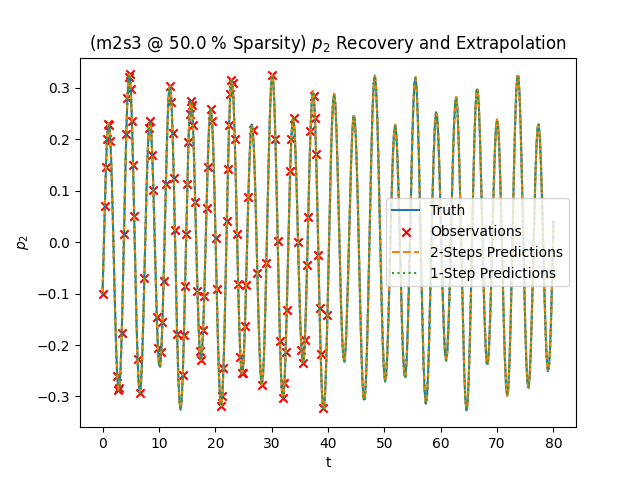}
\caption*{Gaussian 0.5}
\end{subfigure}
\begin{subfigure}{0.36\textwidth}
\includegraphics[width=\linewidth]{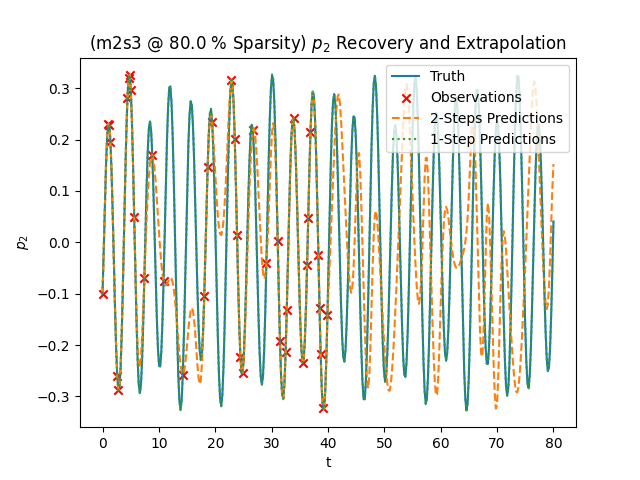}
\caption*{Gaussian 0.8}
\end{subfigure}

\begin{subfigure}{0.36\textwidth}
\includegraphics[width=\linewidth]{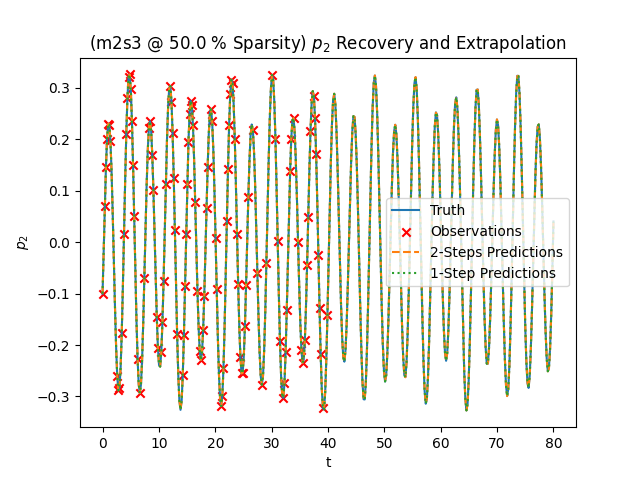}
\caption*{Poly 0.5}
\end{subfigure}
\begin{subfigure}{0.36\textwidth}
\includegraphics[width=\linewidth]{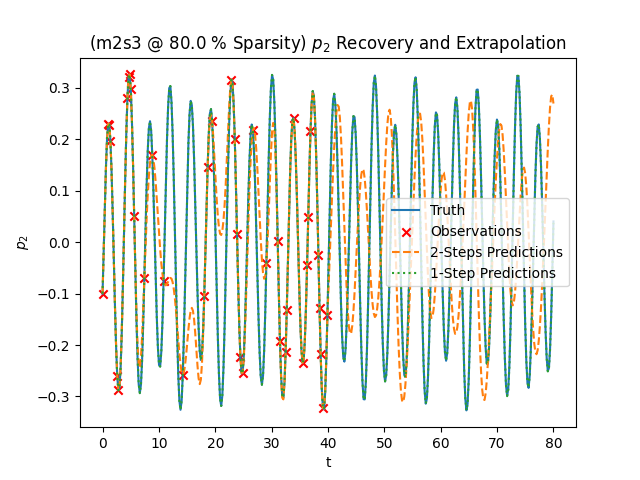}
\caption*{Poly 0.8}
\end{subfigure}

\caption{Recovery of $p_2$ in the Two-Mass-Three-Spring system.}
\label{fig:recovery_m2s3}
\end{figure}

\begin{figure}[htbp]
\centering
\begin{subfigure}{0.36\textwidth}
\includegraphics[width=\linewidth]{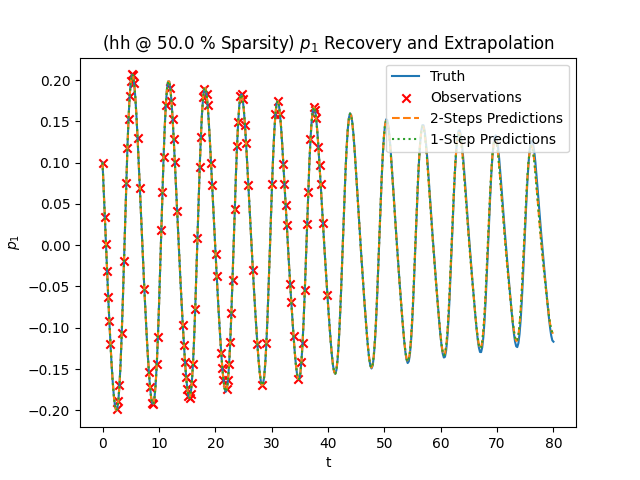}
\caption*{Gaussian 0.5}
\end{subfigure}
\begin{subfigure}{0.36\textwidth}
\includegraphics[width=\linewidth]{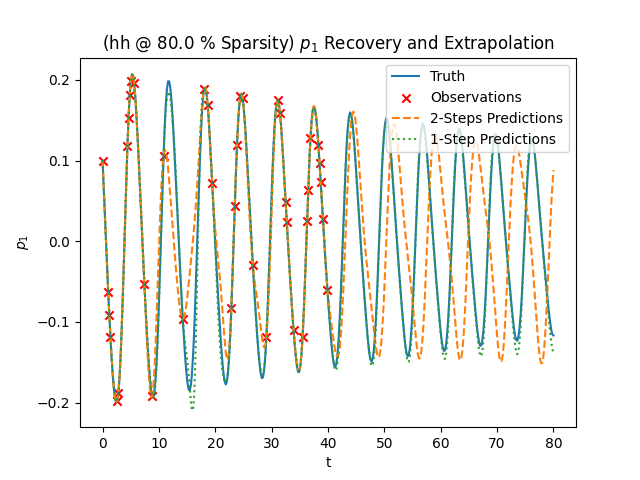}
\caption*{Gaussian 0.8}
\end{subfigure}

\begin{subfigure}{0.36\textwidth}
\includegraphics[width=\linewidth]{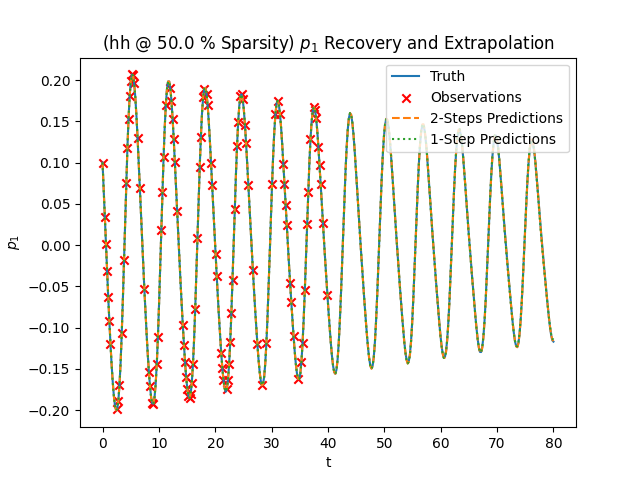}
\caption*{Poly 0.5}
\end{subfigure}
\begin{subfigure}{0.36\textwidth}
\includegraphics[width=\linewidth]{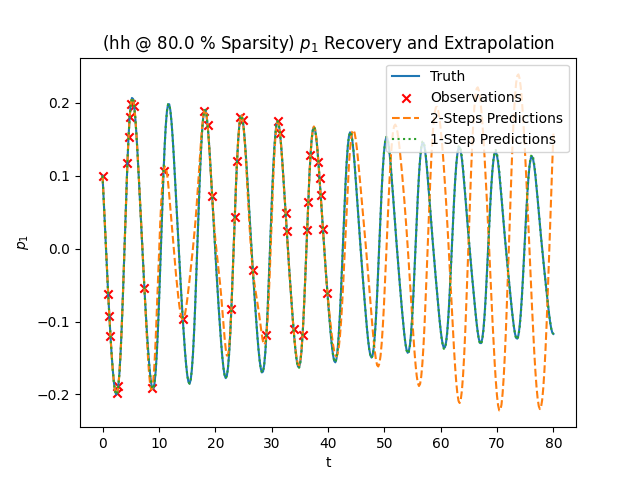}
\caption*{Poly 0.8}
\end{subfigure}
\caption{Recovery of $p_1$ in the Hénon-Heiles system.}
\label{fig:recovery_hh}
\end{figure}

\begin{figure}[htbp]
\centering
\begin{subfigure}{0.36\textwidth}
\includegraphics[width=\linewidth]{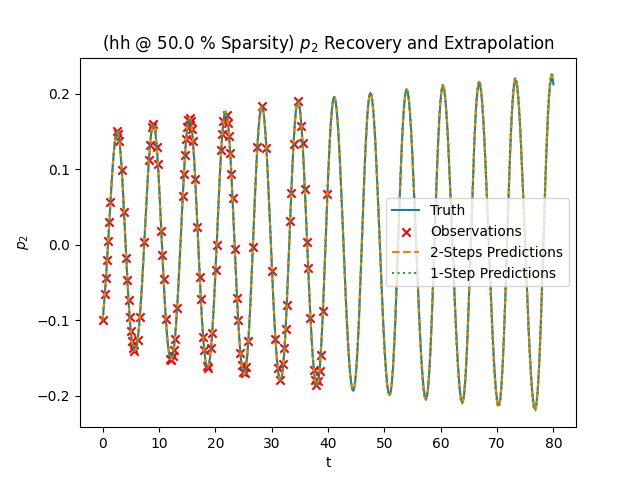}
\caption*{Gaussian 0.5}
\end{subfigure}
\begin{subfigure}{0.36\textwidth}
\includegraphics[width=\linewidth]{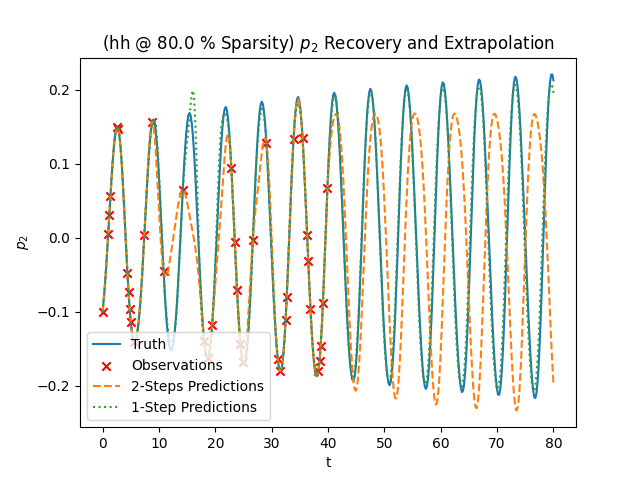}
\caption*{Gaussian 0.8}
\end{subfigure}
\begin{subfigure}{0.36\textwidth}
\includegraphics[width=\linewidth]{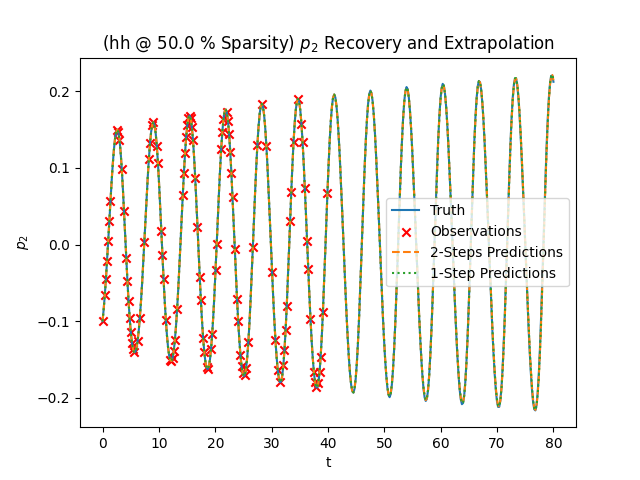}
\caption*{Poly 0.5}
\end{subfigure}
\begin{subfigure}{0.36\textwidth}
\includegraphics[width=\linewidth]{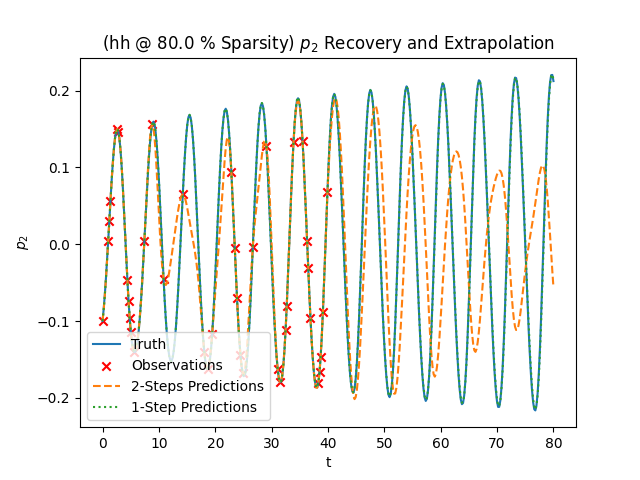}
\caption*{Poly 0.8}
\end{subfigure}
\caption{Recovery of $p_2$ in the Hénon-Heiles system.}
\end{figure}

\begin{table}[ht]
\centering
\setlength{\tabcolsep}{12pt} 
\renewcommand{\arraystretch}{1.2} 
\scriptsize

\begin{tabular}{c} 

\begin{subtable}{\textwidth}
\centering
\begin{tabular}{lcccc}
\toprule
\multirow{2}{*}{\textbf{Sparsity}} & \multicolumn{2}{c}{\textbf{Two-Steps Method}} & \multicolumn{2}{c}{\textbf{One-Step Method}} \\
\cmidrule(lr){2-3}\cmidrule(lr){4-5}
& Interpolation   & Extrapolation   & Interpolation   & Extrapolation   \\
\midrule
0.0 & \textbf{0.000 $\pm$ 0.000} & 0.214 $\pm$ 0.000 & 0.003 $\pm$ 0.000 & \textbf{0.038 $\pm$ 0.000} \\
0.5 & 0.126 $\pm$ 0.132 & 1.466 $\pm$ 2.201 & \textbf{0.019 $\pm$ 0.031} & \textbf{0.138 $\pm$ 0.086} \\
0.6 & 0.666 $\pm$ 0.504 & 8.598 $\pm$ 12.837 & \textbf{0.153 $\pm$ 0.263} & \textbf{2.477 $\pm$ 4.921} \\
0.7 & 8.368 $\pm$ 6.274 & 58.426 $\pm$ 62.178 & \textbf{4.585 $\pm$ 7.798} & \textbf{33.944 $\pm$ 65.119} \\
0.8 & 26.476 $\pm$ 15.010 & 102.456 $\pm$ 43.744 & \textbf{20.859 $\pm$ 19.867} & \textbf{96.970 $\pm$ 59.877} \\
0.9 & 75.576 $\pm$ 15.855 & 134.439 $\pm$ 15.315 & \textbf{53.391 $\pm$ 33.219} & \textbf{88.668 $\pm$ 55.595} \\
\bottomrule
\end{tabular}
\caption*{Gaussian kernel}
\end{subtable} \\[1.5ex]

\begin{subtable}{\textwidth}
\centering
\begin{tabular}{lcccc}
\toprule
\multirow{2}{*}{\textbf{Sparsity}} & \multicolumn{2}{c}{\textbf{Two-Steps Method}} & \multicolumn{2}{c}{\textbf{One-Step Method}} \\
\cmidrule(lr){2-3}\cmidrule(lr){4-5}
& Interpolation   & Extrapolation   & Interpolation   & Extrapolation   \\
\midrule
0.0 & \textbf{0.000 $\pm$ 0.000} & 0.189 $\pm$ 0.000 & 0.003 $\pm$ 0.000 & \textbf{0.061 $\pm$ 0.000} \\
0.5 & 0.126 $\pm$ 0.132 & 1.221 $\pm$ 1.085 & \textbf{0.009 $\pm$ 0.003} & \textbf{0.157 $\pm$ 0.183} \\
0.6 & 0.666 $\pm$ 0.504 & 3.099 $\pm$ 3.629 & \textbf{0.019 $\pm$ 0.013} & \textbf{0.136 $\pm$ 0.095} \\
0.7 & 8.368 $\pm$ 6.274 & 16.802 $\pm$ 17.617 & \textbf{0.037 $\pm$ 0.024} & \textbf{0.264 $\pm$ 0.212} \\
0.8 & 26.476 $\pm$ 15.010 & 76.919 $\pm$ 42.785 & \textbf{0.095 $\pm$ 0.068} & \textbf{0.283 $\pm$ 0.233} \\
0.9 & 75.576 $\pm$ 15.855 & 145.455 $\pm$ 10.461 & \textbf{0.069 $\pm$ 0.022} & \textbf{0.167 $\pm$ 0.081} \\
\bottomrule
\end{tabular}
\caption*{Separable polynomial kernel}
\end{subtable} 

\end{tabular}

\caption{Relative errors (mean $\pm$ std) for $p$ in the Mass-Spring system.}
\label{tab:ms-p}
\end{table}

\begin{table}[ht]
\centering
\setlength{\tabcolsep}{12pt} 
\renewcommand{\arraystretch}{1.2} 
\scriptsize

\begin{tabular}{c} 
\begin{subtable}{\textwidth}
\centering
\begin{tabular}{lcccc}
\toprule
\multirow{2}{*}{\textbf{Sparsity}} & \multicolumn{2}{c}{\textbf{Two-Steps Method}} & \multicolumn{2}{c}{\textbf{One-Step Method}} \\
\cmidrule(lr){2-3}\cmidrule(lr){4-5}
& Interpolation   & Extrapolation   & Interpolation   & Extrapolation   \\
\midrule
0.0 & \textbf{0.000 $\pm$ 0.000} & 0.209 $\pm$ 0.000 & 0.155 $\pm$ 0.000 & \textbf{0.314 $\pm$ 0.000} \\
0.5 & 0.334 $\pm$ 0.349 & 1.469 $\pm$ 1.315 & \textbf{0.060 $\pm$ 0.054} & \textbf{0.242 $\pm$ 0.110} \\
0.6 & 1.553 $\pm$ 1.417 & 5.443 $\pm$ 7.648 & \textbf{0.031 $\pm$ 0.011} & \textbf{0.196 $\pm$ 0.099} \\
0.7 & 16.758 $\pm$ 10.896 & 55.543 $\pm$ 58.337 & \textbf{2.077 $\pm$ 5.072} & \textbf{3.072 $\pm$ 6.465} \\
0.8 & 42.585 $\pm$ 18.315 & 137.835 $\pm$ 50.347 & \textbf{6.504 $\pm$ 8.376} & \textbf{13.412 $\pm$ 18.145} \\
0.9 & 103.994 $\pm$ 12.912 & 154.808 $\pm$ 24.992 & \textbf{48.381 $\pm$ 28.216} & \textbf{74.761 $\pm$ 34.479} \\
\bottomrule
\end{tabular}
\caption*{Gaussian kernel}
\end{subtable} \\[1.5ex] 

\begin{subtable}{\textwidth}
\centering
\begin{tabular}{lcccc}
\toprule
\multirow{2}{*}{\textbf{Sparsity}} & \multicolumn{2}{c}{\textbf{Two-Steps Method}} & \multicolumn{2}{c}{\textbf{One-Step Method}} \\
\cmidrule(lr){2-3}\cmidrule(lr){4-5}
& Interpolation   & Extrapolation   & Interpolation   & Extrapolation   \\
\midrule
0.0 & \textbf{0.000 $\pm$ 0.000} & 0.140 $\pm$ 0.000 & 0.004 $\pm$ 0.000 & \textbf{0.127 $\pm$ 0.000} \\
0.5 & 0.334 $\pm$ 0.349 & 1.549 $\pm$ 1.572 & \textbf{0.017 $\pm$ 0.006} & \textbf{0.183 $\pm$ 0.143} \\
0.6 & 1.553 $\pm$ 1.417 & 5.830 $\pm$ 8.598 & \textbf{0.042 $\pm$ 0.046} & \textbf{0.272 $\pm$ 0.231} \\
0.7 & 16.758 $\pm$ 10.896 & 71.835 $\pm$ 58.325 & \textbf{0.084 $\pm$ 0.051} & \textbf{0.345 $\pm$ 0.195} \\
0.8 & 42.585 $\pm$ 18.315 & 137.481 $\pm$ 37.661 & \textbf{0.145 $\pm$ 0.091} & \textbf{0.399 $\pm$ 0.245} \\
\bottomrule
\end{tabular}
\caption*{Separable polynomial kernel}
\end{subtable}
\end{tabular}

\caption{Relative errors (mean $\pm$ std) for $p$ in the Two-Mass-Three-Spring system.}
\label{tab:m2s3-p}
\end{table}

\begin{table}[ht]
\centering
\setlength{\tabcolsep}{12pt} 
\renewcommand{\arraystretch}{1.2} 
\scriptsize

\begin{tabular}{c} 
\begin{subtable}{\textwidth}
\centering
\begin{tabular}{lcccc}
\toprule
\multirow{2}{*}{\textbf{Sparsity}} & \multicolumn{2}{c}{\textbf{Two-Steps Method}} & \multicolumn{2}{c}{\textbf{One-Step Method}} \\
\cmidrule(lr){2-3}\cmidrule(lr){4-5}
& Interpolation   & Extrapolation   & Interpolation   & Extrapolation   \\
\midrule
0.0 & \textbf{0.000 $\pm$ 0.000} & 2.882 $\pm$ 0.000 & 0.002 $\pm$ 0.000 & \textbf{2.892 $\pm$ 0.000} \\
0.5 & 0.306 $\pm$ 0.277 & 2.764 $\pm$ 0.369 & \textbf{0.030 $\pm$ 0.024} & 3.017 $\pm$ 0.150 \\
0.6 & 0.995 $\pm$ 0.813 & 3.571 $\pm$ 1.597 & \textbf{0.054 $\pm$ 0.033} & \textbf{3.001 $\pm$ 0.328} \\
0.7 & 9.745 $\pm$ 8.983 & 17.634 $\pm$ 15.158 & \textbf{2.255 $\pm$ 5.360} & \textbf{8.057 $\pm$ 9.954} \\
0.8 & 25.900 $\pm$ 13.807 & 65.628 $\pm$ 42.015 & \textbf{6.284 $\pm$ 8.435} & \textbf{21.636 $\pm$ 27.447} \\
0.9 & 77.538 $\pm$ 19.790 & 139.084 $\pm$ 15.678 & \textbf{24.388 $\pm$ 18.010} & \textbf{43.922 $\pm$ 29.839} \\
\bottomrule
\end{tabular}
\caption*{Gaussian kernel}
\end{subtable} \\[1.5ex] 

\begin{subtable}{\textwidth}
\centering
\begin{tabular}{lcccc}
\toprule
\multirow{2}{*}{\textbf{Sparsity}} & \multicolumn{2}{c}{\textbf{Two-Steps Method}} & \multicolumn{2}{c}{\textbf{One-Step Method}} \\
\cmidrule(lr){2-3}\cmidrule(lr){4-5}
& Interpolation   & Extrapolation   & Interpolation   & Extrapolation   \\
\midrule
0.0 & \textbf{0.000 $\pm$ 0.000} & 1.149 $\pm$ 0.000 & 0.002 $\pm$ 0.000 & \textbf{1.149 $\pm$ 0.000} \\
0.5 & 0.306 $\pm$ 0.277 & 1.766 $\pm$ 0.712 & \textbf{0.025 $\pm$ 0.017} & \textbf{1.251 $\pm$ 0.104} \\
0.6 & 0.995 $\pm$ 0.813 & 3.100 $\pm$ 2.452 & \textbf{0.046 $\pm$ 0.041} & \textbf{1.197 $\pm$ 0.054} \\
0.7 & 9.745 $\pm$ 8.983 & 17.621 $\pm$ 18.179 & \textbf{0.061 $\pm$ 0.040} & \textbf{1.355 $\pm$ 0.191} \\
0.8 & 25.900 $\pm$ 13.807 & 76.579 $\pm$ 44.581 & \textbf{0.106 $\pm$ 0.063} & \textbf{1.425 $\pm$ 0.203} \\
0.9 & 77.538 $\pm$ 19.790 & 144.148 $\pm$ 13.160 & \textbf{0.146 $\pm$ 0.094} & \textbf{1.822 $\pm$ 0.503} \\
\bottomrule
\end{tabular}
\caption*{Separable polynomial kernel}
\end{subtable}
\end{tabular}

\caption{Relative errors (mean $\pm$ std) for $p$ in the Hénon-Heiles system.}
\label{tab:hh-p}
\end{table}


\begin{figure}[htbp]
\centering
\begin{subfigure}{\textwidth}
    \centering
    \begin{subfigure}{0.36\textwidth}
        \includegraphics[width=\linewidth]{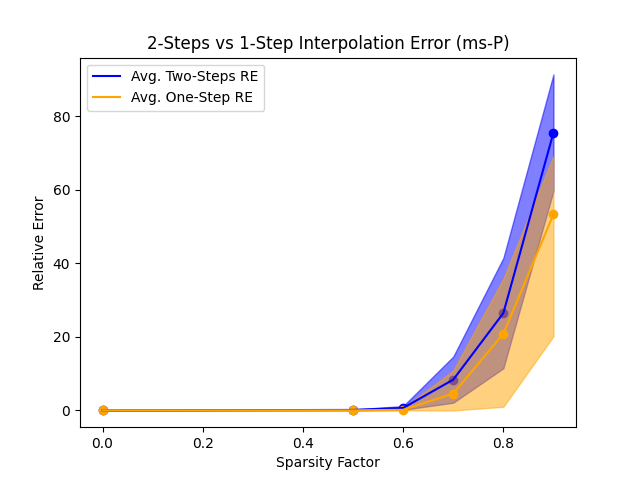}
        \caption*{$p$-error, Gaussian}
    \end{subfigure}
    \begin{subfigure}{0.36\textwidth}
        \includegraphics[width=\linewidth]{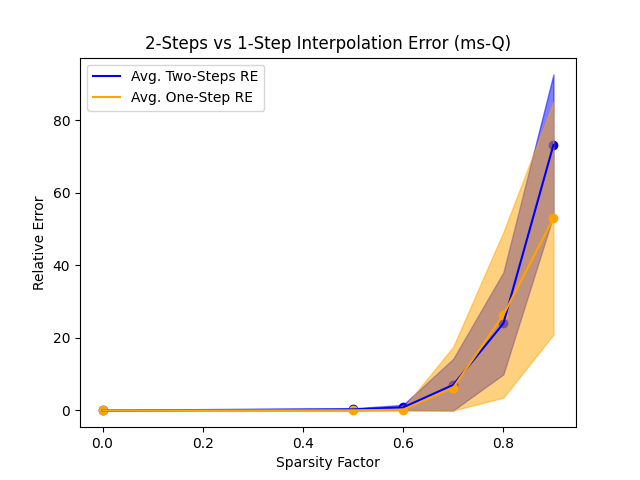}
        \caption*{$q$-error, Gaussian}
    \end{subfigure}
\end{subfigure}

\begin{subfigure}{\textwidth}
    \centering
    \begin{subfigure}{0.36\textwidth}
        \includegraphics[width=\linewidth]{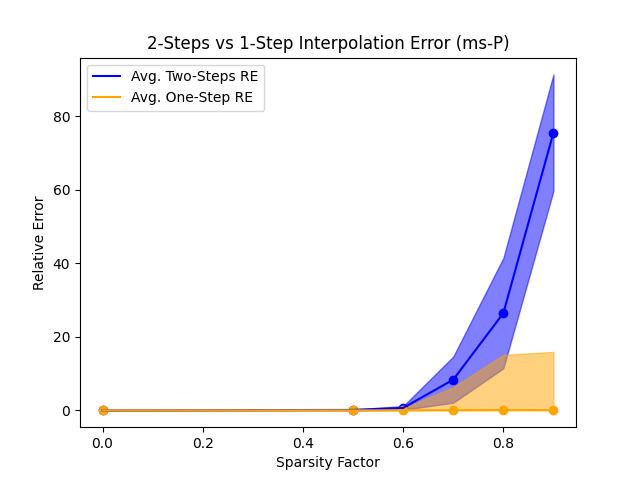}
        \caption*{$p$-error, Separable Polynomial}
    \end{subfigure}
    \begin{subfigure}{0.36\textwidth}
        \includegraphics[width=\linewidth]{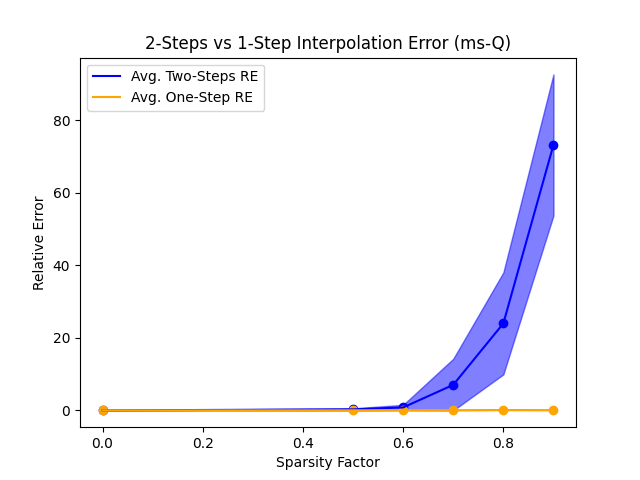}
        \caption*{$q$-error, Separable Polynomial}
    \end{subfigure}
\end{subfigure}

\caption{Interpolation relative errors for the Mass-Spring System.}
\label{fig:error_ms_all}
\end{figure}

\begin{figure}[htbp]
\centering

\begin{subfigure}{\textwidth}
    \centering
    \begin{subfigure}{0.36\textwidth}
        \includegraphics[width=\linewidth]{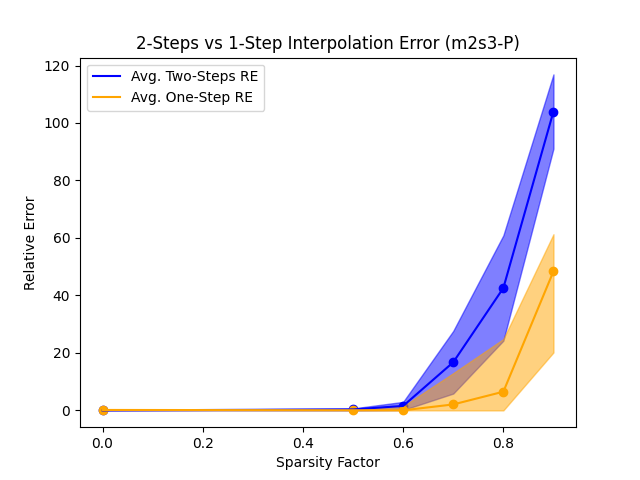}
        \caption*{$p$-error, Gaussian}
    \end{subfigure}
    \begin{subfigure}{0.36\textwidth}
        \includegraphics[width=\linewidth]{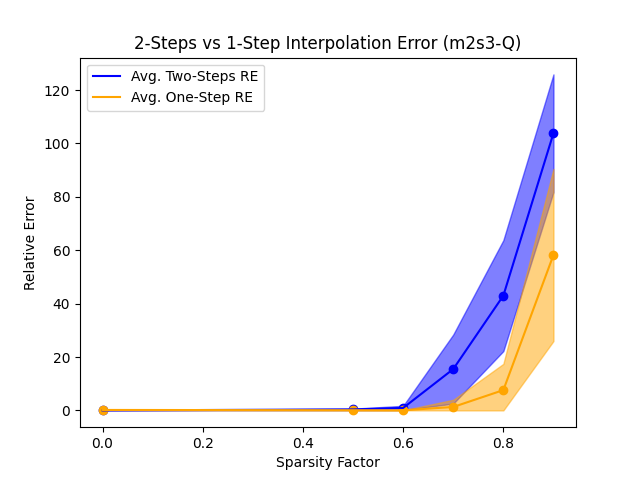}
        \caption*{$q$-error, Gaussian}
    \end{subfigure}
\end{subfigure}

\begin{subfigure}{\textwidth}
    \centering
    \begin{subfigure}{0.36\textwidth}
        \includegraphics[width=\linewidth]{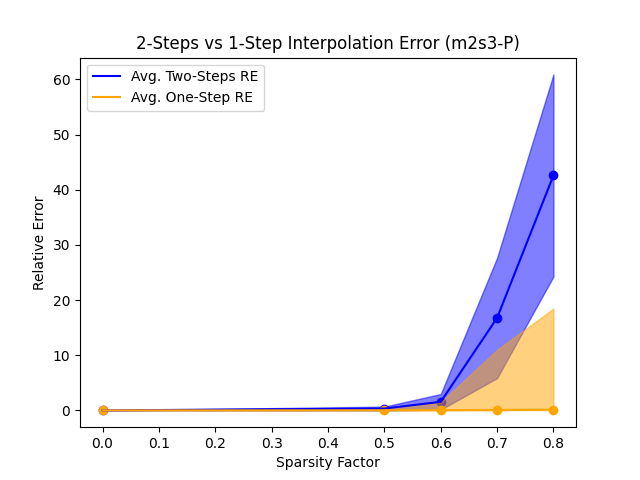}
        \caption*{$p$-error, Separable Polynomial}
    \end{subfigure}
    \begin{subfigure}{0.36\textwidth}
        \includegraphics[width=\linewidth]{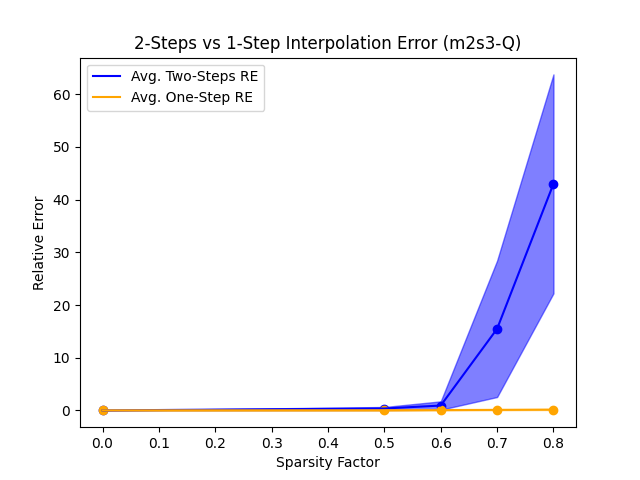}
        \caption*{$q$-error, Separable Polynomial}
    \end{subfigure}
\end{subfigure}

\caption{Interpolation relative errors for the Two-Mass-Three-Spring System.}
\label{fig:error_m2s3_all}
\end{figure}

\begin{figure}[htbp!]
\centering

\begin{subfigure}{\textwidth}
    \centering
    \begin{subfigure}{0.36\textwidth}
        \includegraphics[width=\linewidth]{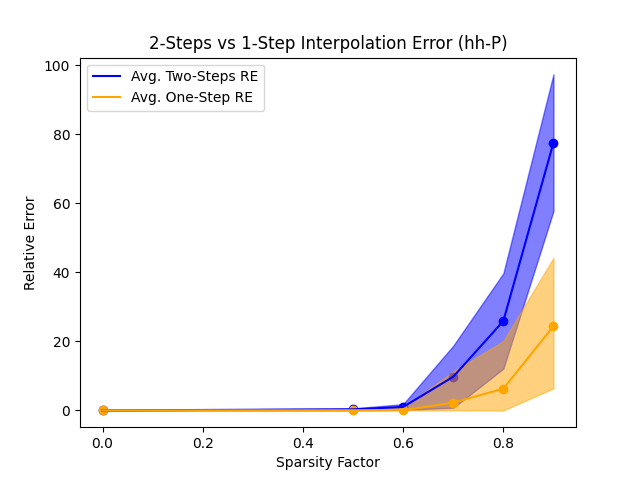}
        \caption*{$p$-error, Gaussian}
    \end{subfigure}
    \begin{subfigure}{0.36\textwidth}
        \includegraphics[width=\linewidth]{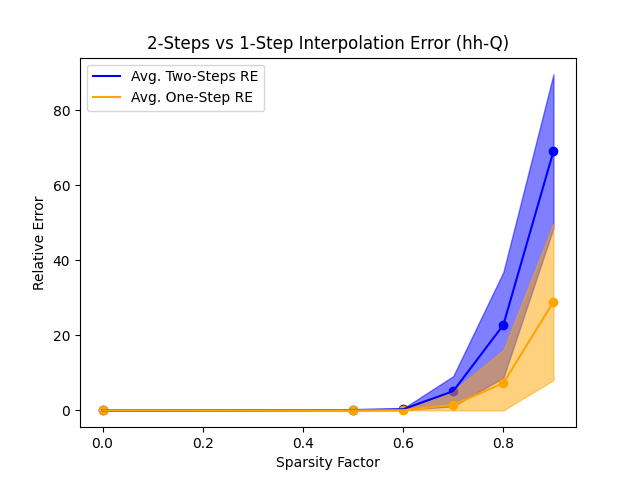}
        \caption*{$q$-error, Gaussian}
    \end{subfigure}
\end{subfigure}

\begin{subfigure}{\textwidth}
    \centering
    \begin{subfigure}{0.36\textwidth}
        \includegraphics[width=\linewidth]{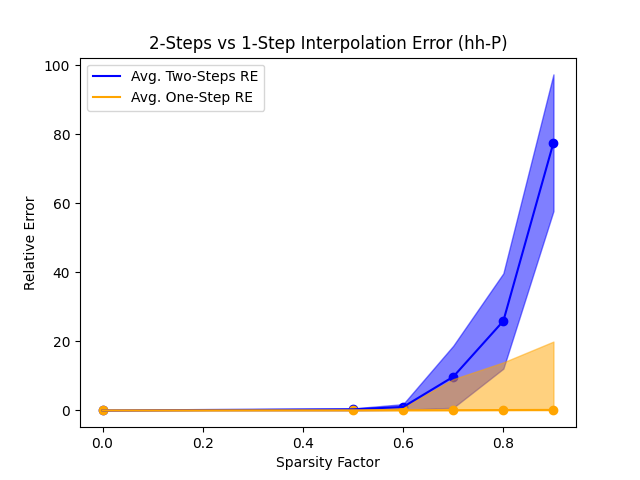}
        \caption*{$p$-error, Separable Polynomial}
    \end{subfigure}
    \begin{subfigure}{0.36\textwidth}
        \includegraphics[width=\linewidth]{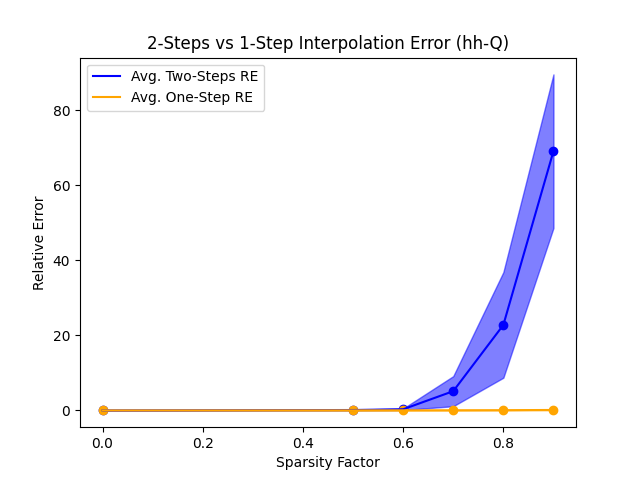}
        \caption*{$q$-error, Separable Polynomial}
    \end{subfigure}
\end{subfigure}

\caption{Interpolation relative errors for the Hénon-Heiles System.}
\label{fig:error_hh_all}
\end{figure}

\begin{figure}[htbp]
\centering

\begin{subfigure}{\textwidth}
    \centering
    \begin{subfigure}{0.36\textwidth}
        \includegraphics[width=\linewidth]{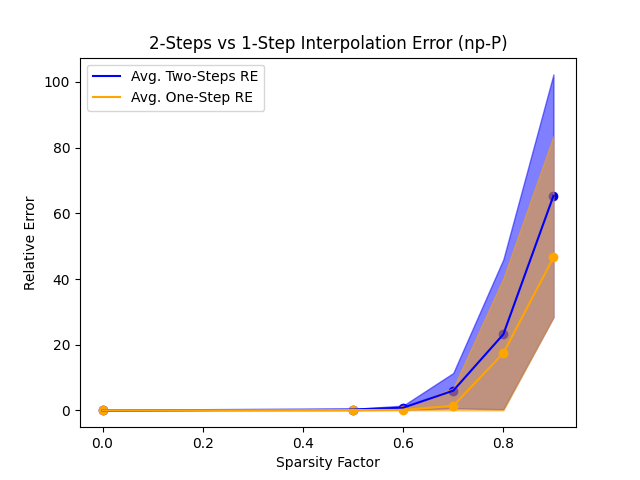}
        \caption*{$p$-error, Gaussian}
    \end{subfigure}
    \begin{subfigure}{0.36\textwidth}
        \includegraphics[width=\linewidth]{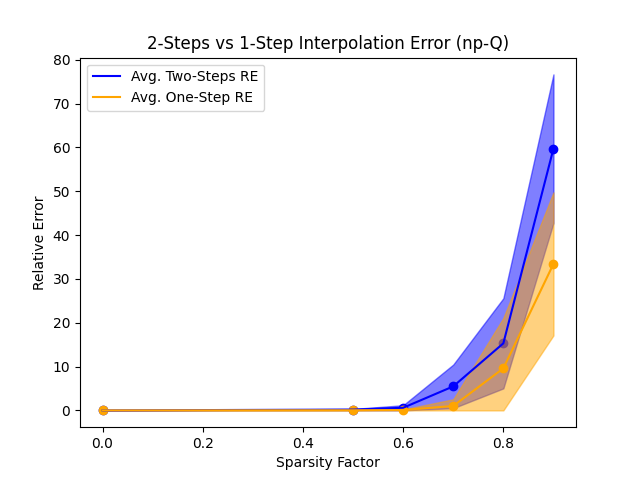}
        \caption*{$q$-error, Gaussian}
    \end{subfigure}
\end{subfigure}

\begin{subfigure}{\textwidth}
    \centering
    \begin{subfigure}{0.36\textwidth}
        \includegraphics[width=\linewidth]{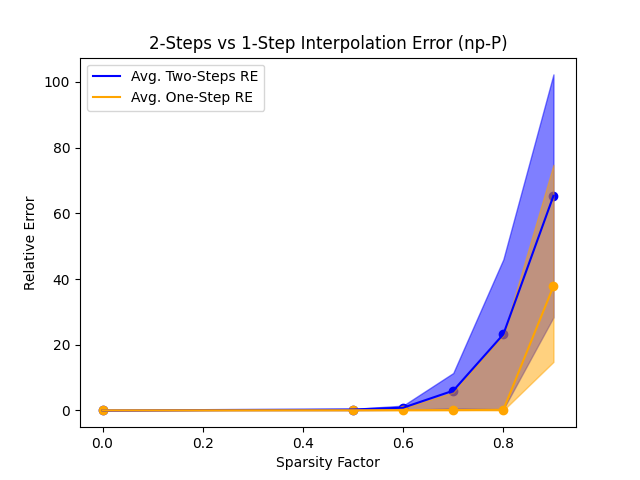}
        \caption*{$p$-error, Separable Polynomial}
    \end{subfigure}
    \begin{subfigure}{0.36\textwidth}
        \includegraphics[width=\linewidth]{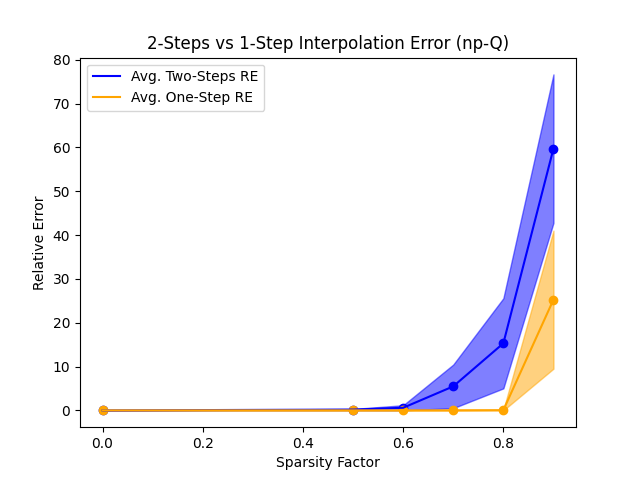}
        \caption*{$q$-error, Separable Polynomial}
    \end{subfigure}
\end{subfigure}

\caption{Interpolation relative errors for the Nonlinear Pendulum System.}
\label{fig:error_np_all}
\end{figure}

\end{document}